\newtheorem{thm}{\bf Theorem}[section]
\newtheorem{defn}[thm]{\bf Definition}
\newtheorem{prop}[thm]{\bf Proposition}
\newtheorem{lem}[thm]{\bf Lemma}
\newtheorem{rem}[thm]{\bf Remark}
\newtheorem{ex}[thm]{\bf Example}
\numberwithin{equation}{section}
\newcommand{\partb}{\beta} 
\newcommand{\sumb}{\Gamma} 
\begin{document}
\title[A new description of the bicrystal $B(\infty)$ and the extended crystal]
{A new description of the bicrystal $B(\infty)$ and the extended crystal}
\author{TAEHYEOK HEO}

\address{(T. Heo) Research Institute of Mathematics, Seoul National University, Seoul 08826, Korea}
\email{gjxogur123@snu.ac.kr}

\keywords{bicrystal, polyhedral realization, sliding diamond rule, extended crystal}
\subjclass[2020]{05E10, 17B10, 17B37}



\thanks{The research of Taehyeok Heo was supported by Basic Science Research Program through the National Research Foundation of Korea (NRF) funded by the Ministry of Education (RS-2023-00241542).}

\begin{abstract}
We define new crystal maps on $B(\infty)$ using its polyhedral realization, 
and show that the crystal $B(\infty)$ equipped with the new crystal maps is isomorphic to Kashiwara's $B(\infty)$ as bicrystals. 
In addition, we combinatorially describe the bicrystal structure of $B(\infty)$, which is called a sliding diamond rule. 
Using the bicrystal structure on $B(\infty)$, we define the extended crystal and show that it is isomorphic to the extended crystal introduced by Kashiwara and Park. 
\end{abstract}

\maketitle
\setcounter{tocdepth}{1} 
\tableofcontents

\section{Introduction}

The crystal base $B(\infty)$ of $U_q^-(\mathfrak{g})$ is introduced by Lusztig \cite{Lus90} for the Lie algebra $\mathfrak{g}$ of type $ADE$ and Kashiwara \cite{Kas91} for a Kac-Moody algebra $\mathfrak{g}$. 
To understand $B(\infty)$ thoroughly, we need to consider its star crystal, which is induced by an anti-involution $\ast$ on $U_q(\mathfrak{g})$ 
as many properties of $B(\infty)$ can be described using the star crystal structure (see, e.g.,\cite{Kas91,Kas93,Kas95}).
For example, for $i \in I$, there exists a strict embedding $\Psi_i : B(\infty) \to B(\infty) \otimes {\bf B}_i$ of crystals, which gives the value $\varepsilon_i^\ast(b)$ for $b \in B(\infty)$ (see Section \ref{subsec:crystal}). 

As combinatorial approaches to understanding $B(\infty)$, many models are introduced such as path models \cite{LZ11, LS19}, 
tableau models \cite{Cliff98,HongLee08,HongLee12}, (modified) Nakajima monomials \cite{KKS07,KS08}, rigged configurations \cite{SS15,SS17,SS18}, PBW bases \cite{Saito94} and so on. 
In this paper, we focus on the polyhedral realization of $B(\infty)$ introduced by Nakashima and Zelvinsky \cite{NZ97}. 
For a given sequence $\iota = (\dots, i_2, i_1)$ of indices, we embed $B(\infty)$ into the $\mathbb{Z}$-lattice $\mathbb{Z}^\infty$ by applying $\Psi_\iota := \dots \circ \Psi_{i_2} \circ \Psi_{i_1}$, 
and a natural question is to describe the image $\mathcal{B}(\infty)$ of $B(\infty)$ under the embedding $\Psi_\iota$. 
It is proved in \cite{NZ97} that $\mathcal{B}(\infty)$ is the set of points in $\mathbb{Z}_+^\infty$ satisfying some linear inequalities (see Theorem \ref{thm:polyhedral realization}), 
which is the reason why we call it the polyhedral realization of $B(\infty)$. 
The defining inequalities are determined by the underlying Lie algebra $\mathfrak{g}$ and the sequence $\iota$, and some results are known. 
When we choose $\iota$ as the fixed one $\iota_0$ (see \eqref{eqn:iota}), the inequalities are given in \cite{NZ97} for $\mathfrak{g}$ of type $A$ or rank $2$, 
\cite{Ho05} for finite types $\mathfrak{g}$, \cite{Ho13} for $\mathfrak{g}$ of types $A_{2n-1}^{(2)}, A_{2n}^{(2)}, B_n^{(1)}, C_n^{(1)}, D_n^{(1)}$, and $D_{n+1}^{(2)}$. 
For other choices of $\iota$, the authors of \cite{KaNa20} introduce the notion of an adapted sequence and give the defining inequalities for an adapted sequence $\iota$. 

The crystal structure of $\mathcal{B}(\infty)$ is well-known (cf. \cite{NZ97}). 
In particular, we compute the associated crystal maps by evaluating special linear functions $\partb_{u,v}$ at $b \in \mathcal{B}(\infty)$, 
and by comparing the values $\sumb_{(s)}(b)$ (denoted by $\sigma_{s,t}(b)$ in \cite{NZ97}) which is a sum of some $\partb_{u,v}(b)$'s. 
In contrast, one can find a few results on the star crystal structure of $\mathcal{B}(\infty)$; for example, for $b \in \mathcal{B}(\infty)$, 
the crystal operators $\widetilde{x}_{i_1}^\ast(b)$ ($x = e$ or $f$) \cite[Theorem 2.2.1]{Kas93} and the function $\varepsilon_i^\ast(b)$ \cite[Theorem 4.2]{Nas99}. \vspace{1em}

The purpose of the paper is to describe the star crystal structure of $\mathcal{B}(\infty)$ (with respect to $\iota_0$) when $\mathfrak{g}$ is of type $A_n, B_n, D_n$. 
To achieve the goal, we introduce tableaux $T_i^\ast$ (Definition \ref{def:partition set}) for $i \in I$ whose cells are filled with 
linear functions $\partb_{u,v}^\ast$ which play a role of $\partb_{u,v}$ for the star crystal. 
As the usual crystal does, we calculate the value $\sumb_{\lambda}^\ast(b)$ as a sum of some $\partb_{u,v}^\ast(b)$'s, for $\lambda \in \Pi_i^\ast$. 
In fact, each $\lambda \in \Pi_i^\ast$ designates which $\partb_{u,v}^\ast(b)$ is in the summand of $\sumb_\lambda^\ast(b)$ 
and it is done by reading entries contained in the (unique) sub-tableau of $T_i^\ast$ of shape $\lambda$. 
Now, we can define new crystal operators $\widetilde{x}_i^\ast$ ($x = e$ or $f$) and maps $\varepsilon_i^\ast, \varphi_i^\ast$ on $b \in \mathcal{B}(\infty)$ 
by comparing the values $\sumb_\lambda^\ast(b)$ for $\lambda \in \Pi_i^\ast$ (Definition \ref{def:crystal star}). 
We remark that we similarly introduce tableaux $T_i$ and define crystal operators $\widetilde{x}_i$ and functions $\varepsilon_i, \varphi_i$ on $b \in \mathcal{B}(\infty)$ 
by observing values $\sumb_{(s)}(b)$ for $(s) \in \Pi_i$ (Definition \ref{def:crystal usual}). 
We can directly check that these crystal maps are exactly the same as those in \cite{NZ97}. 

The main result of the paper is to show that the crystal $\mathcal{B}(\infty)$ endowed with the (star) crystal structure we define is isomorphic to $B(\infty)$ of Kashiwara as bicrystals (Theorem \ref{thm:bicrystal}). 
Here, a bicrystal means a crystal with additional crystal structure such that their weight functions coincide (cf. \cite[Definition 2.5]{CT15}). 
In fact, $B(\infty)$ is characterized by its two crystal structures. 
To be precise, it is known that if a crystal $B$ equipped with another crystal structure satisfies some conditions (see Proposition \ref{prop:bicrystal}), 
then $B$ is isomorphic to $B(\infty)$ as bicrystals and the additional crystal structure of $B$ coincides with the star crystal structure of $B(\infty)$. 
Thus, to prove Theorem \ref{thm:bicrystal}, it is sufficient to show that our bicrystal maps satisfy all conditions in Proposition \ref{prop:bicrystal}.
From now on, $\mathcal{B}(\infty)$ is considered to be the bicrystal.

Meanwhile, in order to calculate $\partb_{s,t}(b)$ or $\partb_{s,t}^\ast(b)$ for $b \in \mathcal{B}(\infty)$ easily, we introduce a combinatorial rule, which we call a sliding diamond rule (Definition \ref{def:SD}). 
We put coordinates of $b$ in a certain configuration, and find diamonds $\Diamond_{s,t}$ and $\Diamond_{s,t}^\ast$ in the configuration. 
Then we obtain $\partb_{s,t}(b)$ and $\partb_{s,t}^\ast(b)$ as a linear combination of integers the corresponding diamond contains. 
In particular, when we compute associated crystal maps on $\mathcal{B}(\infty)$, it is enough to consider diamonds contained in a specific line of the configuration. \vspace{1em}

Using the description of the bicrystal structure on $\mathcal{B}(\infty)$, we combinatorially describe the extended crystal $\widehat{B}(\infty)$ introduced by Kashiwara and Park \cite{KP22}. 
For a Kac-Moody algebra $\mathfrak{g}$, the extended crystal $\widehat{B}(\infty) (= \widehat{B}_\mathfrak{g}(\infty))$ is the set
\[ \widehat{B}(\infty) = \left\{ {\bf b} = (b^{(k)})_{k \in \mathbb{Z}} \in \prod_{k \in \mathbb{Z}} B(\infty) \,:\, b^{(k)} = {\bf 1} \mbox{ for all but finitely many }k \right\} \]
equipped with extended crystal operators $\widetilde{E}_{(i,k)}, \widetilde{F}_{(i,k)}$ and functions $\widehat{\rm wt}, \widehat{\varepsilon}_{(i,k)}$ for $(i, k) \in I \times \mathbb{Z}$, 
where ${\bf 1}$ is the unique highest weight vector of $B(\infty)$ (see Definition \ref{def:extended crystal}). 
By definition, the extended crystal maps are completely determined by the bicrystal maps acting on each $b^{(k)} \in B(\infty)$, 
and hence we apply our results on $\mathcal{B}(\infty)$ to the extended crystal for type $A_n, B_n$, and $D_n$, which results in a combinatorial description of $\widehat{B}(\infty)$ (Theorem \ref{thm:extended crystal iso}). 
In addition, we obtain the extended sliding diamond rule when $\mathfrak{g}$ is of type $A_n$ (Definition \ref{def:extended SD}). 

As an application, we use the combinatorial description on $\widehat{B}(\infty)$ in order to understand the representation theory of quantum affine Lie algebras. 
Let $\mathcal{C}_\mathfrak{g}^0$ be a Hernandez-Leclerc category (cf. \cite{HL10} and \cite[Section 2.2]{KP22}) over a quantum affine algebra $U_q'(\mathfrak{g})$, 
and let $\mathscr{B}(\mathfrak{g})$ be the set of isomorphism classes of simple modules in $\mathcal{C}_\mathfrak{g}^0$. 
It is proved in \cite{KP22} that $\mathscr{B}_\mathcal{D}(\mathfrak{g}) = \mathscr{B}(\mathfrak{g})$ has the categorical crystal structure 
depending on a complete duality datum $\mathcal{D}$ (see \cite{KKOP24} or Section \ref{subsec:categorical crystal review}) and 
it is isomorphic to the extended crystal $\widehat{B}_{\mathfrak{g}_{\rm fin}}(\infty)$ with the isomorphism (see \eqref{eqn:extended category crystal iso})
\[ \Phi_\mathcal{D} : \widehat{B}_{\mathfrak{g}_{\rm fin}}(\infty) \to \mathscr{B}_\mathcal{D}(\mathfrak{g}), \] 
where $\mathfrak{g}_{\rm fin}$ is the finite simple Lie algebra associated with $\mathfrak{g}$ (see \cite[Table 1]{KP22}). 
As we can explain the extended crystal $\widehat{B}_{\mathfrak{g}_{\rm fin}}(\infty)$ from the sliding diamond rule, 
we apply the combinatorial description using the sliding diamond rule to the categorical crystal $\mathscr{B}_\mathcal{D}(\mathfrak{g})$, 
which gives a combinatorial way to understand the representation theory of quantum affine Lie algebras. \vspace{1em}

This paper is organized as follows. We review the notion of crystals and the extended crystal in Section \ref{sec:prelim}. 
In Section \ref{sec:bicrystal}, we explicitly define a new crystal structure on $\mathcal{B}(\infty)$ based on the polyhedral realization, 
and explain the combinatorial model to describe $\mathcal{B}(\infty)$. 
As a result, we show that the bicrystal $\mathcal{B}(\infty)$ is isomorphic to the bicrystal $B(\infty)$ of Kashiwara, 
which is proved in Section \ref{sec:bicrystal proof} (with Appendix). 
In Section \ref{sec:extended}, we introduce the extended crystal analogue $\widehat{\mathcal{B}}(\infty)$ of $\mathcal{B}(\infty)$, 
and show that $\widehat{\mathcal{B}}(\infty)$ is a new combinatorial realization of the extended crystal. 
As an application to representation theory of quantum affine algebras, we give examples for the action of the extended crystal operators 
on the categorical crystal in Section \ref{sec:categorical crystal}. 

{\bf Acknowledgement.} The author would like to appreciate Euiyong Park for valuable discussions and kind comments on the draft. 

\section{Preliminaries} \label{sec:prelim}
\subsection{Crystals} \label{subsec:crystal}
Let $\mathbb{Z}_+$ be the set of nonnegative integers. 
A partition $\lambda = (\lambda_1, \lambda_2, \dots) = (\lambda_i)_{i \geq 1}$ is a sequence of weakly decreasing nonnegative integers. 
The length $\ell(\lambda)$ of a partition $\lambda$ is the number of nonzero entries of $\lambda$. 
In particular, a partition is said to be strict if its nonzero entries are all distinct. 
We usually illustrate a partition as a Young diagram in English convention, which is a collection of cells arranged in left-justified rows (cf. \cite{Fulton}). 
For partitions $\lambda$ and $\mu$, the union $\lambda \cup \mu$ of $\lambda$ and $\mu$ is the partition $(\max\{\lambda_i, \mu_i\})_{i \geq 1}$, 
and the intersection $\lambda \cap \mu$ of $\lambda$ and $\mu$ is the partition $(\min\{\lambda_i, \mu_i\})_{i \geq 1}$. 
The $(i, j)$-th cell of a Young diagram means the $j$-th cell from the left in the $i$-th row from the top. 
In particular, we write $(i, j) \in \lambda$ if a partition $\lambda$ contains $(i, j)$-th cell, or equivalently $1 \leq i \leq \ell(\lambda)$ and $1 \leq j \leq \lambda_i$. 
For a partition $\lambda$, a tableau $T$ of shape $\lambda$ is a filling of cells of the partition $\lambda$. 
In particular, we denote by $T(i, j)$ the entry occupying the $(i, j)$-th cell of $T$ for $(i, j) \in \lambda$.

Let $U_q(\mathfrak{g})$ be the quantum group associated with a symmetrizable Kac-Moody algebra $\mathfrak{g}$, 
in other words, the associative $\mathbb{Q}(q)$-algebra generated by $e_i, f_i$ ($i \in I$), and $q^h$ ($h \in P^\ast$) with some defining relations (cf. \cite{Kas91}). 
Here, we denote by $I$ the index set, by $P$ the weight lattice of $\mathfrak{g}$, and let $\{ \alpha_i \,|\, i \in I \}$ be the set of simple roots.
Let $\ast$ be the $\mathbb{Q}(q)$-linear anti-automorphism on $U_q(\mathfrak{g})$ defined by
\[ e_i^\ast = e_i, \quad f_i^\ast = f_i, \quad \mbox{and}\quad (q^h)^\ast = q^{-h}. \]

\begin{defn} \label{def:crystal def}
    A ($\mathfrak{g}$-)crystal is the set $B$ equipped with the maps ${\rm wt} : B \to P$, $\widetilde{e}_i, \widetilde{f}_i : B \to B \cup \{ {\bf 0} \}$, 
    $\varepsilon_i, \varphi_i : B \to \mathbb{Z} \cup \{ -\infty \}$ ($i \in I$) satisfying the following conditions. For $i \in I$ and $b, b' \in B$,
    \begin{enumerate}
        \item $\varphi_i(b) = \varepsilon_i(b) + \langle h_i, {\rm wt}(b) \rangle$,
        \item Whenever $\widetilde{e}_i(b) \neq {\bf 0}$, we have
        \[ {\rm wt}(\widetilde{e}_ib) = {\rm wt}(b) + \alpha_i, \quad \varepsilon_i(\widetilde{e}_ib) = \varepsilon_i(b) -1, \quad \varphi_i(\widetilde{e}_ib) = \varphi_i(b) +1, \]
        \item Whenever $\widetilde{f}_i(b) \neq {\bf 0}$, we have
        \[ {\rm wt}(\widetilde{f}_ib) = {\rm wt}(b) - \alpha_i, \quad \varepsilon_i(\widetilde{f}_ib) = \varepsilon_i(b) +1, \quad \varphi_i(\widetilde{f}_ib) = \varphi_i(b) -1, \]
        \item $\widetilde{f}_ib = b'$ if and only if $b = \widetilde{e}_ib'$,
        \item If $\varepsilon_i(b) = -\infty$, then $\widetilde{e}_ib = \widetilde{f}_ib = {\bf 0}$.
    \end{enumerate}
    Here, ${\bf 0}$ is the formal symbol and $-\infty$ is another formal symbol such that $-\infty < n$ and $(-\infty) + n = -\infty$ for any $n \in \mathbb{Z}$ with $(-\infty)+(-\infty) = -\infty$. 
\end{defn}
As an example of crystals, we introduce the elementary crystal ${\bf B}_i$ ($i \in I$), which is the set
\[ {\bf B}_i = \{ (b)_i \,|\, b \in \mathbb{Z} \} \]
where ${\rm wt}((b)_i) = b\alpha_i$, $\widetilde{e}_i((b)_i) = (b+1)_i, \widetilde{f}_i((b)_i) = (b-1)_i$, 
$\varepsilon_i((b)_i) = -b, \varphi_i((b)_i) = b$, $\widetilde{e}_j((b)_i) = \widetilde{f}_j((b)_i) = {\bf 0}$ for $j \neq i$.

Our main object is the crystal $B(\infty)$ corresponding to the negative half $U_q^-(\mathfrak{g})$ of the quantum group, 
which is the subalgebra of $U_q(\mathfrak{g})$ generated by $f_i$'s ($i \in I$). 
When we want to indicate the associated Lie algebra $\mathfrak{g}$ explicitly, we write it as a subscript, e.g., such as $B_\mathfrak{g}(\infty)$. 
We summarize some properties of $B(\infty)$ with respect to the involution $\ast$. 
Kashiwara \cite{Kas93} shows that the crystal $B(\infty)$ is invariant under $\ast$, that is, $B(\infty)^\ast = B(\infty)$. 
It yields another crystal structure of $B(\infty)$ (cf. \cite[Section 8.3]{Kas95}) endowed with the maps ${\rm wt}, \widetilde{e}_i^\ast, \widetilde{f}_i^\ast, \varepsilon_i^\ast, \varphi_i^\ast$ on $B(\infty)$, where
\begin{eqnarray*}
    \widetilde{e}_i^\ast(b) &:=& (\widetilde{e}_i (b^\ast))^\ast, \\
    \widetilde{f}_i^\ast(b) &:=& (\widetilde{f}_i (b^\ast))^\ast, \\
    \varepsilon_i^\ast(b) &:=& \varepsilon_i(b^\ast), \\
    \varphi_i^\ast(b) &:=& \varphi_i(b^\ast)
\end{eqnarray*}
for $i \in I$ and $b \in B(\infty)$. 
We denote by ${\bf 1} \in B(\infty)$ the unique highest weight element of $B(\infty)$ (with weight $0$).

On the other hand, for $i \in I$, there exists the unique embedding of crystals
\[ \Psi_i : B(\infty) \hookrightarrow B(\infty) \otimes {\bf B}_i \]
sending ${\bf 1}$ to ${\bf 1} \otimes (0)_i$ (see \cite[Theorem 2.2.1]{Kas93}). 
In particular, we have $\Psi_i(b) = \widetilde{e}_i^{\ast\max} (b) \otimes (-\varepsilon_i^\ast(b))_i$, 
where $\widetilde{e}_i^{\ast\max}(b) = (\widetilde{e}_i^\ast)^m(b)$ with $m = \varepsilon_i^\ast(b)$, 
and the image of $\Psi_i$ is
\[ \{\, b \otimes (-m)_i \,|\, \varepsilon_i^\ast(b) = 0, m \in \mathbb{Z}_+ \}. \]

\subsection{The polyhedral realization}
We assume that a sequence $\iota = (\dots, i_3, i_2, i_1)$ of indices satisfies the following conditions:
\[ \mbox{$i_k \neq i_{k+1}$ for all $k \geq 1$}, \qquad \mbox{\#$\{ k \geq 1 : i_k = i \} = \infty$ for $i \in I$}. \]
Then we repeatedly apply $\Psi_i$ according to $\iota$ to obtain a sequence of embeddings of crystals
\[ \dots \circ \Psi_{i_2} \circ \Psi_{i_1} : B(\infty) \hookrightarrow B(\infty) \otimes {\bf B}_{i_1} \hookrightarrow B(\infty) \otimes {\bf B}_{i_2} \otimes {\bf B}_{i_1} \hookrightarrow \cdots. \]
Consequently, we obtain an embedding of crystals
\[ \Psi_\iota : B(\infty) \hookrightarrow \mathbb{Z}^\infty \]
sending $b \in B(\infty)$ to $(\dots, b_2, b_1)$ where $b_k = \varepsilon_{i_k}(\widetilde{e}_{i_{k-1}}^{\ast\max} \dots \widetilde{e}_{i_1}^{\ast\max} b)$ 
and $b_t = 0$ for sufficiently large $t$. We call this embedding the Kashiwara embedding (associated with $\iota$). 
Note that we identify $\dots \otimes {\bf B}_{i_2} \otimes {\bf B}_{i_1}$ with $\mathbb{Z}^\infty$ via
\[ \dots \otimes (-m_2)_{i_2} \otimes (-m_1)_{i_1} \ \longleftrightarrow\ (\dots, m_2, m_1) \]
for a notational convenience.

To describe the image of $\Psi_\iota$, consider the infinite-dimensional vector space
\[ \mathbb{Q}^\infty = \{ (\dots, m_2, m_1) \,|\, m_i \in \mathbb{Q}, m_k = 0 \mbox{ for sufficiently large }k \}, \]
and the set $\Xi_\iota$ of linear functions on $\mathbb{Q}^\infty$ given in \cite[(3.4)]{NZ97}, which is determined by the sequence $\iota$. 
In addition, we impose the following positivity assumption to $\iota$;
\[ \mbox{for $k \geq 1$, if } i_j \neq i_k \mbox{ for all }j < k,\mbox{ then } \varphi_k \geq 0\mbox{ for any } \varphi = \sum \varphi_k x_k \in \Xi_\iota, \]
where ${\bf e}_k$ is the $k$-th standard basis of $\mathbb{Q}^\infty$ and $x_k$ is the $k$-th dual basis of $\{ {\bf e}_k \}$, or equivalently $x_i({\bf e}_j) = \delta_{ij}$ for $i, j \geq 1$. 
\begin{thm}[{\cite[Theorem 3.1]{NZ97}}] \label{thm:polyhedral realization}
    Under the positivity assumption on $\iota$, the image of $B(\infty)$ under $\Psi_\iota$ is
    \[ \mathcal{B}_\iota(\infty) = \{ {\bf x} \in \mathbb{Z}_+^\infty \,|\, \varphi({\bf x}) \geq 0\mbox{ for all }\varphi \in \Xi_\iota \}. \]
\end{thm}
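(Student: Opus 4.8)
The plan is to realize both sides of the asserted equality as the connected component of the zero vector in a single combinatorial crystal on $\mathbb{Z}^\infty$, and then to match them. First I would transport the crystal structure on $\cdots \otimes {\bf B}_{i_2} \otimes {\bf B}_{i_1}$ to $\mathbb{Z}^\infty$ through the identification $(\dots, x_2, x_1) \leftrightarrow \cdots \otimes (-x_2)_{i_2} \otimes (-x_1)_{i_1}$ and record the resulting operators explicitly: following \cite{NZ97}, for $i \in I$ put $\sigma_k({\bf x}) = x_k + \sum_{j > k} \langle h_{i_k}, \alpha_{i_j} \rangle x_j$, so that $\varepsilon_i({\bf x}) = \max_{k :\, i_k = i} \sigma_k({\bf x})$ and $\te_i, \tf_i$ act by lowering, resp. raising, the single coordinate $x_k$ at the position $k$ selected by the $\sigma_k$'s. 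Because $\Psi_\iota$ is a strict embedding of crystals with $\Psi_\iota({\bf 1}) = {\bf 0}$, and $B(\infty)$ is connected (generated from ${\bf 1}$ by the $\tf_i$), its image is exactly the connected component $C$ of ${\bf 0}$ in $\mathbb{Z}^\infty$. It therefore suffices to prove that the polyhedral set $S := \mathcal{B}_\iota(\infty)$ equals $C$.

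I would derive $S = C$ from three facts: (i) ${\bf 0} \in S$, which is clear since each $\varphi \in \Xi_\iota$ is homogeneous linear, so $\varphi({\bf 0}) = 0$; (ii) $S$ is stable under both $\te_i$ and $\tf_i$ whenever the result is not ${\bf 0}$; and (iii) every nonzero ${\bf x} \in S$ can be carried to ${\bf 0}$ by finitely many $\te_i$. Granting these, the $\tf_i$-stability in (ii) together with ${\bf 0} \in S$ yields $C \subseteq S$ (since $C$ is generated from its highest weight vector ${\bf 0}$ by the $\tf_i$), while (iii) together with the $\te_i$-stability yields $S \subseteq C$ (lower ${\bf x}$ to ${\bf 0}$ inside $S$, then reverse the path); the two inclusions complete the argument.

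Fact (iii) is essentially free once (ii) is known: writing ${\rm wt}({\bf x}) = -\sum_k x_k \alpha_{i_k}$, each $\te_i$ decreases $\sum_k x_k$ by one, and no nonzero point of $\mathbb{Z}_+^\infty$ is $\te$-highest — indeed, for the largest index $k$ with $x_k > 0$ one has $\sigma_k({\bf x}) = x_k > 0$, hence $\varepsilon_{i_k}({\bf x}) > 0$ and $\te_{i_k}{\bf x} \neq {\bf 0}$ — so the process terminates at ${\bf 0}$. Thus the real content, and what I expect to be the main obstacle, is the stability in (ii): a single application of $\te_i$ or $\tf_i$ changes one coordinate by $\pm 1$, and one must check that every defining inequality $\varphi({\bf x}) \geq 0$ is preserved. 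The efficient way to handle this is through the recursive description of $\Xi_\iota$ in \cite[(3.4)]{NZ97}, where $\Xi_\iota$ is generated from the coordinate functions by piecewise-linear operators built from the functions $\beta_k = \sigma_k - \sigma_{k^+}$ (with $k^+ = \min\{l > k : i_l = i_k\}$) whose action is tailored to mirror exactly the coordinate change produced by $\te_i$ and $\tf_i$; the positivity assumption on $\iota$ is what keeps the generated coefficients under control. To organize the verification I would induct on the sequence $\iota$, peeling off the first letter $i_1$ and using the known image $\{\, b' \otimes (-m)_{i_1} : \varepsilon_{i_1}^\ast(b') = 0,\ m \in \mathbb{Z}_+ \}$ of $\Psi_{i_1}$ to reduce the statement for $\iota$ to that for the shifted sequence $\iota' = (\dots, i_3, i_2)$. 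Matching the condition $\varepsilon_{i_1}^\ast(b') = 0$ and the reduction $b \mapsto \te_{i_1}^{\ast\max}(b)$ against the recursive production of $\Xi_\iota$ from $\Xi_{\iota'}$ — i.e.\ tracking precisely how the inequalities couple the new coordinate $x_1$ to the remaining ones — is the delicate bookkeeping that the proof hinges on.
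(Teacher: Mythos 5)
The paper itself gives no proof of this statement: it is imported verbatim from \cite{NZ97}, so your proposal can only be judged against Nakashima--Zelevinsky's original argument. Your skeleton does coincide with theirs: transport the crystal structure to $\mathbb{Z}^\infty$, identify ${\rm Im}\,\Psi_\iota$ with the connected component of ${\bf 0}$ (legitimate, since $\Psi_\iota$ is a strict embedding and $B(\infty)$ is connected), and reduce the asserted equality to (i) ${\bf 0} \in \mathcal{B}_\iota(\infty)$, (ii) stability of $\mathcal{B}_\iota(\infty)$ under $\widetilde{e}_i$ and $\widetilde{f}_i$, and (iii) the absence of nonzero $\widetilde{e}$-extremal points in $\mathcal{B}_\iota(\infty)$. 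Your handling of (i) and (iii) is correct; in particular the observation that $\sigma_k({\bf x}) = x_k > 0$ at the last nonzero coordinate, so that some $\widetilde{e}_{i_k}$ applies and the height $\sum_k x_k$ strictly drops, is exactly the right one.

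The gap is (ii), and it is not a peripheral verification: it is the entire mathematical content of the theorem, and your proposal only points at it. What must be shown is that if $\widetilde{f}_i{\bf x} = {\bf x} + {\bf e}_m$, with $m$ the position selected by the $\sigma$-rule, then $\varphi({\bf x} + {\bf e}_m) \geq 0$ for every $\varphi = \sum_j \varphi_j x_j \in \Xi_\iota$ (and similarly for $\widetilde{e}_i$). This is immediate when $\varphi_m \geq 0$, but when $\varphi_m < 0$ one must produce a companion function in $\Xi_\iota$ --- this is precisely what the generating operators $\widehat{S}_k$ of $\Xi_\iota$ are for --- and play it off against the extremality property defining $m$, which controls the signs of the differences $\sigma_m - \sigma_{m'}$ at neighbouring occurrences $m'$ of $i$; the positivity assumption on $\iota$ is what keeps this recursion inside functions that stay nonnegative on the candidate set. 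None of this is carried out, and without it neither inclusion is actually established, so the proposal names the difficulty (``delicate bookkeeping'') rather than resolving it. Separately, your fallback strategy of ``inducting on $\iota$ by peeling off the first letter $i_1$'' is not well founded as stated: $\iota$ is an infinite sequence, so removing letters never reaches a base case. Any induction of this shape has to run on a finite invariant attached to the element (its height $\sum_k x_k$, say), which is in effect what the connected-component argument already exploits.
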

As $\Psi_i$ produces $\varepsilon_i^\ast(b)$ for $b \in B(\infty)$, we also want to get $\varepsilon_i^\ast(b)$ from $\Psi_\iota$, 
and the result is given in \cite{Nas99}. 
In fact, we need an additional assumption (strict positivity assumption) on $\iota$ (see \cite[Section 4.3]{Nas99}), but it is always satisfied in our paper.
\begin{thm}[{\cite[Theorem 4.2]{Nas99}}] \label{thm:epsilon}
    For $b \in \mathcal{B}(\infty)$ and $i \in I$, we have
    \[ \varepsilon_i^\ast(b) = \max\{ -\varphi(b) \,|\, \varphi \in \Xi_\iota^{(i)} \}, \]
    where $\Xi_\iota^{(i)}$ is the set of linear functions on $\mathbb{Z}^\infty$ given in \cite[(4.21)]{Nas99}.
\end{thm}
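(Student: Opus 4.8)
The plan is to compute $\varepsilon_i^\ast(b)=\max\{m\ge 0:(\widetilde e_i^\ast)^m b\neq {\bf 0}\}$ by peeling off the Kashiwara embedding one letter at a time. The base observation is that the formula is trivial when $i$ is the first letter of $\iota$: if $i_1=i$, then $\Psi_i(b)=\widetilde e_i^{\ast\max}(b)\otimes(-\varepsilon_i^\ast(b))_i$ shows at once that $\varepsilon_i^\ast(b)=b_1$. The whole content is therefore to propagate $\varepsilon_i^\ast$ through the tower $\Psi_\iota=\dots\circ\Psi_{i_2}\circ\Psi_{i_1}$ and to recognize the outcome as $\max\{-\varphi(b):\varphi\in\Xi_\iota^{(i)}\}$.

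First I would set up the recursion. Write $b'=\widetilde e_{i_1}^{\ast\max}(b)$, so that $\Psi_{\sigma\iota}(b')=(\dots,b_3,b_2)$ is the tail of $\Psi_\iota(b)$ for the shifted sequence $\sigma\iota=(\dots,i_3,i_2)$, and $b=(\widetilde f_{i_1}^\ast)^{b_1}b'$ with $\varepsilon_{i_1}^\ast(b')=0$. Since $b$ has finite support, I may induct on $\sum_k b_k$ (the height) and assume the asserted formula for $b'$ with respect to $\sigma\iota$. The inductive step is a rank-two computation in the $\{i,i_1\}$-crystal: determine $\varepsilon_i^\ast\big((\widetilde f_{i_1}^\ast)^{b_1}b'\big)$ from $\varepsilon_i^\ast(b')$, the integer $b_1$, and the Cartan pairing $\langle h_i,\alpha_{i_1}\rangle$. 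Unwinding this recursion over all positions accumulates exactly the linear functions constituting $\Xi_\iota^{(i)}$: each $\varphi$ records a choice of occurrence $k$ with $i_k=i$ as the ``source'' of the $\ast$-string together with the pairing corrections collected at the intermediate letters, and the outer maximum is inherited from the maximum produced at each rank-two step.

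To verify the identity at each stage, and thus pin down the value, I would prove the two inequalities separately. For ``$\ge$'', for each $\varphi\in\Xi_\iota^{(i)}$ I would build an explicit string of operators $\widetilde e_i^\ast$, guided by the recursion, witnessing $(\widetilde e_i^\ast)^{-\varphi(b)}b\neq{\bf 0}$, hence $\varepsilon_i^\ast(b)\ge -\varphi(b)$. For ``$\le$'', I would exhibit the $\varphi$ attaining the maximum and show that one further application of $\widetilde e_i^\ast$ annihilates, using the rank-two relations together with the inequalities $\varphi({\bf x})\ge 0$ satisfied by $b\in\mathcal B(\infty)$ coming from $\Xi_\iota$ (Theorem \ref{thm:polyhedral realization}). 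The base case $b={\bf 1}$ is immediate, since $\Psi_\iota({\bf 1})={\bf 0}$ and every $\varphi$ is linear, so both sides vanish.

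The main obstacle is the rank-two bookkeeping. The star operators $\widetilde f_{i_1}^\ast$ with $i_1\neq i$ act on the realization by genuinely piecewise-linear formulas, so I must control which linear branch is active at each step and verify that the successive maxima compose into the single maximum over $\Xi_\iota^{(i)}$ without spawning additional pieces. This is precisely where the positivity (and strict-positivity) assumption on $\iota$ enters: it forces the coordinates to remain nonnegative along the recursion, collapsing the a priori complicated piecewise-linear expressions into the stated clean maximum of linear functions.
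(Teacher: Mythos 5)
First, note that the paper does not prove this statement at all: it is imported verbatim from \cite[Theorem 4.2]{Nas99}, so the only ``proof'' in the paper is the citation, and your attempt has to be measured against Nakashima's argument, which proceeds by an explicit recursive construction of the set $\Xi_\iota^{(i)}$ (via the piecewise-linear operators of \cite[(3.3)]{NZ97} and their $\ast$-analogues), not by the induction you propose.

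Your proposal has a genuine gap at its central step. You claim the inductive step is ``a rank-two computation: determine $\varepsilon_i^\ast\big((\widetilde f_{i_1}^\ast)^{b_1}b'\big)$ from $\varepsilon_i^\ast(b')$, the integer $b_1$, and the Cartan pairing $\langle h_i,\alpha_{i_1}\rangle$.'' No such computation exists: that quantity is not a function of those three data. By the $\ast$-involution it suffices to refute the unstarred analogue, and type $A_2$ already does so. With $\iota_0=(\dots,2,1,2,1)$ and coordinates $(b_{2,1},b_{1,2},b_{1,1})$, take $y_A=(0,0,1)$ and $y_C=(1,1,0)$ in $\mathcal B(\infty)$. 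Both satisfy $\varepsilon_1(y_A)=\varepsilon_1(y_C)=1$ and $\varepsilon_2(y_A)=\varepsilon_2(y_C)=0$, yet
\[ \varepsilon_1(\widetilde f_2\, y_A)=\varepsilon_1\big((0,1,1)\big)=0, \qquad \varepsilon_1(\widetilde f_2\, y_C)=\varepsilon_1\big((1,2,0)\big)=1 . \]
So even after fixing all the statistics you allow yourself (including the vanishing of the $\varepsilon$-value in the direction being applied, which is automatic for $b'=\widetilde e_{i_1}^{\ast\max}b$), the two sides of your recursion diverge. This is not a repairable bookkeeping issue but the heart of the matter: the amount of information that must be propagated through each letter of $\iota$ is exactly the full vector of values $\varphi(b)$ for the growing family of linear functions, which is what Nakashima's operators generate and what makes $\Xi_\iota^{(i)}$ a genuinely large set rather than a single function. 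Relatedly, you never engage with the actual definition of $\Xi_\iota^{(i)}$ in \cite[(4.21)]{Nas99}, so the final claim that your accumulated corrections ``constitute exactly'' $\Xi_\iota^{(i)}$ is an assertion with no argument behind it; the paragraph where you concede that ``the main obstacle is the rank-two bookkeeping'' is precisely the theorem, not an implementation detail.
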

When there is no confusion on $\iota$, we use the expression like $\mathcal{B}(\infty)$ and $\Xi$.

In this paper, we always assume 
\begin{equation} \label{eqn:iota}
    \iota = \iota_0 = (\dots, n, \dots, 1, n, \dots, 1)
\end{equation}
where $I = \{ 1, \dots, n \}$ is the index set of the symmetrizable Kac-Moody algebra $\mathfrak{g}$. 
We choose the following two-parameter index; for $(s,t) \in \mathbb{N} \times I$, we put $b_{s,t} = b_k$ with $k = n(s-1)+t$. 
In other words, it means that $i_k = t$ and it is the $s$-th appearence of $t$ in $\iota$. 
Any other notations with the two-parameter index, e.g. ${\bf e}_{s,t}$ and $x_{s,t}$, are understood in this context.

Let 
\begin{eqnarray*}
    \mathcal{I}_n^A &=& \{ (s, t) \in \mathbb{N} \times I \,|\, s+t \leq n+1 \}, \\
    \mathcal{I}_n^B &=& \{ (s, t) \in \mathbb{N} \times I \,|\, 1 \leq s \leq n \}, \\
    \mathcal{I}_n^D &=& \{ (s, t) \in \mathbb{N} \times I \,|\, 1 \leq s \leq n-1 \}.
\end{eqnarray*}
Then we have
\[ \mathcal{B}(\infty) = \left\{ (b_{s,t}) \in \mathbb{Z}_+^\infty \,\left|\ \begin{minipage}{0.44\textwidth}
    $b_{s,t} = 0$ unless $(s, t) \in \mathcal{I}_n^A, \\ b_{1,k} \geq b_{2, k-1} \geq \dots \geq b_{k, 1}$ for $1 \leq k \leq n$
    \end{minipage}
\right. \right\} \]
when $\mathfrak{g}$ is of type $A_n$, which is shown in \cite[Theorem 5.1]{NZ97}, 
\[ \mathcal{B}(\infty) = \left\{ (b_{s,t}) \in \mathbb{Z}_+^\infty \,\left|\ \begin{minipage}{0.73\textwidth}
    $b_{s,t} = 0$ unless $(s, t) \in \mathcal{I}_n^D, \\ b_{1,k} \geq b_{2, k-1} \geq \dots \geq b_{k, 1}$ for $1 \leq k \leq n-2$, \\
    $b_{k,n-1}+b_{k,n} \geq b_{k+1,n-2} \geq \dots \geq b_{n-1,k}$ for $1 \leq k \leq n-2$, \\
    $b_{k,n-k} \geq b_{k, n-k+1} \geq \dots \geq b_{k, n-2} \geq b_{k, n-1}+b_{k, n}$ for $2 \leq k \leq n-1$, \\
    $b_{1, n-1} \geq b_{2, n} \geq b_{3, n-1} \geq \dots \geq b_{n-1, n}\mbox{ or } b_{n-1, n-1}$ \\
    $b_{1, n} \geq b_{2, n-1} \geq b_{3, n} \geq \dots \geq b_{n-1, n-1}\mbox{ or } b_{n-1, n}$
    \end{minipage}
\right. \right\} \]
when $\mathfrak{g}$ is of type $D_n$, which is shown in \cite[Theorem 3.12]{Ho05}, and
\[ \mathcal{B}(\infty) = \left\{ (b_{s,t}) \in \mathbb{Z}_+^\infty \,\left|\ \begin{minipage}{0.52\textwidth}
    $b_{s,t} = 0$ unless $(s, t) \in \mathcal{I}_n^B, \\ b_{1,k} \geq b_{2, k-1} \geq \dots \geq b_{k, 1}$ for $1 \leq k \leq n-1$, \\
    $b_{k,n} \geq b_{k+1,n-1} \geq \dots \geq b_{n,k}$ for $1 \leq k \leq n$, \\
    $b_{k,n-k+1} \geq b_{k, n-k+2} \geq \dots \geq b_{k, n}$ for $2 \leq k \leq n$
    \end{minipage}
\right. \right\} \]
when $\mathfrak{g}$ is of type $B_n$, which is shown in \cite[Theorem 3.5]{Ho05}.
For the description of $\mathcal{B}(\infty)$ for other Kac-Moody algebras, see \cite{Ho05,Ho13}. 
In addition, one can find the description of $\mathcal{B}(\infty)$ for different choices of $\iota$ in \cite{KaNa20,NZ97}.

Throughout this paper, we assume ${\bf e}_{s,t} = 0$ and $x_{s, t} = 0$ if $(s, t) \not\in \mathcal{I}_n^X$. 

\subsection{The PBW crystal} \label{subsec:PBW}
Assume that $\mathfrak{g}$ is a finite-dimensional simple Lie algebra. 
Let $T_i = T_{i,-1}'$ be the $\mathbb{Q}(q)$-algebra automorphism on $U_q(\mathfrak{g})$ given in \cite[Chapter 37]{Lus}. 
For the longest element $w_0$ in the Weyl group $W$ with $\ell = \ell(w_0)$, 
fix a reduced expression $\underline{w_0} = s_{i_1} \dots s_{i_\ell}$ of $w_0$. 
Then the set of positive roots is enumerated by
\[ \beta_1 = \alpha_{i_1}, \quad \beta_{k+1} = s_{i_1}\dots s_{i_k}(\alpha_{i_{k+1}}) \mbox{ for }k \geq 1. \]
For $1 \leq k \leq \ell$ and $c \in \mathbb{Z}_+$, let
\[ F_{\beta_k}^{(c)} = T_{i_1} T_{i_2} \dots T_{i_{k-1}}(f_{i_k}^{(c)}), \]
where $f_i^{(c)}$ is the $c$-th divided power (cf. \cite[Section 1.4]{Lus}), and,
for ${\bf c} = (c_1, \dots, c_\ell) \in \mathbb{Z}_+^\ell$, let
\[ F({\bf c}) = F_{\beta_\ell}^{(c_\ell)} \dots F_{\beta_1}^{(c_1)}. \]
Then the set $\{ F({\bf c}) \,|\, c_i \in \mathbb{Z}_+ \}$ is a $\mathbb{Q}(q)$-basis of $U_q^-$, which is called the PBW basis of $U_q^-(\mathfrak{g})$. 
We remark that we choose different convention of $F({\bf c})$ from the one in \cite{Saito94} for the notational coherence (cf. Section \ref{sec:categorical crystal}). 

Moreover, it is proved in \cite{Saito94} that $\{ F({\bf c}) \pmod{q L(\infty)} \,|\, {\bf c} \in \mathbb{Z}_+^\ell \} = B(\infty)$, 
where $L(\infty)$ is the crystal lattice of $U_q^-(\mathfrak{g})$ (cf. \cite{Kas91}). 
In other words, for $b \in B(\infty)$, there is unique ${\bf c} \in \mathbb{Z}_+^\ell$ such that $b \equiv F({\bf c}) \pmod{q L(\infty)}$, and 
we call such ${\bf c}$ the PBW datum (or Lusztig datum) of $b$. 
The PBW crystal $\mathbb{B}_{\underline{w_0}}(\infty)$ (associated with $\underline{w_0}$) is the set $\mathbb{Z}_+^\ell$ with the crystal structure 
determined by the identification between ${\bf c} \in \mathbb{Z}_+^\ell$ and $F({\bf c}) \pmod{qL(\infty)} \in B(\infty)$. 

Since two crystals $\mathcal{B}(\infty)$ and $\mathbb{B}_{\underline{w_0}}(\infty)$ are isomorphic, 
it is natural to ask whether we can construct the explicit bijection between them. 
In principle, the crystal isomorphism 
\begin{equation} \label{eqn:PBW-polyhed}
    \phi : \mathbb{B}_{\underline{w_0}}(\infty) \longrightarrow \mathcal{B}(\infty)
\end{equation}
can be constructed by calculating $\varepsilon_i^\ast(b)$ and $\widetilde{e}_i^{\ast\max}(b)$ for $b \in \mathbb{B}_{\underline{w_0}}(\infty)$. 
For some choices of a reduced expression $\underline{w_0}$, the crystal structure of $\mathbb{B}_{\underline{w_0}}(\infty)$ is given in some references such as \cite{Kwon18, SST18}.
In particular, we focus on the isomorphism for the following reduced expression $\underline{w_0}$. 
\begin{equation} \label{eqn:reduced}
    \begin{cases}
        \underline{w_0} = (s_1)(s_2s_1) (s_3s_2s_1) \dots (s_{n-1} \dots s_1) & \mbox{for type $A_n$} \\
        \underline{w_0} = (s_1 \dots s_n)^n & \mbox{for type $B_n$} \\
        \underline{w_0} = (s_1 \dots s_n)^{n-1} & \mbox{for type $D_n$}
    \end{cases}
\end{equation}
When we consider the reduced expression as the above $\underline{w_0}$, we use notations $\mathbb{B}(\infty)$ instead of $\mathbb{B}_{\underline{w_0}}(\infty)$. 

In particular, when $\mathfrak{g}$ is of type $A_n$, we can explicitly describe the bijection $\phi$. 
Recall that a reduced expression of $w_0$ induces a convex order on the positive roots. 
The corresponding convex order to $\underline{w_0} = (s_1)(s_2s_1) (s_3s_2s_1) \dots (s_{n-1} \dots s_1)$ is the following.
\begin{eqnarray*}
    && \beta_1 = \alpha_1 < \alpha_1+\alpha_2 < \alpha_2 < \alpha_1 + \alpha_2 + \alpha_3 < \alpha_2 + \alpha_3 < \alpha_3 \\
    && \hspace{1cm} < \dots < \alpha_1 + \dots + \alpha_{n-1} < \alpha_2 + \dots + \alpha_{n-1} < \dots < \alpha_{n-1} = \beta_\ell
\end{eqnarray*}
When $\beta_k = \alpha_i + \alpha_{i+1} + \dots + \alpha_j$ for some $1 \leq i \leq j \leq n-1$, we write $c_k = c_{[i,j]}$. 
Then the isomorphism $\phi$ sends $(c_1, \dots, c_\ell) = (c_{[1,1]}, \dots, c_{[n-1,n-1]}) \in \mathbb{B}(\infty)$ to 
$(b_{n-1,1}, \dots, b_{1,2}, b_{1,1}) \in \mathcal{B}(\infty)$, where
\[ b_{i,j} = \sum_{t=1}^j c_{[t, i+j-1]}. \]
We remark that the inverse map is given by the equations 
\[ c_{[i,j]} = b_{j-i+1,i} - b_{j-i+2, i-1} \]
assuming $b_{k,0} = 0$ for $k \in \mathbb{Z}$. 

\subsection{The extended crystal}
We recall the notion of the extended crystal (see \cite{KP22}). For a generalized Kac-Moody algebra $\mathfrak{g}$, let
\[ \widehat{B}_\mathfrak{g}(\infty) = \left\{ {\bf b} = (b^{(k)})_{k \in \mathbb{Z}} \in \prod_{k \in \mathbb{Z}} B_\mathfrak{g}(\infty) \,:\, b^{(k)} = {\bf 1} \mbox{ for all but finitely many }k \right\} \]
be the extended crystal, which is an infinite copy of $B_\mathfrak{g}(\infty)$ as a set. 
To record the position, we draw the underline at the $0$-th component of ${\bf b} \in \widehat{B}_\mathfrak{g}(\infty)$, 
e.g. ${\bf b} = (\dots, b^{(2)}, b^{(1)}, \underline{b^{(0)}}, b^{(-1)}, \dots)$.
\begin{defn}[\cite{KP22}] \label{def:extended crystal}
    Let $\widehat{I} := I \times \mathbb{Z}$. 
    The extended crystal is the set $\widehat{B}_\mathfrak{g}(\infty)$ with the following maps 
    $(\widehat{B}_\mathfrak{g}(\infty), \widehat{\rm wt}, \widetilde{E}_{(i,k)}, \widetilde{F}_{(i,k)}, \widehat{\varepsilon}_{(i,k)})$. 
    For $(i, k) \in \widehat{I}$ and ${\bf b} = (b^{(k)}) \in \widehat{B}_\mathfrak{g}(\infty)$,
    \begin{itemize}
        \item $\displaystyle \widehat{\rm wt}({\bf b}) = \sum_{t \in \mathbb{Z}} (-1)^t \,{\rm wt}(b^{(t)})$,
        \item $\widehat{\varepsilon}_{(i,k)}({\bf b}) = \varepsilon_i(b^{(k)}) - \varepsilon_{i+1}^\ast(b^{(k+1)})$,
        \item (the extended crystal operators)
        \[ \widetilde{F}_{(i,k)}({\bf b}) = \begin{cases}
            (\dots, b^{(k+2)}, b^{(k+1)}, \widetilde{f}_i(b^{(k)}), b^{(k-1)}, \dots) & \mbox{if } \widehat{\varepsilon}_{(i,k)}({\bf b}) \geq 0, \\
            (\dots, b^{(k+2)}, \widetilde{e}_i^\ast(b^{(k+1)}), b^{(k)}, b^{(k-1)}, \dots) & \mbox{if } \widehat{\varepsilon}_{(i,k)}({\bf b}) < 0,
        \end{cases} \]
        \[ \widetilde{E}_{(i,k)}({\bf b}) = \begin{cases}
            (\dots, b^{(k+2)}, b^{(k+1)}, \widetilde{e}_i(b^{(k)}), b_{(k-1)}, \dots) & \mbox{if } \widehat{\varepsilon}_{(i,k)}({\bf b}) > 0, \\
            (\dots, b^{(k+2)}, \widetilde{f}_i^\ast(b^{(k+1)}), b^{(k)}, b_{(k-1)}, \dots) & \mbox{if } \widehat{\varepsilon}_{(i,k)}({\bf b}) \leq 0.
        \end{cases} \]
    \end{itemize}
\end{defn}
The extended crystal is proved to have similar properties as that of (usual) crystals. For those properties, see \cite[Section 4]{KP22}. 
If there is no confusion, we may omit the subscript $\mathfrak{g}$. 

\section{A combinatorial description of the bicrystal $\mathcal{B}(\infty)$} \label{sec:bicrystal}
\subsection{Settings}
For $(s, t) \in \mathcal{I}_n^X$, set
\[ \partb_{s,t} = x_{s,t} + \sum_{k = t+1}^n \langle h_t, \alpha_k \rangle \,x_{s,k} + \sum_{k = 1}^{t-1} \langle h_t, \alpha_k \rangle\, x_{s+1, k} + x_{s+1,t} \]
and
\[ \partb_{s,t}^\ast = x_{s-1,t} + \sum_{k = t+1}^n \langle h_t, \alpha_k \rangle \,x_{s-1,k} + \sum_{k = 1}^{t-1} \langle h_t, \alpha_k \rangle\, x_{s, k} + x_{s,t}. \]
For example, we have
\begin{eqnarray*}
    \mbox{(type $A_n$)}\quad \partb_{s,t} &=& x_{s,t} - x_{s,t+1} - x_{s+1,t-1} + x_{s+1, t}, \\
    \mbox{(type $D_n$)}\quad \partb_{s,t} &=& \begin{cases}
        x_{s,t} - x_{s, t+1} - x_{s+1, t-1} + x_{s+1,t} & \mbox{if } 1 \leq t \leq n-3, \\
        x_{s,n-2} - x_{s,n-1} - x_{s,n} - x_{s+1,n-3} + x_{s+1,n-2} & \mbox{if } t = n-2, \\
        x_{s,n-1} - x_{s+1, n-2} + x_{s+1, n-1} & \mbox{if } t = n-1, \\
        x_{s,n} - x_{s+1, n-2} + x_{s+1, n} & \mbox{if } t = n,
    \end{cases} \\
    \mbox{(type $B_n$)}\quad \partb_{s,t} &=& \begin{cases}
        x_{s,t} - x_{s, t+1} - x_{s+1, t-1} + x_{s+1,t} & \mbox{if } 1 \leq t \leq n-1, \\
        x_{s,n} - 2x_{s+1, n-1} + x_{s+1, n} & \mbox{if } t = n.
    \end{cases}
\end{eqnarray*}

Now, we define tableaux (type-dependent) $T_i$ and $T_i^\ast$ filled with some $\partb_{s,t}$ and $\partb_{s,t}^\ast$ in the following ways,
which will be used in Section \ref{subsec:bicrystal description}. 
Let $\eta_k = (k)$ be the horizontal shape partition of size $k \in \mathbb{Z}_+$, 
and $\delta_n = (n, n-1, \dots, 2, 1)$ be the stair-shape partition of length $n \in \mathbb{N}$.

\begin{defn} \label{def:partition set}\hfill

    (1) Assume $\mathfrak{g}$ is of type $A_n$. 
    For an integer $1 \leq i \leq n$, the tableau $T_i$ is of shape $\eta_{n+1-i}$ such that
    \[ T_i(1, t) = \partb_{t,i} \quad \mbox{for } 1 \leq t \leq n+1-i. \]
    On the other hand, for $1 \leq i \leq n$, the tableau $T_i^\ast$ is of shape $\eta_i$ such that 
    \[ T_i^\ast(1, t) = \partb_{t, i+1-t}^\ast \quad \mbox{for }1 \leq t \leq i. \]
    
    (2) Assume $\mathfrak{g}$ is of type $D_n$. 
    For an integer $1 \leq i \leq n$, the tableau $T_i$ is of shape $\eta_{n-1}$ such that 
    \[ T_i(1, t) = \partb_{t,i}\quad \mbox{for } 1 \leq t \leq n-1. \]
    On the other hand, for $1 \leq i \leq n-2$, the tableau $T_i^\ast$ is of shape $\eta_i$ such that 
    \[ T_i^\ast(1, t) = \partb_{t, i+1-t}^\ast\quad \mbox{for } 1 \leq t \leq i. \]
    The tableau $T_{n-1}^\ast$ is of shape $\delta_{n-1}$ such that 
    \[ T_{n-1}^\ast(s, t) = \begin{cases}
        \partb^\ast_{s+t-1, n-t} & \mbox{if } t \geq 2, \\
        \partb^\ast_{s, n-1} & \mbox{if $t=1$ and $s$ is odd}, \\
        \partb^\ast_{s, n} & \mbox{if $t=1$ and $s$ is even},
    \end{cases} \]
    and the tableau $T_n^\ast$ is of shape $\delta_{n-1}$ such that 
    \[ T_n^\ast(s, t) = \begin{cases}
        \partb^\ast_{s+t-1, n-t} & \mbox{if } t \geq 2, \\
        \partb^\ast_{s, n} & \mbox{if $t=1$ and $s$ is odd}, \\
        \partb^\ast_{s, n-1} & \mbox{if $t=1$ and $s$ is even}.
    \end{cases} \]
    Note that $T_n^\ast$ is obtained by swapping $x_{s,n-1}$ and $x_{s,n}$ from $T_{n-1}^\ast$ (and vice versa).

    (3) Assume $\mathfrak{g}$ is of type $B_n$. 
    For an integer $1 \leq i \leq n$, the tableaux $T_i$ is of shape $\eta_{n}$ such that 
    \[ T_i(1, t) = \partb_{t,i}\quad \mbox{for }1 \leq t \leq n. \]
    On the other hand, for $1 \leq i \leq n-1$, the tableau $T_i^\ast$ is of shape $\eta_i$ such that 
    \[ T_i^\ast(1, t) = \partb_{t, i+1-t}^\ast\quad \mbox{for } 1 \leq t \leq i. \]
    The tableau $T_n^\ast$ is of shape $\delta_n$ such that 
    \[ T_n^\ast(s, t) = \begin{cases}
        2\partb^\ast_{s+t-1, n-t} & \mbox{if } t \geq 2, \\
        \partb^\ast_{s, n} & \mbox{if } t = 1.
    \end{cases} \]
\end{defn}

\begin{ex}
    (1) When $\mathfrak{g}$ is of type $A_3$, the tableaux $T_i$ for $1 \leq i \leq 3$ are given as follows.
    \[ \ytableausetup{boxsize = 2em}
        T_1\quad \raisebox{-0.6em}{$\begin{ytableau}
            \partb_{1,1} & \partb_{2,1} & \partb_{3,1}
        \end{ytableau}$} \hspace{2cm}
        T_2\quad \raisebox{-0.6em}{$\begin{ytableau}
            \partb_{1,2} & \partb_{2,2}
        \end{ytableau}$} \hspace{2cm}
        T_3\quad \raisebox{-0.6em}{$\begin{ytableau}
            \partb_{1,3}
        \end{ytableau}$}
    \]
    On the other hand, the tableaux $T_i^\ast$ for $1 \leq i \leq 3$ are given as follows.
    \[ \ytableausetup{boxsize = 2em}
        T_1^\ast\quad \raisebox{-0.6em}{$\begin{ytableau}
            \partb_{1,1}^\ast
        \end{ytableau}$} \hspace{2cm}
        T_2^\ast\quad \raisebox{-0.6em}{$\begin{ytableau}
            \partb_{1,2}^\ast & \partb_{2,1}^\ast
        \end{ytableau}$} \hspace{2cm}
        T_3^\ast\quad \raisebox{-0.6em}{$\begin{ytableau}
            \partb_{1,3}^\ast & \partb_{2,2}^\ast & \partb_{3,1}^\ast
        \end{ytableau}$}
    \]
    
    (2) When $\mathfrak{g}$ is of type $D_4$, the tableaux $T_i$ for $1 \leq i \leq 4$ are given as follows.
    \[ \ytableausetup{boxsize = 2em}
        T_i\quad \raisebox{-0.6em}{$\begin{ytableau}
            \partb_{1,i} & \partb_{2,i} & \partb_{3,i}
        \end{ytableau}$}
    \]
    On the other hand, the tableaux $T_i^\ast$ for $1 \leq i \leq 4$ are given as follows.
    \[
        T_1^\ast\quad \raisebox{-0.6em}{$\begin{ytableau}
            \partb^\ast_{1,1}
        \end{ytableau}$} \hspace{1.0cm}
        T_2^\ast\quad \raisebox{-0.6em}{$\begin{ytableau}
            \partb_{1,2}^\ast & \partb_{2,1}^\ast
        \end{ytableau}$} \hspace{1.0cm}
        T_3^\ast\quad \raisebox{-1em}
        {$\begin{ytableau}
            \partb_{1,3}^\ast & \partb_{2,2}^\ast & \partb_{3,1}^\ast \\ \partb_{2,4}^\ast & \partb_{3,2}^\ast \\ \partb_{3,3}^\ast
        \end{ytableau}$} \hspace{1.0cm}
        T_4^\ast\quad \raisebox{-1em}
        {$\begin{ytableau}
            \partb_{1,4}^\ast & \partb_{2,2}^\ast & \partb_{3,1}^\ast \\ \partb_{2,3}^\ast & \partb_{3,2}^\ast \\ \partb_{3,4}^\ast
        \end{ytableau}$}
    \]

    (3) When $\mathfrak{g}$ is of type $B_3$, the tableaux $T_i$ for $1 \leq i \leq 3$ are given as follows.
    \[ \ytableausetup{boxsize = 2em}
        T_i\quad \raisebox{-0.6em}{$\begin{ytableau}
            \partb_{1,i} & \partb_{2,i} & \partb_{3,i}
        \end{ytableau}$}
    \]
    On the other hand, the tableaux $T_i^\ast$ for $1 \leq i \leq 3$ are given as follows.
    \[
        T_1^\ast\quad \raisebox{-0.6em}{$\begin{ytableau}
            \partb^\ast_{1,1}
        \end{ytableau}$} \hspace{1.5cm}
        T_2^\ast\quad \raisebox{-0.6em}{$\begin{ytableau}
            \partb_{1,2}^\ast & \partb_{2,1}^\ast
        \end{ytableau}$} \hspace{1.5cm}
        T_3^\ast \quad \raisebox{-1em}
        {\ytableausetup{boxsize = 2.5em}$\begin{ytableau}
            \partb_{1,3}^\ast & 2\partb_{2,2}^\ast & 2\partb_{3,1}^\ast \\ \partb_{2,3}^\ast & 2\partb_{3,2}^\ast \\ \partb_{3,3}^\ast
        \end{ytableau}$}
    \]
\end{ex}

Let $\Pi_i$ (resp. $\Pi_i^\ast$) be the set of non-empty partitions (resp. non-empty strict partitions) contained in the shape of $T_i$ (resp. $T_i^\ast$). 
Suppose that a tableau $T$ of shape $\tau$ is filled with $\partb_{a,b}$ and $\partb_{a,b}^\ast$ (with allowing $2\partb_{a,b}^\ast$ for type $B_n$). 
For $(s,t) \in \tau$, we denote by $\mathcal{I}_T(s,t) \in \mathcal{I}_n^X$ the two-parameter index occupying $T(s,t)$. 
In other words, $\mathcal{I}_T(s,t) = (a,b) \in \mathcal{I}_n^X$ if and only if $T(s,t) = \partb_{a,b}$ or $\partb_{a,b}^\ast$ (or $2\partb_{a,b}^\ast$ for type $B_n$).

\subsection{A description on the bicrystal structure of $B(\infty)$} \label{subsec:bicrystal description}
For $i \in I$ and $\lambda \in \Pi_i$ (cf. Definition \ref{def:partition set}) and $b \in \mathcal{B}(\infty)$, define
\begin{eqnarray*}
    \sumb_\lambda &=& \sum_{s = |\lambda|}^{|\eta|} \partb_{s,i} \left( = \sum_{s = |\lambda|}^{|\eta|} T_i(1,s) \right) \,, \\
    \sigma_i(b) &=& \max\{ \sumb_\lambda(b) : \lambda \in \Pi_i \}, \\
    m_i(b) &=& \min\{ \lambda \in \Pi_i : \sumb_\lambda(b) = \sigma_i(b) \}, \\
    M_i(b) &=& \max\{ \lambda \in \Pi_i : \sumb_\lambda(b) = \sigma_i(b) \},
\end{eqnarray*}
where $\eta$ is the shape of $T_i$, 
and put
\[ {\bf v}(s,t) = {\bf e}_{s,t} - {\bf e}_{s-1,t} \]
for $(s,t) \in \mathcal{I}_n^X$. 
\begin{defn} \label{def:crystal usual}
    Suppose a finite-dimensional Lie algebra $\mathfrak{g}$ is of type $ABD$. 
    The maps associated with the crystal $\mathcal{B}(\infty)$ are the followings. For $b = (b_{s,t}) \in \mathcal{B}(\infty)$,
    \begin{eqnarray*}
        {\rm wt}(b) &=& -\sum_{(s,t) \in \mathcal{I}_n^X} b_{s,t}\,\alpha_t, \\
        \varepsilon_i(b) &=& \sigma_i(b), \\
        \varphi_i(b) &=& \varepsilon_i(b) + \langle h_i, {\rm wt}(b) \rangle, \\
        \widetilde{e}_i(b) &=& \begin{cases} \displaystyle b - \sum_{(s,t) \in M_i} {\bf v}(\mathcal{I}_{T_i}(s,t)) & \mbox{if } \varepsilon_i(b) > 0, \\
            0 & \mbox{otherwise}, \end{cases} \\
        \widetilde{f}_i(b) &=& b + \sum_{(s,t) \in m_i} {\bf v}(\mathcal{I}_{T_i}(s,t)),
    \end{eqnarray*}
    where $m_i = m_i(b)$ and $M_i = M_i(b)$. 
\end{defn}
Note that it is the same as the maps given in \cite[Section 2.4]{NZ97}. 
In particular, we have $\displaystyle \sum_{(s,t) \in \eta_k} {\bf v}(\mathcal{I}_{T_i}(s,t)) = {\bf e}_{k,i}$ for $1 \leq k \leq i$.

\begin{rem}
    By construction, we have infinitely many trivial components $b_{s,t}$ (for $(s, t) \not \in \mathcal{I}_n^X$), so we will ignore them.
    Indeed, we can explain the reason why this argument holds using the braid-type isomorphism (cf. \cite{Nas99braid}). 
    Thus, we consider $b \in \mathcal{B}(\infty)$ as an element of $\mathbb{Z}_+^N$ (not $\mathbb{Z}_+^\infty$), 
    where $N$ is the number of positive roots for underlying Lie algebra $\mathfrak{g}$.
\end{rem}

On the other hand, we define other maps on $\mathcal{B}(\infty)$. 
First, we define linear functions
\[ \sumb_\lambda^\ast = \sum_{(s,t) \in \lambda} T_i^\ast(s,t) \]
for $i \in I$ and $\lambda \in \Pi_i^\ast$. By the case-by-case calculation, we have
\[ \{ \sumb_\lambda^\ast : \lambda \in \Pi_i^\ast \} = -\Xi^{(i)} \,(= \{ -\varphi : \varphi \in \Xi^{(i)} \}); \]
we leave it as an exercise. Recall the sequence we use is the sequence $\iota_0$ given in \eqref{eqn:iota}.
\begin{ex} \label{ex:-Xi}
    (1) When $\mathfrak{g}$ is of type $A_3$, we have the following correspondence between $-\Xi^{(3)}$ and $\Pi^\ast_{3}$.
    \[ \renewcommand{\arraystretch}{1.5}
    \begin{array}[!htp]{rcccccc}
        \Pi_3^\ast : && (1) & \subseteq & (2) & \subseteq & (3) \\
        -\Xi^{(3)} : && x_{1,3}-x_{1,2} & & x_{2,2}-x_{2,1} & & x_{3,1}
    \end{array} \]

    (2) When $\mathfrak{g}$ is of type $D_4$, we have the following correspondence between $-\Xi^{(3)}$ and $\Pi^\ast_{3}$.
    \[
        -\Xi^{(3)} = \left\{ \raisebox{-7.5em}{$\begin{tikzpicture}
            \node at (0, 0) {$x_{1,3}-x_{1,2}$};
            \node at (0, 1) {$x_{2,2}-x_{1,4}-x_{2,1}$};
            \node at (-1.5, 2) {$x_{2,4}-x_{2,1}$};
            \node at (1.5, 2) {$x_{3,1}-x_{1,4}$};
            \node at (0, 3) {$x_{2,4}+x_{3,1}-x_{2,2}$};
            \node at (0, 4) {$x_{3,2}-x_{2,3}$};
            \node at (0, 5) {$x_{3,3}$};

            \draw (0, 0.3) -- (0, 0.7);
            \draw (-0.15, 1.3) -- (-1.5, 1.7);
            \draw (0.15, 1.3) -- (1.5, 1.7);
            \draw (-1.5, 2.3) -- (-0.15, 2.7);
            \draw (1.5, 2.3) -- (0.15, 2.7);
            \draw (0, 3.3) -- (0, 3.7);
            \draw (0, 4.3) -- (0, 4.7);
        \end{tikzpicture}$} \right\} \qquad 
        \Pi_3^\ast = \left\{ \raisebox{-7.5em}{$\begin{tikzpicture}\ytableausetup{smalltableaux}
            \node at (0, 0) {$(1)$};
            \node at (0, 1) {$(2)$};
            \node at (-1, 2) {$(2,1)$};
            \node at (1, 2) {$(3)$};
            \node at (0, 3) {$(3,1)$};
            \node at (0, 4) {$(3,2)$};
            \node at (0, 5) {$(3,2,1)$};

            \draw (0, 0.3) -- (0, 0.7);
            \draw (-0.15, 1.3) -- (-1, 1.7);
            \draw (0.15, 1.3) -- (1, 1.7);
            \draw (-1, 2.3) -- (-0.15, 2.7);
            \draw (1, 2.3) -- (0.15, 2.7);
            \draw (0, 3.3) -- (0, 3.7);
            \draw (0, 4.3) -- (0, 4.7);
        \end{tikzpicture}$} \right\}
        %
    \]

    (3) When $\mathfrak{g}$ is of type $B_3$, we have the following correspondence between $-\Xi^{(3)}$ and $\Pi^\ast_{3}$.
    \[
        -\Xi^{(3)} = \left\{ \raisebox{-7.5em}{$\begin{tikzpicture}
            \node at (0, 0) {$x_{1,3}-2x_{1,2}$};
            \node at (0, 1) {$2x_{2,2}-x_{1,3}-2x_{2,1}$};
            \node at (-1.5, 2) {$x_{2,3}-2x_{2,1}$};
            \node at (1.5, 2) {$2x_{3,1}-x_{1,3}$};
            \node at (0, 3) {$x_{2,3}+2x_{3,1}-2x_{2,2}$};
            \node at (0, 4) {$2x_{3,2}-x_{2,3}$};
            \node at (0, 5) {$x_{3,3}$};

            \draw (0, 0.3) -- (0, 0.7);
            \draw (-0.15, 1.3) -- (-1.5, 1.7);
            \draw (0.15, 1.3) -- (1.5, 1.7);
            \draw (-1.5, 2.3) -- (-0.15, 2.7);
            \draw (1.5, 2.3) -- (0.15, 2.7);
            \draw (0, 3.3) -- (0, 3.7);
            \draw (0, 4.3) -- (0, 4.7);
        \end{tikzpicture}$} \right\} \qquad 
        \Pi_3^\ast = \left\{ \raisebox{-7.5em}{$\begin{tikzpicture}\ytableausetup{smalltableaux}
            \node at (0, 0) {$(1)$};
            \node at (0, 1) {$(2)$};
            \node at (-1, 2) {$(2,1)$};
            \node at (1, 2) {$(3)$};
            \node at (0, 3) {$(3,1)$};
            \node at (0, 4) {$(3,2)$};
            \node at (0, 5) {$(3,2,1)$};

            \draw (0, 0.3) -- (0, 0.7);
            \draw (-0.15, 1.3) -- (-1, 1.7);
            \draw (0.15, 1.3) -- (1, 1.7);
            \draw (-1, 2.3) -- (-0.15, 2.7);
            \draw (1, 2.3) -- (0.15, 2.7);
            \draw (0, 3.3) -- (0, 3.7);
            \draw (0, 4.3) -- (0, 4.7);
        \end{tikzpicture}$} \right\}
    \]
\end{ex}

\begin{rem}

    (1) Whereas the partial order on $\Pi_i^\ast$ is obvious, that on $-\Xi^{(i)}$ is not. 
    The partial order on $-\Xi^{(i)}$ is defined using operators $S_{s,t}$ on $(\mathbb{Q}^\infty)^\ast$ (cf. \cite[(3.3)]{NZ97}). 

    (2) For a linear function $f \in -\Xi^{(i)}$, the partition $\lambda$ such that $\sumb_\lambda^\ast = f$ is determined by coefficients of $f$. 
    To be precise, suppose $A$ is the set of two-parameter indices $(s,t)$ such that the coefficient of $x_{s,t}$ in $f$ is positive. 
    Then $\lambda$ is the partition whose outer corners are filled with $x_{s,t}$'s for $(s,t) \in A$.
\end{rem}

For $i \in I$ and $b \in \mathcal{B}(\infty)$, define
\begin{eqnarray*}
    \sigma_i^\ast(b) &=& \max\{ \sumb_\lambda^\ast(b) : \lambda \in \Pi^\ast_{i} \}, \\
    m_i^\ast(b) &=& \min\{ \lambda \in \Pi_i^\ast : \sumb_\lambda^\ast(b) = \sigma_i^\ast(b) \},\\
    M_i^\ast(b) &=& \max\{ \lambda \in \Pi_i^\ast : \sumb_\lambda^\ast(b) = \sigma_i^\ast(b) \}.
\end{eqnarray*}
By Theorem \ref{thm:epsilon}, we have $\varepsilon_i^\ast(b) = \sigma_i^\ast(b)$ for $b \in \mathcal{B}(\infty)$. 
The set $\Pi_i^\ast$ may not be totally ordered (cf. Example \ref{ex:-Xi}), and it may cause $m_i^\ast(b)$ or $M_i^\ast(b)$ not to be well-defined. 
But the following lemma guarantees the well-definedness of $m_i^\ast(b)$ and $M_i^\ast(b)$.
\begin{lem}
    For $b \in \mathcal{B}(\infty)$, suppose that $\sigma_i^\ast(b) = \sumb_\lambda^\ast(b) = \sumb_\mu^\ast(b)$ for some $\lambda, \mu \in \Pi_{i}^\ast$. 
    Then we have $\sigma_i^\ast(b) = \sumb_{\lambda\cup\mu}^\ast(b) = \sumb_{\lambda \cap \mu}^\ast(b)$.
\end{lem}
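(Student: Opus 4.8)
The plan is to deduce the statement from two facts: that $\Pi_i^\ast$ is closed under the union and intersection of partitions, and that the assignment $\lambda\mapsto\sumb_\lambda^\ast$ is \emph{modular}. Granting both, the argument is short. Since $\sumb_\lambda^\ast=\sum_{(s,t)\in\lambda}T_i^\ast(s,t)$ is the sum of the tableau entries over the cells of $\lambda$, and since the $\max$/$\min$ descriptions of $\cup$ and $\cap$ make the Young diagram of $\lambda\cup\mu$ (resp. $\lambda\cap\mu$) the set-theoretic union (resp. intersection) of those of $\lambda$ and $\mu$, every cell $(s,t)$ is counted on the left of $\sumb_{\lambda\cup\mu}^\ast+\sumb_{\lambda\cap\mu}^\ast$ exactly as often as on the right of $\sumb_\lambda^\ast+\sumb_\mu^\ast$. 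This yields the identity of linear functions $\sumb_{\lambda\cup\mu}^\ast+\sumb_{\lambda\cap\mu}^\ast=\sumb_\lambda^\ast+\sumb_\mu^\ast$, and evaluating at $b$ gives $\sumb_{\lambda\cup\mu}^\ast(b)+\sumb_{\lambda\cap\mu}^\ast(b)=2\sigma_i^\ast(b)$.

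First I would check that $\lambda\cup\mu$ and $\lambda\cap\mu$ again lie in $\Pi_i^\ast$, so that their values are bounded above by $\sigma_i^\ast(b)$. Containment in the shape of $T_i^\ast$ is clear, since $(\lambda\cup\mu)_k=\max\{\lambda_k,\mu_k\}$ and $(\lambda\cap\mu)_k=\min\{\lambda_k,\mu_k\}$ are both bounded by the $k$-th part of the (partition) shape; non-emptiness holds because both $\lambda$ and $\mu$ contain the cell $(1,1)$, hence so do their union and intersection. The one point requiring care is that $\Pi_i^\ast$ consists of \emph{strict} partitions, so I must verify that strictness is preserved. For the union, writing $\nu=\lambda\cup\mu$ and assuming $\nu_k=\lambda_k\geq\mu_k$, strictness of $\lambda$ gives $\lambda_{k+1}<\lambda_k=\nu_k$ while strictness of $\mu$ gives $\mu_{k+1}<\mu_k\leq\nu_k$, so $\nu_{k+1}=\max\{\lambda_{k+1},\mu_{k+1}\}<\nu_k$; the intersection is handled symmetrically, using $\nu_{k+1}\leq\lambda_{k+1}<\lambda_k=\nu_k$ when $\nu_k=\lambda_k\leq\mu_k$. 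Thus $\lambda\cup\mu,\lambda\cap\mu\in\Pi_i^\ast$.

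With closure in hand the conclusion is immediate: both $\sumb_{\lambda\cup\mu}^\ast(b)$ and $\sumb_{\lambda\cap\mu}^\ast(b)$ are at most $\sigma_i^\ast(b)=\max_{\nu\in\Pi_i^\ast}\sumb_\nu^\ast(b)$, while the modularity identity forces their sum to equal $2\sigma_i^\ast(b)$; hence each equals $\sigma_i^\ast(b)$, as claimed.

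I expect the only genuine obstacle to be the preservation of strictness under $\cup$ and $\cap$ noted above, and this is substantive only for the staircase-shaped tableaux $T_{n-1}^\ast,T_n^\ast$ in type $D_n$ and $T_n^\ast$ in type $B_n$, where $\Pi_i^\ast$ fails to be totally ordered (cf. Example \ref{ex:-Xi}). For the single-row tableaux $T_i^\ast$ of shape $\eta_i$ the poset $\Pi_i^\ast$ is the chain $(1)\subset(2)\subset\cdots$, so $\lambda\cup\mu$ and $\lambda\cap\mu$ are merely the larger and smaller of $\lambda,\mu$ and the statement is trivial; it is precisely the branching of the staircase posets that makes the modular argument necessary.
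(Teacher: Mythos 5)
Your proof is correct and takes essentially the same route as the paper's: both rest on the inclusion--exclusion identity $\sumb_{\lambda\cup\mu}^\ast(b)+\sumb_{\lambda\cap\mu}^\ast(b)=\sumb_\lambda^\ast(b)+\sumb_\mu^\ast(b)$ together with the maximality of $\sigma_i^\ast(b)$ over $\Pi_i^\ast$. Your explicit verification that $\lambda\cup\mu$ and $\lambda\cap\mu$ remain \emph{strict} partitions (hence lie in $\Pi_i^\ast$, so that the upper bound by $\sigma_i^\ast(b)$ applies) is a point the paper uses tacitly without spelling out, and is a sensible addition.
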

\begin{proof}
    By definition (or the inclusion-exclusion principle), we easily check that
    \[ \sumb_{\lambda\cap\mu}^\ast(b) + \sumb_{\lambda\cup\mu}^\ast(b) = \sumb_\lambda^\ast(b) + \sumb_\mu^\ast(b). \]
    Since $\sigma_i(b)$ is the maximal value among $\sumb_\tau^\ast(b)$ for $\tau \in \Pi_i^\ast$, 
    the left-hand side of the equation is less than or equal to its right-hand side.
    Thus, the fact that the equality holds implies $\sumb_{\lambda\cap\mu}^\ast(b) = \sumb_{\lambda\cup\mu}^\ast(b) = \sigma_i^\ast(b)$. 
\end{proof}
As a corollary, we set $m_i^\ast(b)$ and $M_i^\ast(b)$ to be the intersection and union of partitions $\tau \in \Pi_i^\ast$ such that $\sumb_\tau^\ast(b) = \sigma_i^\ast(b)$, respectively.

\begin{defn} \label{def:crystal star}
    Suppose a finite-dimensional Lie algebra $\mathfrak{g}$ is of type $ABD$. 
    Define the (crystal) maps on $\mathcal{B}(\infty)$ as follows. For $b = (b_{s,t}) \in \mathcal{B}(\infty)$,
    \begin{eqnarray*}
        {\rm wt}^\ast(b) &=& {\rm wt}(b), \\
        \varepsilon_i^\ast(b) &=& \sigma_i^\ast(b), \\
        \varphi_i^\ast(b) &=& \varepsilon_i^\ast(b) + \langle h_i, {\rm wt}(b) \rangle, \\
        \widetilde{e}_i^\ast(b) &=& \begin{cases}
            \displaystyle b - \sum_{(s, t) \in M_i^\ast} {\bf v}(\mathcal{I}_{T_i^\ast}(s,t)) & \mbox{if } \varepsilon_i^\ast(b) >0, \\
            0 & \mbox{otherwise}, \end{cases} \\
        \widetilde{f}_i^\ast(b) &=& b + \sum_{(s, t) \in m_i^\ast} {\bf v}(\mathcal{I}_{T_i^\ast}(s, t)), \\
    \end{eqnarray*}
    where $m_i^\ast = m_i^\ast(b)$ and $M_i^\ast = M_i^\ast(b)$. 
\end{defn}
We refer Definition \ref{def:crystal star} to the additional crystal structure $({\rm wt}^\ast, \widetilde{e}_i^\ast, \widetilde{f}_i^\ast, \varepsilon_i^\ast, \varphi_i^\ast)$ (which will be explained later).

\begin{ex} \label{ex:A3 bicrystal}
    When $\mathfrak{g}$ is of type $A_3$, take an element $b = (2,4,0,5,1,3) \in \mathcal{B}(\infty)$, that is, 
    \[ (b_{3,1},\, b_{2,2},\, b_{2,1},\, b_{1,3},\, b_{1,2},\, b_{1,1}) = (2,4,0,5,1,3). \]
    Then we directly check $\partb_{1,1}(b) = 2$, $\partb_{2,1}(b) = -2$, and $\partb_{3,1}(b) = 2$, 
    which implies that $\sumb_{(1)}(b) = 2, \sumb_{(2)}(b) = 0, \sumb_{(3)}(b) = 2$, and $m_1 = (1)$ and $M_1 = (3)$. Hence, we know
    \begin{eqnarray*}
        \widetilde{e}_1(b) &=& b - \sum_{(s,t) \in (3)} {\bf v}(\mathcal{I}_{T_1}(s,t)) \\
            &=& (1,4,0,5,1,3) \ (=b-{\bf e}_{3,1}), \\
        \widetilde{f}_1(b) &=& b + \sum_{(s,t) \in (1)} {\bf v}(\mathcal{I}_{T_1}(s,t)) \\
            &=& (2,4,0,5,1,4) \ (=b+{\bf e}_{1,1}).
    \end{eqnarray*}
    Note that we obtain
    \begin{eqnarray*}
        && \widetilde{e}_2(b) = (2,3,0,5,1,3), \quad \widetilde{f}_2(b) = (2,4,0,5,2,3), \\
        && \widetilde{e}_3(b) = (2,4,0,4,1,3), \quad \widetilde{f}_3(b) = (2,4,0,6,1,3).
    \end{eqnarray*}
    
    On the other hand, we have $\partb_{1,3}^\ast(b) = 4$, $\partb_{2,2}^\ast(b) = 0$, and $\partb_{3,1}^\ast(b) = -2$, 
    which implies that $\sumb_{(1)}^\ast(b) = 4, \sumb_{(2)}^\ast(b) = 4, \sumb_{(3)}^\ast(b) = 2$, and $m_3^\ast = (1)$ and $M_3^\ast = (2)$. Hence, we know
    \begin{eqnarray*}\
        \widetilde{e}_3^\ast(b) &=& b - \sum_{(s,t) \in (2)} {\bf v}(\mathcal{I}_{T_3^\ast}(s,t)) \\
            &=& (2,3,0,4,2,3) \ (=b - ({\bf e}_{1,3}) - ({\bf e}_{2,2} - {\bf e}_{1,2})), \\
        \widetilde{f}_3^\ast(b) &=& b + \sum_{(s,t) \in (1)} {\bf v}(\mathcal{I}_{T_3^\ast}(s,t)) \\
            &=& (2,4,0,6,1,3) \ (=b + {\bf e}_{1,3}).
    \end{eqnarray*}
    Note that we obtain
    \begin{eqnarray*}
        && \widetilde{e}_1^\ast(b) = (2,4,0,5,1,2), \quad \widetilde{f}_1^\ast(b) = (2,4,0,5,1,4), \\
        && \widetilde{e}_2^\ast(b) = {\bf 0}, \quad \widetilde{f}_2^\ast(b) = (2,4,1,5,2,2).
    \end{eqnarray*}
\end{ex}

\begin{ex} \label{ex:D4 bicrystal}
    When $\mathfrak{g}$ is of type $D_4$, take an element $b = (0,0,0,2,0,1,3,0,2,1,0,0) \in \mathcal{B}(\infty)$, that is, 
    \[ (b_{3,4}, b_{3,3}, b_{3,2}, b_{3,1}, b_{2,4}, b_{2,3}, b_{2,2}, b_{2,1}, b_{1,4}, b_{1,3}, b_{1,2}, b_{1,1}) = (0,0,0,2,0,1,3,0,2,1,0,0). \]
    Then we directly check $\partb_{1,1}(b) = 0$, $\partb_{2,1}(b) = -1$, and $\partb_{3,1}(b) = 2$, 
    which implies that $\sumb_{(1)}(b) = 1, \sumb_{(2)}(b) = 1, \sumb_{(3)}(b) = 2$, and $m_1 = (3)$ and $M_1 = (3)$. Hence, we know
    \begin{eqnarray*}
        \widetilde{e}_1(b) &=& b - \sum_{(s,t) \in (3)} {\bf v}(\mathcal{I}_{T_1}(s,t)) \\
            &=& (0,0,0,1,0,1,3,0,2,1,0,0) \ (=b-{\bf e}_{3,1}), \\
        \widetilde{f}_1(b) &=& b + \sum_{(s,t) \in (3)} {\bf v}(\mathcal{I}_{T_1}(s,t)) \\
            &=& (0,0,0,3,0,1,3,0,2,1,0,0) \ (=b+{\bf e}_{3,1}).
    \end{eqnarray*}
    Note that we obtain
    \begin{eqnarray*}
        && \widetilde{e}_2(b) = {\bf 0}, \quad \widetilde{f}_2(b) = (0,0,0,2,0,1,3,0,2,1,1,0), \\
        && \widetilde{e}_3(b) = (0,0,0,2,0,0,3,0,2,1,0,0), \quad \widetilde{f}_3(b) = (0,0,0,2,0,2,3,0,2,1,0,0), \\
        && \widetilde{e}_4(b) = {\bf 0}, \quad \widetilde{f}_4(b) = (0,0,0,2,1,1,3,0,2,1,0,0).
    \end{eqnarray*}
    
    On the other hand, we have
    \[ \partb_{1,3}^\ast(b) = 1, \, \partb_{2,2}^\ast(b) = 0, \, \partb_{3,1}^\ast(b) = -1, \,\partb_{2,4}^\ast(b) = -1,\, \partb_{3,2}^\ast(b) = 0,\, \partb_{3,3}^\ast(b) = 1, \]
    which implies that
    \begin{eqnarray*}
        && \sumb_{(1)}^\ast(b) = 1, \quad \sumb_{(2)}^\ast(b) = 1, \quad \sumb_{(3)}^\ast(b) = 0, \\
        && \sumb_{(2,1)}^\ast(b) = 0, \quad \sumb_{(3,1)}^\ast(b) = -1, \quad \sumb_{(3,2)}^\ast(b) = -1, \quad \sumb_{(3,2,1)}^\ast(b) = 0,
    \end{eqnarray*}
    and $m_3^\ast = (1)$ and $M_3^\ast = (2)$. Hence, we know
    \begin{eqnarray*}
        \widetilde{e}_3^\ast(b) &=& b - \sum_{(s,t) \in (2)} {\bf v}(\mathcal{I}_{T_3^\ast}(s,t)) \\
            &=& (0,0,0,2,0,1,2,0,2,0,1,0) (=b - ({\bf e}_{1,3}) - ({\bf e}_{2,2} - {\bf e}_{1,2})), \\
        \widetilde{f}_3^\ast(b) &=& b + \sum_{(s,t) \in (1)} {\bf v}(\mathcal{I}_{T_3^\ast}(s,t)) \\
            &=& (0,0,0,2,0,1,3,0,2,2,0,0) (=b + {\bf e}_{1,3}).
    \end{eqnarray*}
    Note that we obtain
    \begin{eqnarray*}
        && \widetilde{e}_1^\ast(b) = {\bf 0}, \quad \widetilde{f}_1^\ast(b) = (0,0,0,2,0,1,3,0,2,1,0,1), \\
        && \widetilde{e}_2^\ast(b) = {\bf 0}, \quad \widetilde{f}_2^\ast(b) = (0,0,0,2,0,1,3,0,2,1,1,0), \\
        && \widetilde{e}_4^\ast(b) = (0,0,0,2,0,1,2,0,1,1,1,0), \quad \widetilde{f}_4^\ast(b) = (0,0,0,2,0,1,3,0,3,1,0,0).
    \end{eqnarray*}
\end{ex}

Now, we are ready to state one of main results in this paper. 
\begin{thm} \label{thm:bicrystal}
    Assume that $\mathfrak{g}$ is of finite $ABD$ type. 
    The set $\mathcal{B}(\infty)$ with two collections of maps appearing in Definition \ref{def:crystal usual} and Definition \ref{def:crystal star} 
    forms a bicrystal and is isomorphic to the Kashiwara's bicrystal $B(\infty)$ as bicrystals.
\end{thm}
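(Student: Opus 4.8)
The plan is to invoke the characterization of $B(\infty)$ as a bicrystal recorded in Proposition~\ref{prop:bicrystal}: if a set carries two crystal structures sharing a common weight map and a single highest weight element ${\bf 1}$, and these satisfy the axioms listed there (boundedness of the $\varepsilon_i,\varepsilon_i^\ast$, the commutation of $\widetilde{e}_i$ with $\widetilde{e}_j^\ast$ for $i\neq j$, and the numerical and interplay conditions governed by $\langle h_i, {\rm wt}(b)\rangle + \varepsilon_i(b) + \varepsilon_i^\ast(b)$), then it is isomorphic to $B(\infty)$ as bicrystals with the second structure matching the star structure. It therefore suffices to verify every such condition for the two collections of maps in Definition~\ref{def:crystal usual} and Definition~\ref{def:crystal star}. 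The first collection requires no new work: as remarked after Definition~\ref{def:crystal usual}, those maps coincide with the crystal maps of \cite[Section 2.4]{NZ97}, so $(\mathcal{B}(\infty), {\rm wt}, \widetilde{e}_i, \widetilde{f}_i, \varepsilon_i, \varphi_i)$ is already known to realize $B(\infty)$ via Theorem~\ref{thm:polyhedral realization}; in particular ${\rm wt}({\bf 1})=0$ and $\varepsilon_i({\bf 1})=0$, and $\widetilde{f}_i$ never vanishes.

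First I would establish that the star maps of Definition~\ref{def:crystal star} form a crystal sharing the same weight map. The equality ${\rm wt}^\ast = {\rm wt}$ holds by definition, and Theorem~\ref{thm:epsilon} together with the asserted identity $\{\sumb_\lambda^\ast : \lambda \in \Pi_i^\ast\} = -\Xi^{(i)}$ identifies $\varepsilon_i^\ast(b)=\sigma_i^\ast(b)$ with the genuine value of Kashiwara's star crystal, so that $\varepsilon_i^\ast({\bf 1})=0$ and $\widetilde{e}_i^\ast({\bf 1})={\bf 0}$. The crystal axioms of Definition~\ref{def:crystal def} for $\widetilde{e}_i^\ast, \widetilde{f}_i^\ast$ then reduce to tracking how the optimal shapes $m_i^\ast(b), M_i^\ast(b)$, and hence the correction vectors $\sum {\bf v}(\mathcal{I}_{T_i^\ast}(s,t))$, vary when one adds or subtracts the increments prescribed by $T_i^\ast$; the point is that $\widetilde{f}_i^\ast$ raises $\varepsilon_i^\ast$ by exactly one, $\widetilde{e}_i^\ast$ lowers it by one and is inverse to $\widetilde{f}_i^\ast$, and $\widetilde{f}_i^\ast$ is everywhere defined. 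These follow from the partition combinatorics of the tableaux $T_i^\ast$, with the three exceptional shapes $T_{n-1}^\ast, T_n^\ast$ in type $D_n$ and $T_n^\ast$ in type $B_n$ handled separately.

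The substantive work, and the main obstacle, lies in the cross-conditions of Proposition~\ref{prop:bicrystal} relating the two structures. For distinct indices $i\neq j$ one must show that $\widetilde{e}_i$ and $\widetilde{e}_j^\ast$ (equivalently $\widetilde{f}_i$ and $\widetilde{f}_j^\ast$) commute; since $\widetilde{e}_i$ alters coordinates selected by the row tableau $T_i$ while $\widetilde{e}_j^\ast$ alters those selected by $T_j^\ast$, I expect to prove commutation by showing that each operator preserves the data $(m_j^\ast, M_j^\ast)$, respectively $(m_i, M_i)$, used by the other. For $i=j$ one must verify the inequality $\langle h_i, {\rm wt}(b)\rangle + \varepsilon_i(b) + \varepsilon_i^\ast(b)\geq 0$, the simultaneous vanishing of $\widetilde{e}_i b$ and $\widetilde{e}_i^\ast b$ when this quantity is $0$, and the compatibility $\widetilde{e}_i\widetilde{e}_i^\ast b = \widetilde{e}_i^\ast\widetilde{e}_i b$ together with the stability of $\varepsilon_i^\ast(\widetilde{e}_i b)$ and $\varepsilon_i(\widetilde{e}_i^\ast b)$ when it is positive. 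This is where the case analysis is heaviest, and where the sliding diamond rule (Definition~\ref{def:SD}) is the essential tool: placing the coordinates of $b$ in the diamond configuration reduces every $\partb_{s,t}(b)$ and $\partb_{s,t}^\ast(b)$ to a local computation on the diamonds $\Diamond_{s,t}$ and $\Diamond_{s,t}^\ast$, and the crystal operators only modify entries along a single line of the configuration. I would therefore organize the argument type-by-type over $A_n, B_n, D_n$, and relegate the longer diamond computations to the Appendix, as the paper indicates.
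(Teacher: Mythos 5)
Your proposal follows essentially the same route as the paper's proof in Section \ref{sec:bicrystal proof}: reduce everything to the characterization in Proposition \ref{prop:bicrystal}, note that the unstarred structure is already the known one from \cite{NZ97}, prove that the star maps of Definition \ref{def:crystal star} form a highest weight crystal, and then verify the interplay conditions ($i\neq j$ commutation and the jump conditions) by tracking how the optimizing data $m_i, M_i, m_i^\ast, M_i^\ast$ change under the operators, with type $A$ treated in full and the type $BD$ lemmas deferred to the appendix. One caution: your paraphrase of the jump-zero condition as ``simultaneous vanishing of $\widetilde{e}_i b$ and $\widetilde{e}_i^\ast b$'' is not the actual condition (it is $\widetilde{f}_i(b)=\widetilde{f}_i^\ast(b)$; e.g.\ $b=\widetilde{f}_i{\bf 1}$ has jump zero yet $\widetilde{e}_i b\neq{\bf 0}$), and the paper's verifications are carried out directly on the linear functions $\partb_{s,t}$, $\sumb_\lambda$ and $\boxed{j}^A_s$ rather than via the sliding diamond rule, but neither point changes the substance of the strategy.
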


The following proposition is the key idea to prove Theorem \ref{thm:bicrystal}, which characterizes $B(\infty)$. 
Note that it has many equivalent statements, which are essentially based on \cite[Proposition 3.2.3]{KS97}, and we use the one in \cite{CT15}. 
We remark (cf. \cite[Definition 2.4]{CT15}) that a highest weight crystal is a crystal $B$ with the highest weight element $b_0$ such that
\begin{itemize}
    \item The highest weight element $b_0$ can be reached from any $b \in B$ by applying a sequence of $\widetilde{e}_i$ for various $i \in I$.
    \item For all $b \in B$ and $i \in I$, we have $\varepsilon_i(b) = \max\{ n \geq 0 : \widetilde{e}_i^n(b) \neq 0 \}$.
\end{itemize}
\begin{prop}[{\cite[Proposition 2.6]{CT15}}] \label{prop:bicrystal}
    Suppose that a set $B$ has two highest weight crystal structures with respect to $({\rm wt}, \varepsilon_i, \varphi_i, \widetilde{e}_i, \widetilde{f}_i)$ and $({\rm wt}, \varepsilon_i^\ast, \varphi_i^\ast, \widetilde{e}_i^\ast, \widetilde{f}_i^\ast)$. 
    Additionally, assume that $B$ satisfies the following conditions. For $i \neq j \in I$ and $b \in B$,
    \begin{enumerate}
        \item $\widetilde{f}_i(b) \neq 0$, $\widetilde{f}_i^\ast(b) \neq 0$.
        \item $B$ has the unique highest weight vector $b_0$ with weight $0$, that is, $\widetilde{e}_i(b_0) = \widetilde{e}_i^\ast(b_0) = 0$ for all $i \in I$.
        \item $\widetilde{f}_i\widetilde{f}_j^\ast(b) = \widetilde{f}_j^\ast\widetilde{f}_i(b)$.
        \item $\varepsilon_i(b) + \varepsilon_i^\ast(b) + \langle h_i, {\rm wt}(b) \rangle \geq 0$.
        \item if $\varepsilon_i(b) + \varepsilon_i^\ast(b) + \langle h_i, {\rm wt}(b) \rangle = 0$, then $\widetilde{f}_i(b) = \widetilde{f}_i^\ast(b)$.
        \item if $\varepsilon_i(b) + \varepsilon_i^\ast(b) + \langle h_i, {\rm wt}(b) \rangle \geq 1$, then $\varepsilon_i^\ast(\widetilde{f}_i(b)) = \varepsilon_i^\ast(b)$ and $\varepsilon_i(\widetilde{f}_i^\ast(b)) = \varepsilon_i(b)$.
        \item if $\varepsilon_i(b) + \varepsilon_i^\ast(b) + \langle h_i, {\rm wt}(b) \rangle \geq 2$, then $\widetilde{f}_i^\ast \widetilde{f}_i(b) = \widetilde{f}_i \widetilde{f}_i^\ast(b)$.
    \end{enumerate}
    Then $B$ is isomorphic to Kashiwara's crystal $B(\infty)$ as bicrystals.
\end{prop}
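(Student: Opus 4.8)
The plan is to prove this abstract characterization by reconstructing the bicrystal $B(\infty)$ from the data of $B$ via the iterated Kashiwara embeddings, following the strategy of Kashiwara--Saito. The governing observation is that conditions (4)--(7), together with the commutation (3), are \emph{symmetric} under interchanging the two crystal structures $(\widetilde{e}_i,\widetilde{f}_i,\varepsilon_i)$ and $(\widetilde{e}_i^\ast,\widetilde{f}_i^\ast,\varepsilon_i^\ast)$: for instance, reading (3) for the pair $(j,i)$ yields $\widetilde{f}_i^\ast\widetilde{f}_j=\widetilde{f}_j\widetilde{f}_i^\ast$, and (4)--(7) are manifestly unchanged by the swap. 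Hence any statement proved for one structure transfers verbatim to the other, and in particular Kashiwara's own bicrystal $B(\infty)$ satisfies (1)--(7), which is what makes the comparison meaningful.

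First I would promote each $\ast$-operator to a tensor embedding. For fixed $i\in I$ consider
\[ \Psi_i^B(b)=\widetilde{e}_i^{\ast\max}(b)\otimes(-\varepsilon_i^\ast(b))_i\in B\otimes{\bf B}_i. \]
I would check that $\Psi_i^B$ is a strict embedding of the \emph{unstarred} crystals, with image $\{\,b'\otimes(-m)_i:\varepsilon_i^\ast(b')=0,\ m\in\mathbb{Z}_+\}$. Injectivity and the image description are formal once one knows that $b\mapsto(\widetilde{e}_i^{\ast\max}(b),\varepsilon_i^\ast(b))$ is a bijection onto $\{\varepsilon_i^\ast=0\}\times\mathbb{Z}_+$, which follows from (1) and the fact that the $\ast$-structure is a highest weight crystal. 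Commutation with $\widetilde{e}_i,\widetilde{f}_i$ is governed by the tensor-product rule comparing $\varphi_i$ of the left factor against $\varepsilon_i$ of ${\bf B}_i$; here conditions (4), (6), (7) supply exactly the inequality and the stabilities of $\varepsilon_i^\ast$ under $\widetilde{f}_i$ needed to see that $\Psi_i^B$ is strict. This is the same formula and image that the preliminaries record for $\Psi_i:B(\infty)\to B(\infty)\otimes{\bf B}_i$.

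Next I would build the bicrystal isomorphism $\Phi:B\to B(\infty)$ by induction on the height of $-{\rm wt}(b)$. Set $\Phi(b_0)={\bf 1}$. For $b\neq b_0$, the $\ast$-structure being a highest weight crystal (with the same vertex $b_0$ by (2)) forces $\varepsilon_i^\ast(b)>0$ for some $i$; then $\widetilde{e}_i^\ast(b)$ has strictly larger weight, $\Phi(\widetilde{e}_i^\ast b)$ is already defined, and I set $\Phi(b):=\widetilde{f}_i^\ast\big(\Phi(\widetilde{e}_i^\ast b)\big)$. Equivalently, $\Phi$ is the unique map intertwining $\Psi_i^B$ with the Kashiwara $\Psi_i$ of $B(\infty)$. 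Since (1) makes every $\widetilde{f}_i^\ast$ everywhere defined, once $\Phi$ is well defined it automatically commutes with all $\widetilde{f}_i^\ast$ and hence all $\widetilde{e}_i^\ast$; compatibility with $\widetilde{e}_i,\widetilde{f}_i$ then follows from Step 1, because $\Phi\otimes{\rm id}$ intertwines $\Psi_i^B$ with $\Psi_i$ and these embeddings reflect the unstarred operators. Running the symmetric construction with the roles of $B$ and $B(\infty)$ exchanged produces a two-sided inverse, so $\Phi$ is a bijection, and matching of ${\rm wt},\varepsilon_i,\varepsilon_i^\ast$ (using (1),(4),(6)) upgrades it to an isomorphism of bicrystals.

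The main obstacle is the well-definedness in the inductive step: when two distinct indices $i\neq j$ both satisfy $\varepsilon_i^\ast(b),\varepsilon_j^\ast(b)>0$, the two recursive values of $\Phi(b)$ must agree, and more generally any two navigation paths from $b$ back to $b_0$ through $\widetilde{e}^\ast$-arrows must produce the same image. Reducing to elementary moves, this amounts to matching the relations among the operators $\widetilde{f}_i^\ast$ used to build $\Phi$ with the corresponding relations inside $B(\infty)$. For $i\neq j$ the mixed commutation $\widetilde{f}_i^\ast\widetilde{f}_j=\widetilde{f}_j\widetilde{f}_i^\ast$ is condition (3) (read for $(j,i)$), letting one slide distinct-index $\ast$-operators past one another; for a repeated index the coincidence is forced by the same-index relations, namely (5) in the degenerate case $\varepsilon_i(b)+\varepsilon_i^\ast(b)+\langle h_i,{\rm wt}(b)\rangle=0$ and (7) in the stable case $\geq 2$, with (6) guaranteeing that the boundary value $\varepsilon_i^\ast$ does not jump in between. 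Verifying that these local relations are enough to identify all paths---that is, that (1)--(7) generate precisely the coincidences holding in $B(\infty)$---is the heart of the argument and is exactly where the characterization acquires its force.
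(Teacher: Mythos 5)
You should first note that the paper contains no proof of this proposition: it is quoted verbatim from \cite[Proposition 2.6]{CT15} (with the remark that all such statements go back to \cite[Proposition 3.2.3]{KS97}) and is used as a black box, so the comparison here is against the standard argument in those references. Your Step 1 --- showing $\Psi_i^B(b)=\widetilde{e}_i^{\ast\max}(b)\otimes(-\varepsilon_i^\ast(b))_i$ is a strict embedding of unstarred crystals, with conditions (4)--(7) feeding the tensor-product rule --- is indeed the correct opening move (though note that compatibility with $\widetilde{e}_j,\widetilde{f}_j$ for $j\neq i$ rests on (3), not on (4), (6), (7), which only govern the index $i$ itself).

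The genuine gap is in your inductive construction of $\Phi$, and your proposed repair of it is unsound. You define $\Phi(b):=\widetilde{f}_i^\ast\bigl(\Phi(\widetilde{e}_i^\ast b)\bigr)$ and claim that path-independence over the choice of $i$ reduces to ``sliding distinct-index $\ast$-operators past one another'' via condition (3). But (3) is a \emph{mixed} commutation, interchanging an unstarred $\widetilde{f}_i$ with a starred $\widetilde{f}_j^\ast$; it says nothing about $\widetilde{f}_i^\ast\widetilde{f}_j^\ast$ versus $\widetilde{f}_j^\ast\widetilde{f}_i^\ast$, and these do \emph{not} commute in $B(\infty)$: already in type $A_2$ one has $\widetilde{f}_1^\ast\widetilde{f}_2^\ast({\bf 1})\neq\widetilde{f}_2^\ast\widetilde{f}_1^\ast({\bf 1})$, since these are the two distinct elements of weight $-\alpha_1-\alpha_2$. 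The hypotheses (1)--(7) contain no relations among the starred operators alone, so your ``elementary moves'' cannot close, and you yourself leave the confluence step (``the heart of the argument'') unverified. The Kashiwara--Saito proof avoids this recursion entirely: iterate the strict embeddings along a sequence $\iota=(\dots,i_2,i_1)$ containing each index infinitely often, obtaining a strict embedding of $B$ into $\cdots\otimes{\bf B}_{i_2}\otimes{\bf B}_{i_1}$ that sends $b_0$ to $(\dots,0,0)$ (condition (2) plus the star highest-weight property guarantee each element stabilizes after finitely many steps). Since the image of a strict embedding is closed under all operators and the unstarred structure of $B$ is highest weight, the image is exactly the connected component of $(\dots,0,0)$ --- the same component that the Kashiwara embedding identifies with $B(\infty)$ --- so $B\cong B(\infty)$ as unstarred crystals with no choices made; the star data is then matched by reading $\varepsilon_i^\ast$ off the tensor coordinates. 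Restructuring your argument along these lines would eliminate the well-definedness problem rather than confront it.
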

Thus, we complete the proof of Theorem \ref{thm:bicrystal} by checking all above conditions. 
The proof will be given in Section \ref{sec:bicrystal proof}.

\subsection{A sliding diamond rule}
To easily compute values in Definition \ref{def:crystal usual} and Definition \ref{def:crystal star}, 
we introduce a combinatorial rule for $\mathcal{B}(\infty)$, which we call a sliding diamond rule. 
Briefly speaking, we will record each component of $\mathcal{B}(\infty)$ under some rules and read some values from it. 
For this, we define the following diamonds (a parallelogram or its slanted one).
    \begin{itemize}
        \item $\Diamond_{s,t}$ : the diamond containing $b_{s,t}, b_{s+1, t}$, $b_{s,k}$ for $k > t$ with $\langle h_k, \alpha_t \rangle \neq 0$,
        and $b_{s+1, k}$ for $k < t$ with $\langle h_k, \alpha_t \rangle \neq 0$
        \item $\Diamond_{s,t}^\ast$ : the diamond containing $b_{s,t}, b_{s-1,t}$, $b_{s-1,k}$ for $k > t$ with $\langle h_k, \alpha_t \rangle \neq 0$, 
        and $b_{s, k}$ for $k < t$ with $\langle h_k, \alpha_t \rangle \neq 0$
    \end{itemize}
In this case, we write $(u, v) \in \Diamond_{s,t}$ or $(u, v) \in \Diamond_{s,t}^\ast$ if $b_{u, v}$ is contained in the diamond.
\begin{defn}[The sliding diamond rule] \label{def:SD}
    Take $b = (b_{s,t}) \in \mathcal{B}(\infty)$. 
    \begin{enumerate}
        \item Put $b_{s,t}$ in a plane $\mathbb{Z}^2$ as follows;
        \begin{enumerate}
            \item put $b_{s,t}$ at the point $(2s+t-3, t)$ for $(s, t) \in \mathcal{I}_n^A$,
            \item put $b_{s,t}$ at the point $(2s+t-3, t)$ for $(s, t) \in \mathcal{I}_n^B$,
            \item put $b_{s,t}$ at the point $(2s+t-3, t)$ for $(s, t) \in \mathcal{I}_n^D$ with $t \neq n$, 
            $b_{s,n}$ at the point $(2s+n-4, n-2)$ for $(s, n) \in \mathcal{I}_n^D$.
        \end{enumerate}
        \item Consider $b_{u, v}$ contained in $\Diamond_{s,t}$ and $\Diamond_{s,t}^\ast$, and assign the coefficient $\epsilon_{u,v}$ to each $b_{u,v}$ as follows;
        for any $u$, 
        \[ \epsilon_{u,v} = \begin{cases}
            1 & \mbox{if }v = t, \\
            \langle h_t, \alpha_v \rangle & \mbox{otherwise.}
        \end{cases} \]
        \item Obtain the $\epsilon$-weighted sum of $b_{u,v}$ contained in $\Diamond_{s,t}$ or $\Diamond_{s,t}^\ast$, 
        which results in
        \[ \sum_{(u, v) \in \Diamond_{s,t}} \epsilon_{u,v}b_{u,v} = \partb_{s,t}(b), \quad \sum_{(u, v) \in \Diamond_{s,t}^\ast} \epsilon_{u,v}b_{u,v} = \partb_{s,t}^\ast(b). \]
    \end{enumerate}
\end{defn}
Note that we might place $b_{u,v}$ in other positions by shifting, rotating, or flipping.
\begin{ex}[The configuration of $b_{s,t}$] \label{ex:configuration} \hfill

    (1) The left triangle is the configuration of $b_{s,t}$ for type $A_3$ given in Definition \ref{def:SD}.
    
    \[ \begin{tikzpicture}
        \node at (0,1) {\large$b_{1,1}$};
        \node at (2,1) {\large$b_{2,1}$};
        \node at (4,1) {\large$b_{3,1}$};
        \node at (1,2) {\large$b_{1,2}$};
        \node at (3,2) {\large$b_{2,2}$};
        \node at (2,3) {\large$b_{1,3}$};
        \draw[dotted, thick] (2, 3.6) -- (-0.8, 0.7) -- (4.8, 0.7) -- (2, 3.6);
        \draw[dotted] (0.3, 1.3) -- (0.7, 1.7);
        \draw[dotted] (1.3, 2.3) -- (1.7, 2.7);
        \draw[dotted] (2.3, 1.3) -- (2.7, 1.7);

        \node[shift = {(7,0)}] at (0,1) {\large$b_{1,3}$};
        \node[shift = {(7,0)}] at (2,1) {\large$b_{1,2}$};
        \node[shift = {(7,0)}] at (4,1) {\large$b_{1,1}$};
        \node[shift = {(7,0)}] at (1,2) {\large$b_{2,2}$};
        \node[shift = {(7,0)}] at (3,2) {\large$b_{2,1}$};
        \node[shift = {(7,0)}] at (2,3) {\large$b_{3,1}$};
        \draw[dotted, shift = {(7,0)}, thick] (2, 3.6) -- (-0.8, 0.7) -- (4.8, 0.7) -- (2, 3.6);
        \draw[dotted] (7.5, 1) -- (8.5, 1);
        \draw[dotted] (9.5, 1) -- (10.5, 1);
        \draw[dotted] (8.5, 2) -- (9.5, 2);
    \end{tikzpicture} \]
    Note that the right triangle is obtained by rotating the left one. This configuration is useful in Section \ref{subsec:extended diamond}.
    
    (2) The left diagram is the configuration of $b_{s,t}$ for type $D_4$. 
    The right diagram is the configuration of $b_{s,t}$ for type $B_3$.
    \[ \begin{tikzpicture}
        \node at (0, 1) {\large$b_{1,1}$};
        \node at (2, 1) {\large$b_{2,1}$};
        \node at (4, 1) {\large$b_{3,1}$};
        \node at (1, 2) {\large$b_{1,2}$};
        \node at (3, 2) {\large$b_{2,2}$};
        \node at (5, 2) {\large$b_{3,2}$};
        \node at (2, 3) {\large$b_{1,3}$};
        \node at (4, 3) {\large$b_{2,3}$};
        \node at (6, 3) {\large$b_{3,3}$};
        \node at (2, 2) {\large$b_{1,4}$};
        \node at (4, 2) {\large$b_{2,4}$};
        \node at (6, 2) {\large$b_{3,4}$};

        \draw[thick, dotted] (-0.9, 0.7) -- (1.7, 3.4) -- (7.9, 3.4) -- (5.2, 0.7) -- (-0.9, 0.7);
        \draw[dotted] (0.25, 1.25) -- (0.75, 1.75);
        \draw[dotted] (1.2, 2.25) -- (1.7, 2.75);
        \draw[dotted] (1.4, 2) -- (1.6, 2);
        \draw[dotted] (2.25, 1.25) -- (2.75, 1.75);
        \draw[dotted] (3.2, 2.25) -- (3.7, 2.75);
        \draw[dotted] (3.4, 2) -- (3.6, 2);
        \draw[dotted] (4.25, 1.25) -- (4.75, 1.75);
        \draw[dotted] (5.2, 2.25) -- (5.7, 2.75);
        \draw[dotted] (5.4, 2) -- (5.6, 2);

        \node[shift = {(7.3,0)}] at (0, 1) {\large$b_{1,1}$};
        \node[shift = {(7.3,0)}] at (1.6, 1) {\large$b_{2,1}$};
        \node[shift = {(7.3,0)}] at (3.2, 1) {\large$b_{3,1}$};
        \node[shift = {(7.3,0)}] at (0.8, 2) {\large$b_{1,2}$};
        \node[shift = {(7.3,0)}] at (2.4, 2) {\large$b_{2,2}$};
        \node[shift = {(7.3,0)}] at (4.0, 2) {\large$b_{3,2}$};
        \node[shift = {(7.3,0)}] at (1.6, 3) {\large$b_{1,3}$};
        \node[shift = {(7.3,0)}] at (3.2, 3) {\large$b_{2,3}$};
        \node[shift = {(7.3,0)}] at (4.8, 3) {\large$b_{3,3}$};

        \draw[dotted, thick, shift = {(7.3, 0)}] (-0.8, 0.7) -- (1.3, 3.4) -- (5.7, 3.4) -- (3.6, 0.7) -- (-0.8, 0.7);
        \draw[dotted, shift = {(7.3, 0)}] (0.2, 1.3) -- (0.6, 1.7);
        \draw[dotted, shift = {(7.3, 0)}] (1.1, 2.3) -- (1.5, 2.7);
        \draw[dotted, shift = {(7.3, 0)}] (1.8, 1.3) -- (2.2, 1.7);
        \draw[dotted, shift = {(7.3, 0)}] (2.7, 2.3) -- (3.1, 2.7);
        \draw[dotted, shift = {(7.3, 0)}] (3.4, 1.3) -- (3.8, 1.7);
        \draw[dotted, shift = {(7.3, 0)}] (4.3, 2.3) -- (4.7, 2.7);
    \end{tikzpicture} \]
\end{ex}

As we can calculate $\partb_{s,t}(b)$ and $\partb^\ast_{s,t}(b)$ (and $\sumb_\lambda(b)$ and $\sumb_\mu^\ast(b)$) from the sliding diamond rule, 
we can obtain the maps as given in Definition \ref{def:crystal usual} and Definition \ref{def:crystal star}. 
\begin{ex} \label{ex:A3 diamond}
    Let $b \in \mathcal{B}(\infty)$ be the element given in Example \ref{ex:A3 bicrystal} 
    with the following configuration of $b_{s,t}$ (see Example \ref{ex:configuration}~(1)).
    \[ \begin{tikzpicture}
        \node at (0,1) {$b_{1,1} = 3$};
        \node at (2,1) {$b_{2,1} = 0$};
        \node at (4,1) {$b_{3,1} = 2$};
        \node at (1,2) {$b_{1,2} = 1$};
        \node at (3,2) {$b_{2,2} = 4$};
        \node at (2,3) {$b_{1,3} = 5$};
        \draw[dotted, thick] (2, 4) -- (-1.5, 0.7) -- (5.5, 0.7) -- (2, 4);
    \end{tikzpicture} \]
    
    To find values $\sumb_\lambda(b)$ for $\lambda \in \Pi_1$, consider diamonds $\Diamond_{i, 1}$ for $1 \leq i \leq 3$.
    Note that we record $\epsilon_{u,v}b_{u,v}$ instead of $b_{u,v}$ in each diamond.
    \[ \begin{tikzpicture}
        \node at (2, 0) {$2$};
        \node at (1, 0) {$+0$};
        \node at (0, 0) {$+3$};
        \node at (1.5, 1) {$4$};
        \node at (0.5, 1) {$-1$};
        \node at (1, 2) {$5$};
    
        \draw[dotted] (-0.5, -0.3) -- (2.5, -0.3) -- (1, 2.5) -- (-0.5, -0.3);
        \draw[very thick] (0-0.5, 0) -- (0.5, 1.0+0.5) -- (1.0+0.5, 0) -- (0.5, -1-0.5) -- (0-0.5, 0);
        \node at (-0.3, 2) {\LARGE$\Diamond_{1,1}$};

        \node[shift = {(4, 0)}] at (2, 0) {$+2$};
        \node[shift = {(4, 0)}] at (1, 0) {$+0$};
        \node[shift = {(4, 0)}] at (0, 0) {$3$};
        \node[shift = {(4, 0)}] at (1.5, 1) {$-4$};
        \node[shift = {(4, 0)}] at (0.5, 1) {$1$};
        \node[shift = {(4, 0)}] at (1, 2) {$5$};
    
        \draw[dotted, shift = {(4,0)}] (-0.5, -0.3) -- (2.5, -0.3) -- (1, 2.5) -- (-0.5, -0.3);
        \draw[very thick, shift = {(5, 0)}] (0-0.5, 0) -- (0.5, 1.0+0.5) -- (1.0+0.5, 0) -- (0.5, -1-0.5) -- (0-0.5, 0);
        \node at (4-0.3, 2) {\LARGE$\Diamond_{2,1}$};

        \node[shift = {(8, 0)}] at (2, 0) {$+2$};
        \node[shift = {(8, 0)}] at (1, 0) {$0$};
        \node[shift = {(8, 0)}] at (0, 0) {$3$};
        \node[shift = {(8, 0)}] at (1.5, 1) {$4$};
        \node[shift = {(8, 0)}] at (0.5, 1) {$1$};
        \node[shift = {(8, 0)}] at (1, 2) {$5$};
    
        \draw[dotted, shift = {(8,0)}] (-0.5, -0.3) -- (2.5, -0.3) -- (1, 2.5) -- (-0.5, -0.3);
        \draw[very thick, shift = {(10, 0)}] (0-0.5, 0) -- (0.5, 1.0+0.5) -- (1.0+0.5, 0) -- (0.5, -1-0.5) -- (0-0.5, 0);
        \node at (8-0.3, 2) {\LARGE$\Diamond_{3,1}$};
    \end{tikzpicture} \]
    Then we have
    \[ \partb_{1,1}(b) = 3+0-1 = 2,\ \partb_{2,1}(b) = 0+2-4 = -2,\ \partb_{3,1}(b) = 2, \]
    which implies $\varepsilon_1(b) = 2, m_1(b) = (1), M_1(b) = (3)$. 
    This coincides with Example \ref{ex:A3 bicrystal}.

    On the other hand, to find values $\sumb_\mu^\ast(b)$ for $\mu \in \Pi_3^\ast$, consider diamonds $\Diamond_{i, 4-i}^\ast$ for $1 \leq i \leq 3$ 
    with the recording of $\epsilon_{u,v}b_{u,v}$ in each diamond. 
    \[ \begin{tikzpicture}
        \node at (2, 0) {$2$};
        \node at (1, 0) {$0$};
        \node at (0, 0) {$3$};
        \node at (1.5, 1) {$4$};
        \node at (0.5, 1) {$-1$};
        \node at (1, 2) {$+5$};
    
        \draw[dotted] (-0.5, -0.3) -- (2.5, -0.3) -- (1, 2.5) -- (-0.5, -0.3);
        \draw[very thick, shift = {(0,2)}] (0-0.5, 0) -- (0.5, 1.0+0.5) -- (1.0+0.5, 0) -- (0.5, -1-0.5) -- (0-0.5, 0);
        \node at (-1.1, 2) {\LARGE$\Diamond_{1,3}^\ast$};

        \node[shift = {(4, 0)}] at (2, 0) {$2$};
        \node[shift = {(4, 0)}] at (1, 0) {$-0$};
        \node[shift = {(4, 0)}] at (0, 0) {$3$};
        \node[shift = {(4, 0)}] at (1.5, 1) {$+4$};
        \node[shift = {(4, 0)}] at (0.5, 1) {$+1$};
        \node[shift = {(4, 0)}] at (1, 2) {$-5$};
    
        \draw[dotted, shift = {(4,0)}] (-0.5, -0.3) -- (2.5, -0.3) -- (1, 2.5) -- (-0.5, -0.3);
        \draw[very thick, shift = {(4.5,1)}] (0-0.5, 0) -- (0.5, 1.0+0.5) -- (1.0+0.5, 0) -- (0.5, -1-0.5) -- (0-0.5, 0);
        \node at (4-0.3, 2) {\LARGE$\Diamond_{2,2}^\ast$};

        \node[shift = {(8, 0)}] at (2, 0) {$+2$};
        \node[shift = {(8, 0)}] at (1, 0) {$+0$};
        \node[shift = {(8, 0)}] at (0, 0) {$3$};
        \node[shift = {(8, 0)}] at (1.5, 1) {$-4$};
        \node[shift = {(8, 0)}] at (0.5, 1) {$1$};
        \node[shift = {(8, 0)}] at (1, 2) {$5$};
    
        \draw[dotted, shift = {(8,0)}] (-0.5, -0.3) -- (2.5, -0.3) -- (1, 2.5) -- (-0.5, -0.3);
        \draw[very thick, shift = {(9, 0)}] (0-0.5, 0) -- (0.5, 1.0+0.5) -- (1.0+0.5, 0) -- (0.5, -1-0.5) -- (0-0.5, 0);
        \node at (8-0.3, 2) {\LARGE$\Diamond_{3,1}^\ast$};
    \end{tikzpicture} \]
    Then we have
    \[ \partb_{1,3}^\ast(b) = 5-1 = 4,\ \partb_{2,2}^\ast(b) = 1+4-5-0 = 0,\ \partb_{3,1}^\ast(b) = +2+0-4 = -2, \]
    which implies $\varepsilon_3^\ast(b) = 4, m_3^\ast(b) = (1), M_3^\ast(b) = (2)$. 
    This coincides with Example \ref{ex:A3 bicrystal}.
\end{ex}

\begin{ex}
    Let $b \in \mathcal{B}(\infty)$ be the element appearing in Example \ref{ex:D4 bicrystal} with 
    the following configuration of $b_{s,t}$ (see Example \ref{ex:configuration}~(2)).
    \[ \begin{tikzpicture}[xscale=1.1]
        \node at (0,1) {$b_{1,1} = 0$};
        \node at (3,1) {$b_{2,1} = 0$};
        \node at (6,1) {$b_{3,1} = 2$};
        \node at (1.5,2) {$b_{1,2} = 0$};
        \node at (4.5,2) {$b_{2,2} = 3$};
        \node at (7.5,2) {$b_{3,2} = 0$};
        \node at (3,3) {$b_{1,3} = 1$};
        \node at (6,3) {$b_{2,3} = 1$};
        \node at (9,3) {$b_{3,3} = 0$};
        \node at (3,2) {$b_{1,4} = 2$};
        \node at (6,2) {$b_{2,4} = 0$};
        \node at (9,2) {$b_{3,4} = 0$};
        \draw[dotted, thick] (-1.5, 0.7) -- (9, 0.7) -- (11, 3.3) -- (0.5, 3.3) -- (-1.5, 0.7);
    \end{tikzpicture} \]
    
    To find values $\sumb_\lambda(b)$ for $\lambda \in \Pi_1$, consider diamonds $\Diamond_{i, 1}$ for $1 \leq i \leq 3$
    with the recording of $\epsilon_{u,v}b_{u,v}$ in each diamond.
    \[ \begin{tikzpicture}
        \node at (0, 0) {$+0$};
        \node at (1.6, 0) {$+0$};
        \node at (3.2, 0) {$2$};
        \node at (0.8, 1) {$-0$};
        \node at (2.4, 1) {$3$};
        \node at (4.0, 1) {$0$};
        \node at (1.6, 2) {$1$};
        \node at (3.2, 2) {$1$};
        \node at (4.8, 2) {$0$};
        \node at (1.6, 1) {$2$};
        \node at (3.2, 1) {$0$};
        \node at (4.8, 1) {$0$};
    
        \draw[dotted] (-0.5, -0.3) -- (4.2, -0.3) -- (6.0, 2.3) -- (1.3, 2.3) -- (-0.5, -0.3);
        \draw[very thick] (0-0.4, 0) -- (0.8, 1.0+0.4) -- (1.6+0.4, 0) -- (0.8, -1.0-0.4) -- (0-0.4, 0);
        \node at (0+0.2, 2) {\LARGE$\Diamond_{1,1}$};

        \node[shift = {(6.5,0)}] at (0, 0) {$0$};
        \node[shift = {(6.5,0)}] at (1.6, 0) {$+0$};
        \node[shift = {(6.5,0)}] at (3.2, 0) {$+2$};
        \node[shift = {(6.5,0)}] at (0.8, 1) {$0$};
        \node[shift = {(6.5,0)}] at (2.4, 1) {$-3$};
        \node[shift = {(6.5,0)}] at (4.0, 1) {$0$};
        \node[shift = {(6.5,0)}] at (1.6, 2) {$1$};
        \node[shift = {(6.5,0)}] at (3.2, 2) {$1$};
        \node[shift = {(6.5,0)}] at (4.8, 2) {$0$};
        \node[shift = {(6.5,0)}] at (1.6, 1) {$2$};
        \node[shift = {(6.5,0)}] at (3.2, 1) {$0$};
        \node[shift = {(6.5,0)}] at (4.8, 1) {$0$};
    
        \draw[dotted, shift = {(6.5,0)}] (-0.5, -0.3) -- (4.2, -0.3) -- (6.0, 2.3) -- (1.3, 2.3) -- (-0.5, -0.3);
        \draw[very thick, shift = {(6.5+1.6,0)}] (0-0.4, 0) -- (0.8, 1.0+0.4) -- (1.6+0.4, 0) -- (0.8, -1.0-0.4) -- (0-0.4, 0);
        \node at (6.5+0.3, 2) {\LARGE$\Diamond_{2,1}$};
    
        \node[shift = {(2, -3.8)}] at (0, 0) {$0$};
        \node[shift = {(2, -3.8)}] at (1.6, 0) {$0$};
        \node[shift = {(2, -3.8)}] at (3.2, 0) {$+2$};
        \node[shift = {(2, -3.8)}] at (0.8, 1) {$0$};
        \node[shift = {(2, -3.8)}] at (2.4, 1) {$3$};
        \node[shift = {(2, -3.8)}] at (4.0, 1) {$-0$};
        \node[shift = {(2, -3.8)}] at (1.6, 2) {$1$};
        \node[shift = {(2, -3.8)}] at (3.2, 2) {$1$};
        \node[shift = {(2, -3.8)}] at (4.8, 2) {$0$};
        \node[shift = {(2, -3.8)}] at (1.6, 1) {$2$};
        \node[shift = {(2, -3.8)}] at (3.2, 1) {$0$};
        \node[shift = {(2, -3.8)}] at (4.8, 1) {$0$};
    
        \draw[dotted, shift = {(2, -3.8)}] (-0.5, -0.3) -- (4.2, -0.3) -- (6.0, 2.3) -- (1.3, 2.3) -- (-0.5, -0.3);
        \draw[very thick, shift = {(3.2+2, 0-3.8)}] (0-0.4, 0) -- (0.8, 1.0+0.4) -- (1.6+0.4, 0) -- (0.8, -1.0-0.4) -- (0-0.4, 0);
        \node[shift = {(2, -3.8)}] at (0+0.2, 2) {\LARGE$\Diamond_{3,1}$};
    \end{tikzpicture} \]
    Then we have
    \[ \partb_{1,1}(b) = 0-0 = 0,\ \partb_{2,1}(b) = 0+2-3 = -1,\ \partb_{3,1}(b) = 2, \]
    which implies $\varepsilon_1(b) = 2, m_1(b) = M_1(b) = (3)$. 
    This coincides with Example \ref{ex:D4 bicrystal}.

    On the other hand, to find values $\sumb_\mu^\ast(b)$ for $\mu \in \Pi_3^\ast$, consider following six diamonds $\Diamond_{s, t}^\ast$ 
    with the recording of $\epsilon_{u,v}b_{u,v}$ in each diamond.
    \[ \begin{tikzpicture}
        \node at (0, 0) {$0$};
        \node at (1.6, 0) {$0$};
        \node at (3.2, 0) {$2$};
        \node at (0.8, 1) {$-0$};
        \node at (2.4, 1) {$3$};
        \node at (4.0, 1) {$0$};
        \node at (1.6, 2) {$+1$};
        \node at (3.2, 2) {$1$};
        \node at (4.8, 2) {$0$};
        \node at (1.6, 1) {$2$};
        \node at (3.2, 1) {$0$};
        \node at (4.8, 1) {$0$};
    
        \draw[dotted] (-0.5, -0.3) -- (4.2, -0.3) -- (6.0, 2.3) -- (1.3, 2.3) -- (-0.5, -0.3);
        \draw[very thick, shift = {(0,2)}] (0-0.4, 0) -- (0.8, 1.0+0.4) -- (1.6+0.4, 0) -- (0.8, -1.0-0.4) -- (0-0.4, 0);
        \node at (-0.3, 3) {\LARGE$\Diamond_{1,3}^\ast$};

        \node[shift = {(6.5,0)}] at (0, 0) {$0$};
        \node[shift = {(6.5,0)}] at (1.6, 0) {$-0$};
        \node[shift = {(6.5,0)}] at (3.2, 0) {$2$};
        \node[shift = {(6.5,0)}] at (0.8, 1) {$+0$};
        \node[shift = {(6.5,0)}] at (2.4, 1) {$+3$};
        \node[shift = {(6.5,0)}] at (4.0, 1) {$0$};
        \node[shift = {(6.5,0)}] at (1.6, 2) {$-1$};
        \node[shift = {(6.5,0)}] at (3.2, 2) {$1$};
        \node[shift = {(6.5,0)}] at (4.8, 2) {$0$};
        \node[shift = {(6.5,0)}] at (1.6, 1) {$-2$};
        \node[shift = {(6.5,0)}] at (3.2, 1) {$0$};
        \node[shift = {(6.5,0)}] at (4.8, 1) {$0$};
    
        \draw[dotted, shift = {(6.5,0)}] (-0.5, -0.3) -- (4.2, -0.3) -- (6.0, 2.3) -- (1.3, 2.3) -- (-0.5, -0.3);
        \draw[very thick, shift = {(6.5+0.8,1)}] (0-0.4, 0) -- (0.8, 1.0+0.4) -- (1.6+0.4, 0) -- (0.8, -1.0-0.4) -- (0-0.4, 0);
        \node at (6.5+0.3, 2) {\LARGE$\Diamond_{2,2}^\ast$};
    
        \node[shift = {(0, -3.5)}] at (0, 0) {$0$};
        \node[shift = {(0, -3.5)}] at (1.6, 0) {$+0$};
        \node[shift = {(0, -3.5)}] at (3.2, 0) {$+2$};
        \node[shift = {(0, -3.5)}] at (0.8, 1) {$0$};
        \node[shift = {(0, -3.5)}] at (2.4, 1) {$-3$};
        \node[shift = {(0, -3.5)}] at (4.0, 1) {$0$};
        \node[shift = {(0, -3.5)}] at (1.6, 2) {$1$};
        \node[shift = {(0, -3.5)}] at (3.2, 2) {$1$};
        \node[shift = {(0, -3.5)}] at (4.8, 2) {$0$};
        \node[shift = {(0, -3.5)}] at (1.6, 1) {$2$};
        \node[shift = {(0, -3.5)}] at (3.2, 1) {$0$};
        \node[shift = {(0, -3.5)}] at (4.8, 1) {$0$};
    
        \draw[dotted, shift = {(0, -3.5)}] (-0.5, -0.3) -- (4.2, -0.3) -- (6.0, 2.3) -- (1.3, 2.3) -- (-0.5, -0.3);
        \draw[very thick, shift = {(1.6+0,0-3.5)}] (0-0.4, 0) -- (0.8, 1.0+0.4) -- (1.6+0.4, 0) -- (0.8, -1.0-0.4) -- (0-0.4, 0);
        \node[shift = {(0, -3.5)}] at (0+0.2, 2) {\LARGE$\Diamond_{3,1}^\ast$};

        \node[shift = {(6.5,-3.5)}] at (0, 0) {$0$};
        \node[shift = {(6.5,-3.5)}] at (1.6, 0) {$0$};
        \node[shift = {(6.5,-3.5)}] at (3.2, 0) {$2$};
        \node[shift = {(6.5,-3.5)}] at (0.8, 1) {$0$};
        \node[shift = {(6.5,-3.5)}] at (2.4, 1) {$-3$};
        \node[shift = {(6.5,-3.5)}] at (4.0, 1) {$0$};
        \node[shift = {(6.5,-3.5)}] at (1.6, 2) {$1$};
        \node[shift = {(6.5,-3.5)}] at (3.2, 2) {$1$};
        \node[shift = {(6.5,-3.5)}] at (4.8, 2) {$0$};
        \node[shift = {(6.5,-3.5)}] at (1.6, 1) {$+2$};
        \node[shift = {(6.5,-3.5)}] at (3.2, 1) {$+0$};
        \node[shift = {(6.5,-3.5)}] at (4.8, 1) {$0$};
    
        \draw[dotted, shift = {(6.5,-3.5)}] (-0.5, -0.3) -- (4.2, -0.3) -- (6.0, 2.3) -- (1.3, 2.3) -- (-0.5, -0.3);
        \draw[very thick, shift = {(6.5+1.6,1-3.5)}] (0-0.4, 0) -- (0.8, 1.0+0.4) -- (1.6+0.4, 0) -- (0.8, -1.0-0.4) -- (0-0.4, 0);
        \node at (6.5+0.3, 2-3.5) {\LARGE$\Diamond_{2,4}^\ast$};

        \node[shift = {(0, -7.5)}] at (0, 0) {$0$};
        \node[shift = {(0, -7.5)}] at (1.6, 0) {$0$};
        \node[shift = {(0, -7.5)}] at (3.2, 0) {$-2$};
        \node[shift = {(0, -7.5)}] at (0.8, 1) {$0$};
        \node[shift = {(0, -7.5)}] at (2.4, 1) {$+3$};
        \node[shift = {(0, -7.5)}] at (4.0, 1) {$+0$};
        \node[shift = {(0, -7.5)}] at (1.6, 2) {$1$};
        \node[shift = {(0, -7.5)}] at (3.2, 2) {$-1$};
        \node[shift = {(0, -7.5)}] at (4.8, 2) {$0$};
        \node[shift = {(0, -7.5)}] at (1.6, 1) {$2$};
        \node[shift = {(0, -7.5)}] at (3.2, 1) {$-0$};
        \node[shift = {(0, -7.5)}] at (4.8, 1) {$0$};
    
        \draw[dotted, shift = {(0, -7.5)}] (-0.5, -0.3) -- (4.2, -0.3) -- (6.0, 2.3) -- (1.3, 2.3) -- (-0.5, -0.3);
        \draw[very thick, shift={(2.4+0, 1-7.5)}] (0-0.4, 0) -- (0.8, 1.0+0.4) -- (1.6+0.4, 0) -- (0.8, -1.0-0.4) -- (0-0.4, 0);
        \node[shift = {(0, -7.5)}] at (0+0.2, 2) {\LARGE$\Diamond_{3,2}^\ast$};

        \node[shift = {(6.5,-7.5)}] at (0, 0) {$0$};
        \node[shift = {(6.5,-7.5)}] at (1.6, 0) {$0$};
        \node[shift = {(6.5,-7.5)}] at (3.2, 0) {$2$};
        \node[shift = {(6.5,-7.5)}] at (0.8, 1) {$0$};
        \node[shift = {(6.5,-7.5)}] at (2.4, 1) {$3$};
        \node[shift = {(6.5,-7.5)}] at (4.0, 1) {$-0$};
        \node[shift = {(6.5,-7.5)}] at (1.6, 2) {$1$};
        \node[shift = {(6.5,-7.5)}] at (3.2, 2) {$+1$};
        \node[shift = {(6.5,-7.5)}] at (4.8, 2) {$+0$};
        \node[shift = {(6.5,-7.5)}] at (1.6, 1) {$2$};
        \node[shift = {(6.5,-7.5)}] at (3.2, 1) {$0$};
        \node[shift = {(6.5,-7.5)}] at (4.8, 1) {$0$};
    
        \draw[dotted, shift = {(6.5,0-7.5)}] (-0.5, -0.3) -- (4.2, -0.3) -- (6.0, 2.3) -- (1.3, 2.3) -- (-0.5, -0.3);
        \draw[very thick, shift = {(6.5+3.2,2-7.5)}] (0-0.4, 0) -- (0.8, 1.0+0.4) -- (1.6+0.4, 0) -- (0.8, -1.0-0.4) -- (0-0.4, 0);
        \node at (6.5+0.3, 2-7.5) {\LARGE$\Diamond_{3,3}^\ast$};
    \end{tikzpicture} \]
    Then we have
    \begin{eqnarray*}
        && \partb_{1,3}^\ast(b) = 1-0 = 1,\ \partb_{2,2}^\ast(b) = 0+3-1-2-0 = 0,\ \partb_{3,1}^\ast(b) = +2+0-3 = -1, \\
        && \partb_{2,4}^\ast(b) = 2+0-3 = -1, \ \partb_{3,2}^\ast(b) = 3+0-1-0-2 = 0,\ \partb_{3,3}^\ast(b) = 1+0-0 = 1,
    \end{eqnarray*}
    which implies $\varepsilon_3^\ast(b) = 1, m_3^\ast(b) = (1), M_3^\ast(b) = (2)$. 
    This coincides with Example \ref{ex:D4 bicrystal}.
\end{ex}

\section{A combinatorial description of the extended crystal} \label{sec:extended}
\subsection{A combinatorial description of the extended crystal}
In this subsection, we assume $\mathfrak{g}$ is always of finite $ABD$ type. Let
\[ \widehat{\mathcal{B}}(\infty) (= \widehat{\mathcal{B}}_\mathfrak{g}(\infty)) \,=\, \left\{\, \left. {\bf b} = (b^{(k)})_{k \in \mathbb{Z}} \in \prod_{k \in \mathbb{Z}} \mathcal{B}_\mathfrak{g}(\infty)  \,\right|\, 
    b^{(k)} = {\bf 1} \mbox{ for all but finitely many } k \right\}. \]
We use the exponent $(t)$ to indicate the position; for instance, $b^{(k)} = (b_{s,t}^{(k)})$ is the $k$-th component of ${\bf b} \in \widehat{\mathcal{B}}(\infty)$. 
We can find the combinatorial description of $\widehat{\mathcal{B}}(\infty)$ by extending that given in Section \ref{subsec:bicrystal description}. 
Let $(\widehat{\Pi}_i, \leq)$ be a partially ordered set
\[ \widehat{\Pi}_i = \{ \lambda \,|\, \lambda \in \Pi_i \} \cup \{ \mu^\ast \,|\, \mu \in \Pi^\ast_i \}, \]
where $\mu^\ast$ is a formal symbol, with the following ordering.
\begin{itemize}
    \item For any $\lambda \in \Pi_i$ and $\mu \in \Pi_i^\ast$, $\lambda \leq \mu^\ast$ always holds.
    \item For any $\lambda, \mu \in \Pi_i$, $\lambda \leq \mu$ in $\widehat{\Pi}_i$ holds if and only if $\lambda \subseteq \mu$.
    \item For any $\lambda, \mu \in \Pi_i^\ast$, $\lambda^\ast \leq \mu^\ast$ in $\widehat{\Pi}_i$ holds if and only if $\lambda \supseteq \mu$.
\end{itemize}
We call $\gamma \in \widehat{\Pi}_i$ unstarrred (resp. starred) if $\gamma = \lambda$ (resp. $\gamma = \mu^\ast$) for some $\lambda \in \Pi_i$ (resp. $\mu \in \Pi_i^\ast$).

For $\gamma \in \widehat{\Pi}_i$ and ${\bf b} = (b^{(k)}) \in \widehat{\mathcal{B}}(\infty)$ and $k \in \mathbb{Z}$, set 
\[ \widehat{\sumb}_\gamma^{(k)}({\bf b}) = \begin{cases}
    \displaystyle \sumb_\lambda(b^{(k)}) & \mbox{if $\gamma = \lambda$ is unstarred}, \\
    \displaystyle \sumb_\mu^\ast(b^{(k+1)}) & \mbox{if $\gamma = \mu^\ast$ is starrred}.
\end{cases} \]
In addition, for $(i, k) \in \widehat{I}$ and ${\bf b} = (b^{(k)}) \in \widehat{\mathcal{B}}(\infty)$, set
\begin{eqnarray*}
    \mathcal{E}_{(i,k)}({\bf b}) &=& \max\{ \widehat{\sumb}_\gamma^{(k)} ({\bf b}) : \gamma \in \widehat{\Pi}_i \}, \\
    \widehat{m}_{(i,k)}({\bf b}) &=& \min\{ \gamma \in \widehat{\Pi}_i : \widehat{\sumb}_\gamma^{(k)}({\bf b}) = \mathcal{E}_{(i,k)}({\bf b}) \},\\
    \widehat{M}_{(i,k)}({\bf b}) &=& \max\{ \gamma \in \widehat{\Pi}_i : \widehat{\sumb}_\gamma^{(k)}({\bf b}) = \mathcal{E}_{(i,k)}({\bf b}) \}.
\end{eqnarray*}
Note that
\begin{equation} \label{eqn:max value}
    \max\{ \widehat{\sumb}_\lambda^{(k)}({\bf b}) : \lambda \in \Pi_i \} = \varepsilon_i(b^{(k)}) \quad{and}\quad
    \max\{ \widehat{\sumb}_{\mu^\ast}^{(k)}({\bf b}) : \mu \in \Pi_i^\ast \} = \varepsilon_i^\ast(b^{(k+1)}).
\end{equation}

\begin{defn} \label{def:combi extended crystal}
Define maps on $\widehat{\mathcal{B}}(\infty)$. 
For ${\bf b} = (b^{(k)}) \in \widehat{\mathcal{B}}(\infty)$ and $(i, k) \in \widehat{I}$, 
\begin{eqnarray*}
    \widehat{\rm wt}({\bf b}) &=& \sum_{t \in \mathbb{Z}} (-1)^t \,{\rm wt}(b^{(t)}), \\
    \widehat{\varepsilon}_{(i,k)}({\bf b}) &=& \max\{ \widehat{\sumb}_\lambda^{(k)}({\bf b}) : \lambda \in \Pi_i \} 
    - \max\{ \widehat{\sumb}_{\mu^\ast}^{(k)}({\bf b}) : \mu \in \Pi_i^\ast \},\\
    \widetilde{E}_{(i,k)}({\bf b}) &=& \begin{cases}
        \displaystyle {\bf b} - \sum_{(s,t) \in \lambda} {\bf v}(\mathcal{I}_{T_i}(s,t))^{(k)} & \mbox{if } \widehat{M}_{(i,k)}({\bf b}) = \lambda \mbox{ is unstarred}, \\
        \displaystyle {\bf b} - \sum_{(s,t) \in \mu} {\bf v}(\mathcal{I}_{T_i^\ast}(s,t))^{(k+1)} & \mbox{if } \widehat{M}_{(i,k)}({\bf b}) = \mu^\ast \mbox{ is starred},
    \end{cases} \\
    \widetilde{F}_{(i,k)}({\bf b}) &=& \begin{cases}
        \displaystyle {\bf b} + \sum_{(s,t) \in \lambda} {\bf v}(\mathcal{I}_{T_i}(s,t))^{(k)} & \mbox{if } \widehat{m}_{(i,k)}({\bf b}) = \lambda \mbox{ is unstarred}, \\
        \displaystyle {\bf b} + \sum_{(s,t) \in \mu} {\bf v}(\mathcal{I}_{T_i^\ast}(s,t))^{(k+1)} & \mbox{if } \widehat{m}_{(i,k)}({\bf b}) = \mu^\ast \mbox{ is starred}.
    \end{cases}
\end{eqnarray*}
\end{defn}
\begin{ex}\label{ex:D4 extended crystal}
    We assume $\mathfrak{g}$ is of type $D_4$, and
    let ${\bf b} = (b^{(k)}) \in \widehat{\mathcal{B}}(\infty)$, where
    \[ b^{(0)} = (0,0,0,2,0,1,3,0,2,1,0,0), \qquad b^{(1)} = (0,0,0,2,0,0,2,1,2,1,1,0), \]
    and $b^{(k)} = {\bf 1}$ for all $k \in \mathbb{Z}$ except $0$ and $1$. 
    Then we have
    \[ \widehat{\sumb}_{(1)}^{(0)}({\bf b}) = 1, \quad\widehat{\sumb}_{(2)}^{(0)}({\bf b}) = 1, \quad\widehat{\sumb}_{(3)}^{(0)}({\bf b}) = 2, \quad\widehat{\sumb}_{(1)^\ast}^{(0)}({\bf b}) = 0 \]
    (cf. Example \ref{ex:D4 bicrystal}) and so
    \[ \widehat{M}_{(1,0)}({\bf b}) = (3) \in \Pi_1 \quad \mbox{and} \quad \widehat{m}_{(1,0)}({\bf b}) = (3) \in \Pi_1. \]
    Thus, $\widetilde{E}_{(1,0)}({\bf b})$ is obtained by applying $\widetilde{e}_1$ to $b_0$, 
    and $\widetilde{F}_{(1,0)}({\bf b})$ is obtained by applying $\widetilde{f}_1$ to $b_0$, that is,
    \begin{eqnarray*}
        {\bf b} &=& (\dots, {\bf 1}, b_1, b_0, {\bf 1}, \dots) \\
            &=& (\dots, {\bf 1}, (0,0,0,2,0,0,2,1,2,1,1,0),\, (0,0,0,2,0,1,3,0,2,1,0,0),\, {\bf 1}, \dots), \\
        \widetilde{E}_{(1,0)}({\bf b}) &=& (\dots, {\bf 1}, b_1, \widetilde{e}_1(b_0), {\bf 1}, \dots) \\
            &=& (\dots, {\bf 1}, b_1,\, (0,0,0,1,0,1,3,0,2,1,0,0), \,{\bf 1}, \dots), \\
        \widetilde{F}_{(1,0)}({\bf b}) &=& (\dots, {\bf 1}, b_1, \widetilde{f}_1(b_0), {\bf 1}, \dots) \\
            &=& (\dots, {\bf 1}, b_1,\, (0,0,0,3,0,1,3,0,2,1,0,0),\, {\bf 1}, \dots).
    \end{eqnarray*}
\end{ex}

If we identify $B(\infty)$ with $\mathcal{B}(\infty)$, we consider $\widehat{B}(\infty)$ and $\widehat{\mathcal{B}}(\infty)$ as the same sets.
\begin{thm} \label{thm:extended crystal iso}
    The maps $\widehat{\rm wt}, \widehat{\varepsilon}_{(i,k)}, \widetilde{E}_{(i,k)}, \widetilde{F}_{(i,k)}$ given in Definition \ref{def:extended crystal} and Definition \ref{def:combi extended crystal} coincide respectively.
\end{thm}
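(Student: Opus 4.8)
The plan is to verify that the four maps of Definition \ref{def:combi extended crystal} agree with those of Definition \ref{def:extended crystal} one at a time, reducing each to the single-component picture. Since Theorem \ref{thm:bicrystal} identifies the combinatorial operators $\widetilde{e}_i,\widetilde{f}_i,\widetilde{e}_i^\ast,\widetilde{f}_i^\ast$ and the functions $\varepsilon_i,\varepsilon_i^\ast$ of Definition \ref{def:crystal usual} and Definition \ref{def:crystal star} with Kashiwara's, it suffices to show that the recipe of Definition \ref{def:combi extended crystal} reproduces, slot by slot, the recipe of Definition \ref{def:extended crystal} written in terms of these operators. The weight is immediate, since both $\widehat{\rm wt}$ are literally the same alternating sum $\sum_{t}(-1)^t\,{\rm wt}(b^{(t)})$.

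For $\widehat{\varepsilon}_{(i,k)}$ I would invoke \eqref{eqn:max value}, which identifies the maximum of $\widehat{\sumb}^{(k)}_\gamma({\bf b})$ over unstarred $\gamma$ with $\varepsilon_i(b^{(k)})$ and over starred $\gamma$ with $\varepsilon_i^\ast(b^{(k+1)})$ (the latter via Theorem \ref{thm:epsilon} and $\sigma_i^\ast=\varepsilon_i^\ast$). Hence the combinatorial $\widehat{\varepsilon}_{(i,k)}({\bf b})=\varepsilon_i(b^{(k)})-\varepsilon_i^\ast(b^{(k+1)})$, matching Definition \ref{def:extended crystal}, and the same two identities give $\mathcal{E}_{(i,k)}({\bf b})=\max\{\varepsilon_i(b^{(k)}),\varepsilon_i^\ast(b^{(k+1)})\}$.

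The heart of the argument is a poset lemma controlling $\widehat{m}_{(i,k)}$ and $\widehat{M}_{(i,k)}$. Because in $\widehat{\Pi}_i$ every starred element dominates every unstarred one, a starred $\gamma$ attains $\mathcal{E}_{(i,k)}({\bf b})$ iff $\varepsilon_i^\ast(b^{(k+1)})\geq\varepsilon_i(b^{(k)})$ and an unstarred one iff $\varepsilon_i(b^{(k)})\geq\varepsilon_i^\ast(b^{(k+1)})$. Thus $\widehat{M}_{(i,k)}$ is unstarred exactly when $\widehat{\varepsilon}_{(i,k)}({\bf b})>0$ and starred when $\widehat{\varepsilon}_{(i,k)}({\bf b})\leq 0$, while $\widehat{m}_{(i,k)}$ is unstarred exactly when $\widehat{\varepsilon}_{(i,k)}({\bf b})\geq 0$ and starred when $\widehat{\varepsilon}_{(i,k)}({\bf b})<0$ -- precisely the case boundaries of Definition \ref{def:extended crystal}. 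Moreover, in the unstarred block the order of $\widehat{\Pi}_i$ is containment, so there $\widehat{M}_{(i,k)}=M_i(b^{(k)})$ and $\widehat{m}_{(i,k)}=m_i(b^{(k)})$; in the starred block the order is reversed containment, so the poset-maximal achiever corresponds to the containment-minimal $m_i^\ast(b^{(k+1)})$ and the poset-minimal achiever to the containment-maximal $M_i^\ast(b^{(k+1)})$.

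With this lemma I would finish by a four-way case check. When $\widehat{\varepsilon}_{(i,k)}({\bf b})>0$ (resp. $\geq 0$) the operator $\widetilde{E}_{(i,k)}$ (resp. $\widetilde{F}_{(i,k)}$) alters only the $k$-th component, by $-\sum_{M_i(b^{(k)})}{\bf v}$ (resp. $+\sum_{m_i(b^{(k)})}{\bf v}$), which by Definition \ref{def:crystal usual} is exactly $\widetilde{e}_i(b^{(k)})$ (resp. $\widetilde{f}_i(b^{(k)})$), agreeing with Definition \ref{def:extended crystal}. When $\widehat{\varepsilon}_{(i,k)}({\bf b})\leq 0$ (resp. $<0$) the operator alters only the $(k+1)$-st component, and I must check that this alteration equals $\widetilde{f}_i^\ast(b^{(k+1)})$ (resp. $\widetilde{e}_i^\ast(b^{(k+1)})$) as in Definition \ref{def:crystal star}; the reversed order on $\Pi_i^\ast$ supplies the correct selector ($m_i^\ast$ for $\widetilde{E}_{(i,k)}$, $M_i^\ast$ for $\widetilde{F}_{(i,k)}$), matching the partitions used by $\widetilde{f}_i^\ast$ and $\widetilde{e}_i^\ast$. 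I expect this starred case to be the main obstacle: the partitions align on the nose, but one must still reconcile the uniform sign convention of Definition \ref{def:combi extended crystal} (a $+$ for $\widetilde{F}_{(i,k)}$, a $-$ for $\widetilde{E}_{(i,k)}$) with the native signs of the star operators, tracking that the move lands in the $(k+1)$-st slot whose contribution to $\widehat{\rm wt}$ carries the opposite parity. Verifying that these conspire to yield precisely $\widetilde{e}_i^\ast$ and $\widetilde{f}_i^\ast$ is the delicate point, and the boundary value $\widehat{\varepsilon}_{(i,k)}({\bf b})=0$ must be checked to fall on the correct side, which the $\geq/\leq$ in the lemma guarantees.
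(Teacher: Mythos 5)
Your proposal is correct and follows essentially the same route as the paper's proof: both reduce everything to \eqref{eqn:max value} together with the observation that starred elements dominate unstarred ones in $\widehat{\Pi}_i$, so that $\widehat{M}_{(i,k)}({\bf b})$ (resp.\ $\widehat{m}_{(i,k)}({\bf b})$) is unstarred exactly when $\widehat{\varepsilon}_{(i,k)}({\bf b})>0$ (resp.\ $\geq 0$), after which the operators are matched slot by slot (your identification of the poset extremes with $M_i(b^{(k)})$, $m_i(b^{(k)})$, $m_i^\ast(b^{(k+1)})$, $M_i^\ast(b^{(k+1)})$ is exactly what the paper leaves implicit). The ``delicate point'' you flag about signs is real but is not something to be reconciled by weight parity: it is a typo in the starred branches of Definition \ref{def:combi extended crystal} (there the signs should be $+$ for $\widetilde{E}_{(i,k)}$ and $-$ for $\widetilde{F}_{(i,k)}$, so as to match $\widetilde{f}_i^\ast$ and $\widetilde{e}_i^\ast$, as Example \ref{ex:A3 extended crystal} and the reformulation in Section \ref{subsec:extended diamond} confirm), a point the paper's own proof passes over silently.
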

\begin{proof}
    When we consider $\widehat{B}(\infty)$ and $\widehat{\mathcal{B}}(\infty)$ as the same sets, 
    it is immediate that the maps $\widehat{\rm wt}$ coincide. 
    In addition, the maps $\widehat{\varepsilon}_{(i,k)}$ coincide by \eqref{eqn:max value}. 
    It remains to show that the extended crystal operators respectively coincide.

    For ${\bf b} \in \widehat{B}(\infty)$, suppose $\widehat{\varepsilon}_{(i,k)}({\bf b}) > 0$, or equivalently $\varepsilon_i(b^{(k)}) > \varepsilon_i^\ast(b^{(k+1)})$. 
    By \eqref{eqn:max value}, $\max\{ \widehat{\sumb}_\lambda^{(k)}({\bf b}) : \lambda \in \Pi_i \} > \max\{ \widehat{\sumb}_{\mu^\ast}^{(k)}({\bf b}) : \mu \in \Pi_i^\ast \}$ and 
    it is equivalent to that $\widehat{M}_{(i,k)} ({\bf b})$ is unstarred for ${\bf b} \in \widehat{\mathcal{B}}(\infty)$. 
    In this case, the operator $\widetilde{E}_{(i,k)}({\bf b})$ is obtained by applying $\widetilde{e}_i$ to $b^{(k)}$. 
    Similarly, we can show that $\varepsilon_{(i,k)}({\bf b}) \leq 0$ if and only if $\widehat{M}_{(i,k)}({\bf b})$ is starred, 
    which implies extended crystal operators $\widetilde{E}_{(i,k)}$ on $\widehat{B}(\infty)$ and $\widehat{\mathcal{B}}(\infty)$ coincide. 
    We can apply the similar argument to $\widetilde{F}_{(i,k)}$, which completes the proof.
\end{proof}

\subsection{An extended sliding diamond rule for type $A_n$} \label{subsec:extended diamond}
Since $\widehat{\mathcal{B}}(\infty)$ can be considered as an infinite copy of $\mathcal{B}(\infty)$, 
we construct a combinatorial model for $\widehat{\mathcal{B}}(\infty)$ by infinitely combining that for $\mathcal{B}(\infty)$ (cf. Example \ref{ex:configuration}~(1)).
In particular, we explicitly give a combinatorial rule for the extended crystal operators on $\widehat{\mathcal{B}}(\infty)$ for type $A_n$, 
which is called an extended sliding diamond rule. It is a natural extension of the sliding diamond rule (Definition \ref{def:SD}).
\begin{defn}[The extended sliding diamond rule] \label{def:extended SD}
    Take ${\bf b} = (b^{(k)}) \in \widehat{\mathcal{B}}(\infty)$ with $b^{(k)} = (b^{(k)}_{s,t}) \in \mathcal{B}(\infty)$ for $k \in \mathbb{Z}$. 
    \begin{enumerate}
        \item Put $b_{s,t}^{(k)}$ in a plane lying at the point
        \[ \begin{cases}
            (-s-2t+3-kn, s) & \mbox{when $k$ is even,} \\
            (-s-2t+3-kn, n-s+2) & \mbox{when $k$ is odd}
        \end{cases} \]
        for $(s, t) \in \mathcal{I}_n^A$.
        In this case, we denote by $\Diamond^{(k)}_{s,t}$ (resp. $\Diamond^{\ast(k)}_{s,t}$) the diamond $\Diamond_{s,t}$ (resp. $\Diamond_{s,t}^\ast$) in the $k$-th component (cf. Definition \ref{def:SD}). 

        \[ \begin{tikzpicture}[xscale=0.8, yscale=0.8]
        \node at (14, 0) {$b_{1,1}^{(k)}$};
        \node at (12, 0) {$b_{1,2}^{(k)}$};
        \node at (10, 0) {$\cdots$};
        \node at (8, 0) {$b_{1,n}^{(k)}$};
        \node at (13, 1) {$b_{2,1}^{(k)}$};
        \node at (11, 1) {$\cdots$};
        \node at (9, 1) {$b_{2,n-1}^{(k)}$};
        \node at (12, 2) {$\ddots$};
        \node at (10, 2) {$\iddots$};
        \node at (11, 3) {$b_{n,1}^{(k)}$};
    
        \node at (10, 4) {$b_{1,1}^{(k+1)}$};
        \node at (9, 3) {$b_{2,1}^{(k+1)}$};
        \node at (8, 2) {$\iddots$};
        \node at (7, 1) {$b_{n,1}^{(k+1)}$};
        \node at (8, 4) {$b_{1,2}^{(k+1)}$};
        \node at (7, 3) {$\iddots$};
        \node at (6, 2) {$b_{n-1, 2}^{(k+1)}$};
        \node at (6, 4) {$\cdots$};
        \node at (5, 3) {$\ddots$};
        \node at (4, 4) {$b_{1,n}^{(k+1)}$};
    
        \node at (6, 0) {$b_{1,1}^{(k+2)}$};
        \node at (4, 0) {$b_{1,2}^{(k+2)}$};
        \node at (2, 0) {$\cdots$};
        \node at (0, 0) {$b_{1,n}^{(k+2)}$};
        \node at (5, 1) {$b_{2,1}^{(k+2)}$};
        \node at (3, 1) {$\cdots$};
        \node at (1, 1) {$b_{2,n-1}^{(k+2)}$};
        \node at (4, 2) {$\ddots$};
        \node at (2, 2) {$\iddots$};
        \node at (3, 3) {$b_{n,1}^{(k+2)}$};
    
        \draw[thick] (-1, 0) -- (3, 4) -- (7, 0) -- (11, 4) -- (15, 0);
        \node at (0, 2) {$\cdots$};
        \node at (14, 2) {$\cdots$};
    \end{tikzpicture} \]

        \item For $b_{u, v}^{(k)}$ contained in $\Diamond_{s,t}^{(k)}$ and $\Diamond_{s,t}^{\ast(k)}$, assign the coefficient $\epsilon_{u,v}^{(k)}$ to each $b_{u,v}^{(k)}$ as follows;
        for any $u,v$ and $k$, $\epsilon_{u,v}^{(k)} = \epsilon_{u,v}$ (cf. Definition \ref{def:SD}), that is,
        \[ \epsilon_{u,v}^{(k)} = \begin{cases}
            1 & \mbox{if }v = t, \\
            \langle h_t, \alpha_v \rangle & \mbox{otherwise.}
        \end{cases} \]
        \item Obtain the $\epsilon$-weighted sum of $b_{u,v}^{(k)}$ contained in $\Diamond_{s,t}^{(k)}$ or $\Diamond_{s,t}^{\ast(k)}$, 
        which results in
        \[ \sum_{(u, v) \in \Diamond_{s,t}^{(k)}} \epsilon_{u,v}^{(k)}b_{u,v}^{(k)} = \partb_{s,t}^{(k)}(b), \quad 
        \sum_{(u, v) \in \Diamond_{s,t}^{\ast(k)}} \epsilon_{u,v}^{(k)}b_{u,v}^{(k)} = \partb_{s,t}^{\ast(k)}(b). \]
    \end{enumerate}
\end{defn}

We can obtain $\widehat{\sumb}_\gamma^{(k)}({\bf b})$ for $\gamma \in \widehat{\Pi}_i$ from the extended sliding diamond rule, 
and then we can calculate the maps as given in Definition \ref{def:combi extended crystal}.
In particular, when $\mathfrak{g}$ is of type $A$, $\widehat{\Pi}_i$ is totally ordered 
and so we can simply relabel $\widehat{\Pi}_i$ as integers; identify the unstarred $\lambda$ with $|\lambda|$ 
and the starred $\mu^\ast$ with $n+2-|\mu|$. Then the extended crystal operators can be rewritten as follows. 
\begin{eqnarray*}
    \widetilde{E}_{(i,k)}({\bf b}) &=& \begin{cases}
        (\dots, b^{(k+1)}, \widetilde{e}_i(b^{(k)}), \dots) & \mbox{if } 1 \leq \widehat{M}_{(i,k)}({\bf b}) \leq n+1-i, \\
        (\dots, \widetilde{f}_i^\ast(b^{(k+1)}), b^{(k)}, \dots) & \mbox{if } n+1-i < \widehat{M}_{(i,k)}({\bf b}) \leq n+1
    \end{cases} \\
    \widetilde{F}_{(i,k)}({\bf b}) &=& \begin{cases}
        (\dots, b^{(k+1)}, \widetilde{f}_i(b^{(k)}), \dots) & \mbox{if } 1 \leq \widehat{m}_{(i,k)}({\bf b}) \leq n+1-i, \\
        (\dots, \widetilde{e}_i^\ast(b^{(k+1)}), b^{(k)}, \dots) & \mbox{if } n+1-i < \widehat{m}_{(i,k)}({\bf b}) \leq n+1
    \end{cases}
\end{eqnarray*}

\begin{ex}\label{ex:A3 extended crystal}
    We assume $\mathfrak{g}$ is of type $A_3$, and 
    let ${\bf b} = (b^{(k)}) \in \widehat{\mathcal{B}}(\infty)$, where $b^{(0)} = (2,4,0,5,1,3)$ and $b^{(1)} = (0,2,1,3,1,2)$ and $b^{(k)} = {\bf 1}$ for all $k \in \mathbb{Z}$ except $0$ and $1$. 
    \[ \begin{tikzpicture}
        \node at (7, 3) {$0$};
        \node at (8, 2) {$0$};
        \node at (9, 1) {$0$};
        \node at (8, 0) {$\underline{3}$};
        \node at (6, 0) {$\underline{1}$};
        \node at (4, 0) {$\underline{5}$};
        \node at (7, 1) {$\underline{0}$};
        \node at (5, 1) {$\underline{4}$};
        \node at (6, 2) {$\underline{2}$};
        \node at (5, 3) {$2$};
        \node at (4, 2) {$1$};
        \node at (3, 1) {$0$};
        \node at (3, 3) {$1$};
        \node at (2, 2) {$2$};
        \node at (1, 3) {$3$};
        \node at (2, 0) {$0$};
        \node at (1, 1) {$0$};
        \node at (0, 2) {$0$};

        \draw[thick, dotted] (0, 3) -- (3, 0) -- (6, 3) -- (9, 0);
        \node at (0, 0) {$\iddots$};
        \node at (9, 3) {$\iddots$};
    \end{tikzpicture} \]
    Note that unwritten integers in the diagram are all zero.
    By calculating $\epsilon$-weighted sums for diamonds $\Diamond_{1,1}^{(0)}, \Diamond_{2,1}^{(0)}, \Diamond_{3,1}^{(0)}$, and $\Diamond_{1,1}^{\ast(1)}$, 
    we directly obtain $\partb_{1,1}(b^{(0)}) = 2, \quad\partb_{2,1}(b^{(0)}) = -2, \quad\partb_{3,1}(b^{(0)}) = 2, \quad\partb_{1,1}^\ast(b^{(1)}) = 2$ (cf. Example \ref{ex:A3 diamond}).
    \[ \begin{tikzpicture}
        \node at (5*0.8, 0) {$\underline{+3}$};
        \node at (3*0.8, 0) {$\underline{-1}$};
        \node at (1*0.8, 0) {$\underline{5}$};
        \node at (4*0.8, 1) {$\underline{+0}$};
        \node at (2*0.8, 1) {$\underline{4}$};
        \node at (3*0.8, 2) {$\underline{2}$};
        \node at (2*0.8, 3) {$2$};
        \node at (1*0.8, 2) {$1$};
        \node at (0, 1) {$0$};
        \node at (0, 3) {$\ddots$};
        \node at (5*0.8, 2) {$\iddots$};

        \draw[dotted] (0, 0) -- (3*0.8, 3) -- (6*0.8, 0);
        \draw[very thick] (4.4, -0.3) --(5.5, 1.3) -- (2.9, 1.3) -- (1.8, -0.3) -- (4.4, -0.3);
        \node at (-1.1, 3) {\LARGE$\Diamond_{1,1}^{(0)}$};

        \node[shift={(7, 0)}] at (5*0.8, 0) {$\underline{3}$};
        \node[shift={(7, 0)}] at (3*0.8, 0) {$\underline{1}$};
        \node[shift={(7, 0)}] at (1*0.8, 0) {$\underline{5}$};
        \node[shift={(7, 0)}] at (4*0.8, 1) {$\underline{+0}$};
        \node[shift={(7, 0)}] at (2*0.8, 1) {$\underline{-4}$};
        \node[shift={(7, 0)}] at (3*0.8, 2) {$\underline{+2}$};
        \node[shift={(7, 0)}] at (2*0.8, 3) {$2$};
        \node[shift={(7, 0)}] at (1*0.8, 2) {$1$};
        \node[shift={(7, 0)}] at (0, 1) {$0$};
        \node[shift={(7, 0)}] at (0, 3) {$\ddots$};
        \node[shift={(7, 0)}] at (5*0.8, 2) {$\iddots$};

        \draw[dotted, shift={(7, 0)}] (0, 0) -- (3*0.8, 3) -- (6*0.8, 0);
        \draw[very thick, shift={(7-0.8, 0+1)}] (4.4, -0.3) --(5.5, 1.3) -- (2.9, 1.3) -- (1.8, -0.3) -- (4.4, -0.3);
        \node[shift={(7, 0)}] at (-1.1, 3) {\LARGE$\Diamond_{2,1}^{(0)}$};
        \node[shift={(0, -4.5)}] at (5*0.8, 0) {$\underline{3}$};
        \node[shift={(0, -4.5)}] at (3*0.8, 0) {$\underline{1}$};
        \node[shift={(0, -4.5)}] at (1*0.8, 0) {$\underline{5}$};
        \node[shift={(0, -4.5)}] at (4*0.8, 1) {$\underline{0}$};
        \node[shift={(0, -4.5)}] at (2*0.8, 1) {$\underline{4}$};
        \node[shift={(0, -4.5)}] at (3*0.8, 2) {$\underline{+2}$};
        \node[shift={(0, -4.5)}] at (2*0.8, 3) {$2$};
        \node[shift={(0, -4.5)}] at (1*0.8, 2) {$1$};
        \node[shift={(0, -4.5)}] at (0, 1) {$0$};
        \node[shift={(0, -4.5)}] at (0, 3) {$\ddots$};
        \node[shift={(0, -4.5)}] at (5*0.8, 2) {$\iddots$};

        \draw[dotted, shift={(0, -4.5)}] (0, 0) -- (3*0.8, 3) -- (6*0.8, 0);
        \draw[very thick, shift={(0-1.6, -4.5+2)}] (4.4, -0.3) --(4.4+0.66, -0.3+0.96) -- (4.4-1.56+0.66, -0.3+0.96) -- (4.4-1.56, -0.3) -- (4.4, -0.3);
        \node[shift={(0, -4.5)}] at (-1.1, 3) {\LARGE$\Diamond_{3,1}^{(0)}$};

        \node[shift={(7, -4.5)}] at (5*0.8, 0) {$\underline{3}$};
        \node[shift={(7, -4.5)}] at (3*0.8, 0) {$\underline{1}$};
        \node[shift={(7, -4.5)}] at (1*0.8, 0) {$\underline{5}$};
        \node[shift={(7, -4.5)}] at (4*0.8, 1) {$\underline{0}$};
        \node[shift={(7, -4.5)}] at (2*0.8, 1) {$\underline{4}$};
        \node[shift={(7, -4.5)}] at (3*0.8, 2) {$\underline{2}$};
        \node[shift={(7, -4.5)}] at (2*0.8, 3) {$+2$};
        \node[shift={(7, -4.5)}] at (1*0.8, 2) {$1$};
        \node[shift={(7, -4.5)}] at (0, 1) {$0$};
        \node[shift={(7, -4.5)}] at (0, 3) {$\ddots$};
        \node[shift={(7, -4.5)}] at (5*0.8, 2) {$\iddots$};

        \draw[dotted, shift={(7, -4.5)}] (0, 0) -- (3*0.8, 3) -- (6*0.8, 0);
        \draw[very thick, shift={(7, -4.5)}] (1.3, 2.7) -- (1.3+1.82, 2.7) -- (1.3-0.77+1.82, 2.7+1.12) -- (1.3-0.77, 2.7+1.12) -- (1.3, 2.7);
        \node[shift={(7, -4.5)}] at (-1.1, 3) {\LARGE$\Diamond_{1,1}^{\ast(1)}$};
    \end{tikzpicture} \]
    Then we have
    \[ \widehat{\sumb}_{(1)}^{(0)}({\bf b}) = 2, \quad\widehat{\sumb}_{(2)}^{(0)}({\bf b}) = 0, \quad\widehat{\sumb}_{(3)}^{(0)}({\bf b}) = 2, \quad\widehat{\sumb}_{(1)^\ast}^{(0)}({\bf b}) = 2 \]
    and so
    \[ \widehat{M}_{(1,0)}({\bf b}) = (1)^\ast\,\in \Pi_1^\ast \quad \mbox{and} \quad \widehat{m}_{(1,0)}({\bf b}) = (1) \in \Pi_1. \]
    Thus, $\widetilde{E}_{(1,0)}({\bf b})$ is obtained by applying $\widetilde{f}_1^\ast$ to $b_1$, 
    and $\widetilde{F}_{(1,0)}({\bf b})$ is obtained by applying $\widetilde{f}_1$ to $b_0$, that is,
    \begin{eqnarray*}
        {\bf b} &=& (\dots, {\bf 1}, b_1, b_0, {\bf 1}, \dots) \\
            &=& (\dots, {\bf 1}, \,(0,2,1,3,1,2),\, (2,4,0,5,1,3),\, {\bf 1}, \dots), \\
        \widetilde{E}_{(1,0)}({\bf b}) &=& (\dots, {\bf 1}, \widetilde{f}_1^\ast(b_1), b_0, {\bf 1}, \dots) \\
            &=& (\dots, {\bf 1}, \,(0,2,1,3,1,{\bf 3}),\, (2,4,0,5,1,3), \,{\bf 1}, \dots), \\
        \widetilde{F}_{(1,0)}({\bf b}) &=& (\dots, {\bf 1}, b_1, \widetilde{f}_1(b_0), {\bf 1}, \dots) \\
            &=& (\dots, {\bf 1}, \,(0,2,1,3,1,2),\, (2,4,0,5,1,{\bf 4}),\, {\bf 1}, \dots).
    \end{eqnarray*}
\end{ex}

\section{An application to representation theory: categorical crystal} \label{sec:categorical crystal}
\subsection{The categorical crystal} \label{subsec:categorical crystal review}
We briefly review the categorical crystal introduced in \cite{KP22} and we follow notations in \cite{KP22}. 
Let $\mathcal{C}_\mathfrak{g}$ be the category of finite-dimensional integrable modules over a quantum affine algebra $U_q'(\mathfrak{g})$ associated with a symmetrizable Kac-Moody algebra $\mathfrak{g}$, and 
$\mathcal{C}_\mathfrak{g}^0$ be a Hernandez-Leclerc category (cf. \cite{HL10}), that is, the smallest full subcategory of $\mathcal{C}_\mathfrak{g}$ such that 
\begin{itemize}
    \item $\mathcal{C}_\mathfrak{g}^0$ contains $V(\varpi_i)_x$ for all $(i, x) \in \sigma_0(\mathfrak{g})$ (see \cite[Section 2.2]{KP22});
    \item $\mathcal{C}_\mathfrak{g}^0$ is stable by taking subquotients, extensions, and tensor products.
\end{itemize}
The canonical choice of $\sigma_0(\mathfrak{g})$ is given in \cite[Section 2.3]{KKOP22}, and we will follow it. 
For a module $M$ in $\mathcal{C}_\mathfrak{g}^0$, we denote by $\mathscr{D}M$ and $\mathscr{D}^{-1}M$ the right and left dual of $M$, respectively (see \cite[Section 2.3]{KKOP24} also). 
By considering $\mathscr{D}$ as a functor, we use notations like $\mathscr{D}^k M$ for $k \in \mathbb{Z}$. 

When (a Lie algebra of affine type) $\mathfrak{g}$ is given, the type of the simple Lie algebra $\mathfrak{g}_{\rm fin}$ is given in \cite[Table 1]{KP22}. 
In particular, if $\mathfrak{g}$ is of type $A_n^{(1)}$ and $D_n^{(1)}$, the type of $\mathfrak{g}_{\rm fin}$ is $A_n$ and $D_n$, respectively. 
We denote by $I_{\rm fin}$ the index set corresponding to $\mathfrak{g}_{\rm fin}$, 
and by $R$ the symmetric quiver Hecke algebra associated with the Cartan datum of $\mathfrak{g}_{\rm fin}$ (see \cite{KKK18}). 

For a strong duality datum $\mathcal{D} = \{ {\sf L}_i \}_{i \in I_{\rm fin}}$ associated with $\mathfrak{g}_{\rm fin}$, 
authors of \cite{KKOP24} construct an exact functor
\[ \mathcal{F}_\mathcal{D} : \mbox{$R$-gmod} \longrightarrow \mathcal{C}_\mathfrak{g}^0 \]
which sends simple $R$-modules to simple modules in $\mathcal{C}_\mathfrak{g}^0$ (cf. \cite{KKK18}). 
In particular, we have $\mathcal{F}_\mathcal{D}(L(i)) = {\sf L}_i$, where $L(i)$ is the one-dimensional simple $R(\alpha_i)$-modules (see \cite{KKK18}). 
We call the functor $\mathcal{F}_\mathcal{D}$ the quantum affine Schur-Weyl duality functor, or shortly duality functor. 
In addition, we define the category $\mathcal{C}_\mathcal{D}$ to be the smallest full subcategory of $\mathcal{C}_\mathfrak{g}^0$ such that
\begin{itemize}
    \item $\mathcal{C}_\mathcal{D}$ contains $\mathcal{F}_\mathcal{D}(L)$ for any simple $R$-modules $L$;
    \item $\mathcal{C}_\mathcal{D}$ is stable by taking subquotients, extensions, and tensor products.
\end{itemize}
It is proved in \cite{LV11} that the set of isomorphism classes of simple modules in $R$-gmod has a crystal structure over $\mathfrak{g}_{\rm fin}$ 
and it is isomorphic to $B_{\mathfrak{g}_{\rm fin}}(\infty)$. 
We denote by $B_\mathcal{D}$ the set of isomorphism classes of simple modules in $\mathcal{C}_\mathcal{D}$ 
and then we have a bijection
\begin{equation} \label{eqn:category crystal iso}
    \mathcal{L}_\mathcal{D} : B_{\mathfrak{g}_{\rm fin}}(\infty) \ \longrightarrow \ B_\mathcal{D},
\end{equation}
which is obtained from $\mathcal{F}_\mathcal{D}$ with considering isomorphism classes of $R$-gmod as elements of $B_{\mathfrak{g}_{\rm fin}}(\infty)$. 
This holds since $\mathcal{F}_\mathcal{D}$ preserves simple modules. 
We note that if $\mathcal{D} = \{ {\sf L}_i \}$ is a strong duality datum, then $\mathscr{D}^k\mathcal{D} = \{ \mathscr{D}^k {\sf L}_i \}$ is also a strong duality datum, 
and that $\mathcal{L}_{\mathscr{D}^k \mathcal{D}}(b) = \mathscr{D}^k \mathcal{L}_\mathcal{D}(b)$ 
for any $k \in \mathbb{Z}$ and $b \in B_{\mathfrak{g}_{\rm fin}}(\infty)$ (cf. \cite[Lemma 3.3]{KP22}). 

A strong duality datum $\mathcal{D} = \{ {\sf L}_i \}_{i \in I_{\rm fin}}$ is said to be complete if for any simple module $M$ in $\mathcal{C}_\mathfrak{g}^0$, 
there are simple modules $M_k \in \mathcal{C}_\mathcal{D}$ such that
\begin{itemize}
    \item $M_k$ is isomorphic to the trivial module except finitely many $k$;
    \item $M$ is isomorphic to ${\rm hd}(\dots \otimes \mathscr{D}^2 M_2 \otimes \mathscr{D} M_1 \otimes M_0 \otimes \mathscr{D}^{-1} M_{-1} \otimes \cdots)$,
\end{itemize}
where ${\rm hd}(M)$ is the head of the given module $M$.

For ${\bf b} = (b_k) \in \widehat{B}_{\mathfrak{g}_{\rm fin}}(\infty)$, let
\[ \mathscr{L}_\mathcal{D}({\bf b}) = {\rm hd}(\dots \otimes \mathscr{D}^2 L_2 \otimes \mathscr{D} L_1 \otimes L_0 \otimes \mathscr{D}^{-1} L_{-1} \otimes \cdots) \]
where $L_k = \mathcal{L}_\mathcal{D}(b_k)$ for all $k \in \mathbb{Z}$ and $\mathcal{L}_\mathcal{D}$ is the bijection \eqref{eqn:category crystal iso}. 
Note that $\mathscr{D}^k L_k = \mathcal{L}_{\mathscr{D}^k \mathcal{D}}(b_k)$ for $k \in \mathbb{Z}$. 
Then it is shown in \cite[Proposition 5.5]{KP22} that $\mathscr{L}_\mathcal{D}({\bf b})$ is simple. 
In addition, we define a map $\Phi_\mathcal{D} : \widehat{B}_{\mathfrak{g}_{\rm fin}}(\infty) \ \longrightarrow \ \mathscr{B}(\mathfrak{g})$ by
\begin{equation} \label{eqn:extended category crystal iso}
    \Phi_\mathcal{D}({\bf b}) = \mathscr{L}_\mathcal{D}({\bf b}),
\end{equation}
where $\mathscr{B}(\mathfrak{g})$ is the set of the isomorphism classes of simple modules in $\mathcal{C}_\mathfrak{g}^0$, 
and then the map $\Phi$ is a well-defined bijection. 

For $M \in \mathscr{B}(\mathfrak{g})$ and $(i, k) \in \widehat{I}_{\rm fin}$, we define operators
\[ \widetilde{E}_{(i,k)}(M) := {\rm hd}(M \otimes \mathscr{D}^{k+1}{\sf L}_i), \qquad \widetilde{F}_{(i,k)}(M) := {\rm hd}(\mathscr{D}^{k}{\sf L}_i \otimes M). \]
When we consider $\mathscr{B}(\mathfrak{g})$ with $\widetilde{E}_{(i,k)}$ and $\widetilde{F}_{(i,k)}$, we write $\mathscr{B}_\mathcal{D}(\mathfrak{g})$ instead of $\mathscr{B}(\mathfrak{g})$ 
since the action of $\widetilde{E}_{(i,k)}$ and $\widetilde{F}_{(i,k)}$ depends on the choice of $\mathcal{D}$. 
\begin{thm}[{\cite[Theorem 5.10]{KP22}}]
    The map $\Phi_\mathcal{D} : \widehat{B}_{\mathfrak{g}_{\rm fin}}(\infty) \to \mathscr{B}_\mathcal{D}(\mathfrak{g})$ \eqref{eqn:extended category crystal iso} 
    induces an $\widehat{I}_{\rm fin}$-colored graph isomorphism, in other words, we have
    \[ \Phi_\mathcal{D}(\widetilde{E}_{(i,k)}({\bf b})) = \widetilde{E}_{(i,k)}\Phi_\mathcal{D}({\bf b}) \quad \mbox{and} \quad \Phi_\mathcal{D}(\widetilde{E}_{(i,k)}({\bf b})) = \widetilde{E}_{(i,k)}\Phi_\mathcal{D}({\bf b})\] 
    for $(i, k) \in \widehat{I}_{\rm fin}$ and ${\bf b} \in \widehat{B}_{\mathfrak{g}_{\rm fin}}(\infty)$.
\end{thm}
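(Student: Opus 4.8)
The plan is to verify the two intertwining relations by unwinding $\Phi_\mathcal{D}({\bf b}) = \mathscr{L}_\mathcal{D}({\bf b}) = {\rm hd}(\cdots \otimes \mathscr{D}^{k+1}L_{k+1} \otimes \mathscr{D}^k L_k \otimes \cdots)$, where $L_j = \mathcal{L}_\mathcal{D}(b^{(j)})$, and comparing the categorical operators with the extended crystal operators of Definition~\ref{def:extended crystal} transported through $\Phi_\mathcal{D}$. I will treat $\widetilde{F}_{(i,k)}(M) = {\rm hd}(\mathscr{D}^k{\sf L}_i \otimes M)$; the case of $\widetilde{E}_{(i,k)}(M) = {\rm hd}(M \otimes \mathscr{D}^{k+1}{\sf L}_i)$ is parallel, inserting on the right and exchanging left and right convolution. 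The first ingredient is the categorification of $B_{\mathfrak{g}_{\rm fin}}(\infty)$ already available here: by Lauda--Vazirani \cite{LV11} and the properties of the duality functor, for \emph{any} strong duality datum $\mathcal{D}' = \{{\sf L}'_i\}$ the bijection $\mathcal{L}_{\mathcal{D}'}$ satisfies ${\rm hd}({\sf L}'_i \otimes \mathcal{L}_{\mathcal{D}'}(b)) = \mathcal{L}_{\mathcal{D}'}(\widetilde{f}_i b)$ and ${\rm hd}(\mathcal{L}_{\mathcal{D}'}(b) \otimes {\sf L}'_i) = \mathcal{L}_{\mathcal{D}'}(\widetilde{f}_i^\ast b)$, while tensoring with the left dual $\mathscr{D}^{-1}{\sf L}'_i$ and taking the head inverts right convolution and hence realizes $\widetilde{e}_i^\ast$. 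Applying this with $\mathcal{D}' = \mathscr{D}^k\mathcal{D}$ and $\mathscr{D}^{k+1}\mathcal{D}$ and using $\mathscr{D}^k\mathcal{L}_\mathcal{D}(b) = \mathcal{L}_{\mathscr{D}^k\mathcal{D}}(b)$ clears all grading shifts from the bookkeeping.

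Next I would prove a locality statement: the inserted factor $\mathscr{D}^k{\sf L}_i$ interacts only with the two adjacent copies $\mathscr{D}^{k+1}L_{k+1}$ and $\mathscr{D}^kL_k$. This rests on the commutation properties of dual fundamental modules in the Kashiwara--Kim--Oh--Park framework \cite{KKOP24}: for $j \notin \{k,k+1\}$ the modules $\mathscr{D}^k{\sf L}_i$ and $\mathscr{D}^jL_j$ form an unmixed pair with invertible R-matrix, so $\mathscr{D}^k{\sf L}_i$ slides freely through all such factors inside the head. The computation of $\widetilde{F}_{(i,k)}(M)$ then reduces to the three-term head ${\rm hd}(\mathscr{D}^k{\sf L}_i \otimes \mathscr{D}^{k+1}L_{k+1} \otimes \mathscr{D}^kL_k)$ in one two-copy window.

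Finally I would match the two branches of $\widetilde{F}_{(i,k)}$ with the sign of $\widehat{\varepsilon}_{(i,k)}({\bf b}) = \varepsilon_i(b^{(k)}) - \varepsilon_i^\ast(b^{(k+1)})$. When $\widehat{\varepsilon}_{(i,k)}({\bf b}) \geq 0$, the R-matrix carrying $\mathscr{D}^k{\sf L}_i$ past $\mathscr{D}^{k+1}L_{k+1}$ is an isomorphism, so $\mathscr{D}^k{\sf L}_i$ reaches the copy-$k$ slot and, by the first step applied to $\mathscr{D}^k\mathcal{D}$, produces $\widetilde{f}_i(b^{(k)})$. When $\widehat{\varepsilon}_{(i,k)}({\bf b}) < 0$, this R-matrix fails to be invertible and $\mathscr{D}^k{\sf L}_i$ is absorbed at copy $k+1$; writing $\mathscr{D}^k{\sf L}_i = \mathscr{D}^{-1}(\mathscr{D}^{k+1}{\sf L}_i)$ exhibits it as the left dual of the datum-$(\mathscr{D}^{k+1}\mathcal{D})$ fundamental module, so it produces $\widetilde{e}_i^\ast(b^{(k+1)})$. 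These are exactly the two branches in Definition~\ref{def:extended crystal}, yielding $\Phi_\mathcal{D}\circ\widetilde{F}_{(i,k)} = \widetilde{F}_{(i,k)}\circ\Phi_\mathcal{D}$, and the mirror computation settles $\widetilde{E}_{(i,k)}$.

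The hard part will be this last dichotomy: showing that the transition from ``$\mathscr{D}^k{\sf L}_i$ reaches copy $k$'' to ``$\mathscr{D}^k{\sf L}_i$ is absorbed at copy $k+1$'' occurs exactly at the sign change of $\widehat{\varepsilon}_{(i,k)}$. This requires controlling the three-term head through the pole orders of the relevant R-matrices and identifying those orders with the invariants $\varepsilon_i(b^{(k)})$ and $\varepsilon_i^\ast(b^{(k+1)})$, together with checking that right duality exchanges $\widetilde{f}_i^\ast$ and $\widetilde{e}_i^\ast$ compatibly with the $\ast$-involution on $B(\infty)$. Steps one and two are formal once the categorification and the commutation lemmas are invoked, so the representation-theoretic core of the theorem is the numerical dichotomy above.
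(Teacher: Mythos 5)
First, a point of order: the paper you were given contains \emph{no proof} of this statement. It is recalled verbatim from \cite[Theorem 5.10]{KP22} and used as a black box (indeed the statement as reproduced even has a typo: both displayed identities are about $\widetilde{E}_{(i,k)}$, one should involve $\widetilde{F}_{(i,k)}$). So your sketch can only be measured against the actual argument of Kashiwara--Park. Your first two steps are consistent with that argument: the head-compatibilities ${\rm hd}(\mathsf{L}'_i \otimes \mathcal{L}_{\mathcal{D}'}(b)) = \mathcal{L}_{\mathcal{D}'}(\widetilde{f}_i b)$ and ${\rm hd}(\mathcal{L}_{\mathcal{D}'}(b) \otimes \mathsf{L}'_i) = \mathcal{L}_{\mathcal{D}'}(\widetilde{f}_i^\ast b)$ do come from Lauda--Vazirani transported through the exact monoidal duality functor (using that a real simple tensored with a simple has simple head), and the vanishing $\mathfrak{d}(M, \mathscr{D}^{j}N) = 0$ for $|j| \geq 2$ with $M, N$ simple in $\mathcal{C}_\mathcal{D}$ is what localizes the inserted factor to the window of slots $k$ and $k+1$.

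The genuine gap is in your third step, which you correctly identify as the core but for which your proposed mechanism would fail. Invertibility of the R-matrix between $\mathscr{D}^k\mathsf{L}_i$ and $\mathscr{D}^{k+1}L_{k+1}$ is equivalent to $\mathfrak{d}(\mathsf{L}_i, \mathscr{D}L_{k+1}) = 0$, and this is a condition on the single slot $b^{(k+1)}$ (in the KKOP/KP framework this $\mathfrak{d}$-invariant categorifies $\varepsilon_i^\ast(b^{(k+1)})$); it cannot detect the \emph{sign of a difference} $\widehat{\varepsilon}_{(i,k)}({\bf b}) = \varepsilon_i(b^{(k)}) - \varepsilon_i^\ast(b^{(k+1)})$, which depends on both slots. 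Concretely, take $\varepsilon_i^\ast(b^{(k+1)}) = 1$ and $\varepsilon_i(b^{(k)}) = 5$: then $\widehat{\varepsilon}_{(i,k)}({\bf b}) = 4 \geq 0$, so by Definition \ref{def:extended crystal} the operator $\widetilde{F}_{(i,k)}$ must act by $\widetilde{f}_i$ on $b^{(k)}$; but your R-matrix is \emph{not} invertible, so your rule would absorb $\mathscr{D}^k\mathsf{L}_i$ at slot $k+1$ and produce $\widetilde{e}_i^\ast(b^{(k+1)})$ --- the wrong branch. The two cases you distinguish (R-matrix invertible vs.\ not) simply do not align with the two branches of the definition. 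The actual proof in \cite{KP22} resolves a genuine competition between the two neighbors by comparing the invariants of $\mathsf{L}_i$ against \emph{both} of them (the $\mathfrak{d}$/$\Lambda$-invariants against $\mathscr{D}L_{k+1}$ and against $\mathscr{D}^{-1}L_k$, categorifying $\varepsilon_i^\ast(b^{(k+1)})$ and $\varepsilon_i(b^{(k)})$ respectively), combined with a normal-sequence analysis of the three-fold head; no single invertibility test suffices. A secondary inaccuracy: your locality claim is false for $j = k-1$, since $\mathfrak{d}(\mathscr{D}^k\mathsf{L}_i, \mathscr{D}^{k-1}L_{k-1})$ categorifies $\varepsilon_i(b^{(k-1)})$ and need not vanish; this is harmless only because those factors sit on the far side of the insertion point, which an honest write-up must say and use.
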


On the other hand, we revisit the category $\mathcal{C}_\mathfrak{g}^0$. 
Recall that we associate a convex order on the set of positive roots with a reduced expression $\underline{w_0} = s_{i_1} \dots s_{i_\ell}$ by
\[ \beta_k = s_{i_1} \dots s_{i_{k-1}} (\alpha_{i_k}) \]
for $k \geq 1$ (cf. Section \ref{subsec:PBW}). Let $\{ {\sf V}_k \}_{1 \leq k \leq \ell}$ be the cuspidal $R_{\mathfrak{g}_{\rm fin}}$-modules (see \cite{Mc15}) associated with $\underline{w_0}$ and
${\sf S}_k := \mathcal{F}_\mathcal{D}({\sf V}_k)$ for $1 \leq k \leq \ell$. 
We remark that ${\sf V}_k$ corresponds to the dual PBW vector corresponding to $\beta_k$ under the isomorphism \cite[(2.2)]{Mc15}. 
Now, we extend a set $\{ i_k \}_{1 \leq k \leq \ell}$ to $\{ i_k \}_{k \in \mathbb{Z} }$ by $i_{k+\ell} = (i_k)^\ast$ for all $k \in \mathbb{Z}$, 
where $i^\ast$ is the unique element of $I_{\rm fin}$ such that $\alpha_{i^\ast} = -w_0\alpha_i$ for $i \in I_{\rm fin}$. 
In addition, we extend a collection $\{ {\sf S}_k \}_{1 \leq k \leq \ell }$ of modules to $\{ {\sf S}_k \}_{k \in \mathbb{Z} }$ by 
${\sf S}_{k+\ell} = \mathscr{D}({\sf S}_k)$ for all $k \in \mathbb{Z}$. 
The modules ${\sf S}_k$ are called the affine cuspidal modules corresponding to $\mathcal{D}$ and $\underline{w_0}$ (cf. \cite[Section 3]{KP22}). 

For a given complete duality datum $\mathcal{D}$ and a fixed reduced expression $\underline{w_0}$ for $\mathfrak{g}_{\rm fin}$, 
we define a standard module ${\sf P}_{\mathcal{D}, \underline{w_0}}({\bf a})$ by 
\[ {\sf P}_{\mathcal{D}, \underline{w_0}}({\bf a}) = \dots \otimes {\sf S}_2^{\otimes a_2} \otimes {\sf S}_1^{\otimes a_1} \otimes {\sf S}_0^{\otimes a_0} \otimes {\sf S}_{-1}^{\otimes a_{-1}} \otimes \dots \]
for ${\bf a} = (a_k)_{k \in \mathbb{Z}} \in \mathbb{Z}_+^{\oplus \mathbb{Z}}$.
It is proved in \cite[Theorem 6.10]{KKOP24} that 
the head of ${\sf P}_{\mathcal{D}, \underline{w_0}}({\bf a})$ is simple, and 
that any simple module $M$ in $\mathcal{C}_\mathfrak{g}^0$ (or an element $M \in \mathscr{B}_\mathcal{D}(\mathfrak{g})$) 
is written as the head of ${\sf P}_{\mathcal{D}, \underline{w_0}}({\bf a})$ for some unique ${\bf a} \in \mathbb{Z}_+^{\oplus \mathbb{Z}}$. 
Hence, when $\mathcal{D}$ and $\underline{w_0}$ are fixed, we describe $M \in \mathscr{B}_\mathcal{D}(\mathfrak{g})$ as the unique sequence of nonnegative integers, say ${\bf a}(M)$. 
By the construction of (affine) cuspidal modules, the sequence ${\bf a}(M)$ consists of PBW data of all components of ${\bf b} = (b_k) \in \widehat{B}_{\mathfrak{g}_{\rm fin}}(\infty)$. 
In other words, if $\Phi_\mathcal{D}({\bf b}) = M$ and ${\bf a}(M) = (a_k)_{k \in \mathbb{Z}}$, 
then $(a_{\ell + k\ell}, \dots, a_{1+k\ell})$ coincides with the PBW datum of $b_k$ for $k \in \mathbb{Z}$ (associated with $\underline{w_0}$).

\subsection{Examples} \label{subsec:categorical crystal example}
We exemplify the action of extended crystal operators on $\mathscr{B}_\mathcal{D}(\mathfrak{g})$ when $\mathfrak{g}$ is of type $A_n^{(1)}$ and $D_n^{(1)}$. 
We choose the reduced expression as \eqref{eqn:reduced} for all examples of this section.

Before introducing examples, we briefly explain diagrams given in examples. 
They illustrate PBW datum of a given simple module $M$. 
The point at $(i, p) \in \sigma_0(\mathfrak{g})$ designates the fundamental representation $V(\varpi_i)_{(-q)^p}$ 
which corresponds to an affine cuspidal module ${\sf S}_k$ for some $k$ in our examples. 
The integer at $(i, p)$ means the integer $a_k$ which is the (tensor) exponent of the corresponding affine cuspidal modules ${\sf S}_k$. 
Note that we underline numbers at the $0$-th position and empty positions are assumed to be filled with $0$.
\begin{rem}
    With the help of the $q$-character theory of quantum affine algebras (cf. \cite{FM01, FR99}), 
    we can parametrize any simple module in $\mathcal{C}_\mathfrak{g}$ using dominant monomials (cf. \cite{Naj03}). 
    Indeed, it is proved in \cite{CP91,CP95} that any simple module in $\mathcal{C}_\mathfrak{g}$ is isomorphic to 
    a quotient of a tensor product of fundamental modules $V(\varpi_i)_a$ ($i \in I, a \in \mathbb{C}^\times$). 
    Meanwhile, the $q$-character of a simple module $M$ in $\mathcal{C}_\mathfrak{g}$ contains a unique highest weight dominant monomial, 
    so a given simple module can be characterized by the dominant monomial. 
    Note that the dominant monomial corresponding to the fundamental module $V(\varpi_i)_a$ is $Y_{i, a}$. 
    In the following examples, we additionally declare the dominant monomial corresponding to given simple modules. 
    Due to the choice of $\sigma_0(\mathfrak{g})$ and $\mathcal{D}$, dominant monomials appearing in the examples 
    are monomials in variables $\{ Y_{i, (-q)^p} \,|\, i \in I, p \in \mathbb{Z} \}$, and we simply denote $Y_{i,p} = Y_{i, (-q)^p}$.
\end{rem}
\begin{ex}[Type $A_3$]
    Let
    \[ \mathcal{D} = \{ L_1 = V(\varpi_1)_0, \ L_2 = V(\varpi_1)_{(-q)^2}, \ L_3 = V(\varpi_1)_{(-q)^4} \} \]
    be a complete duality datum and consider a simple module $M = \Phi_\mathcal{D}({\bf b})$ where 
    \[ {\bf b} = (\dots, {\bf 1}, (1,2,0,0,1,2), \underline{(1,2,2,1,0,3)}, {\bf 1}, \dots). \]
    Note that the dominant monomial corresponding to $M$ is $Y_{3,8}Y_{2,7}^2Y_{2,5}Y_{3,4}^2Y_{1,4}Y_{2,3}^2Y_{3,2}^2Y_{1,2}Y_{1,0}^3$.
    \[ \begin{tikzpicture}
        \foreach \x in {-1,0,1,2,...,8}
            {\node at (\x, 0) {\x};}
        \foreach \y in {1, 2, 3}
            {\node at (-2, -\y) {\y};}
        \draw (-2.5, -0.5) -- (8.5, -0.5);
        \draw (-1.5, 0.5) -- (-1.5, -3.5);
        \draw (-2.5, 0.5) -- (-1.5, -0.5);
        \node[above right] at (-2, 0) {$p$};
        \node[below left] at (-2, 0) {$i$};

        \draw[thick] (-1, -1) -- (1, -3);
        \draw[thick] (3, -3) -- (5, -1);
        \draw[thick] (7, -1) -- (9, -3);

        \node at (0, -1) {$\underline{3}$};
        \node at (1, -2) {$\underline{0}$};
        \node at (2, -1) {$\underline{1}$};
        \node at (2, -3) {$\underline{2}$};
        \node at (3, -2) {$\underline{2}$};
        \node at (4, -1) {$\underline{1}$};
        
        \node at (4, -3) {$2$};
        \node at (5, -2) {$1$};
        \node at (6, -3) {$0$};
        \node at (6, -1) {$0$};
        \node at (7, -2) {$2$};
        \node at (8, -3) {$1$};
    \end{tikzpicture} \]
    We directly check that $\widetilde{F}_{(1,0)}({\bf b}) = (\dots,{\bf 1}, (1,2,0,0,1,2), \underline{(1,2,2,1,0,4)}, {\bf 1}, \dots)$. 
    In addition, the image of ${\bf b}$ and $\widetilde{F}_{(1,0)}({\bf b})$ under the bijection $\widehat{\phi}$, 
    which is the bijection obtained by applying the bijection $\phi$ \eqref{eqn:PBW-polyhed} to each component, are
    \begin{eqnarray*}
        \widehat{\phi}({\bf b}) &=& (\dots, {\bf 1}, \,(0,2,1,3,1,2),\, \underline{(2,4,0,5,1,3)},\, {\bf 1}, \dots), \\
        \widehat{\phi}(\widetilde{F}_{(1,0)}({\bf b})) &=& (\dots, {\bf 1}, \,(0,2,1,3,1,2),\, \underline{(2,4,0,5,1,4)},\, {\bf 1}, \dots),
    \end{eqnarray*}
    respectively. It is immediate from Example \ref{ex:A3 extended crystal} that $\widetilde{F}_{(1,0)}\widehat{\phi}({\bf b}) = \widehat{\phi}(\widetilde{F}_{(1,0)}({\bf b}))$.
    We remark that the dominant monomial corresponding to $\widetilde{F}_{(1,0)}(M)$ is $Y_{3,8}Y_{2,7}^2Y_{2,5}Y_{3,4}^2Y_{1,4}Y_{2,3}^2Y_{3,2}^2Y_{1,2}Y_{1,0}^4$.
\end{ex}

\begin{ex}[Type $D_4$]
    Let
    \[ \mathcal{D} = \{ L_1 = V(\varpi_1)_0, \ L_2 = V(\varpi_1)_{(-q)^2}, \ L_3 = V(\varpi_3)_{(-q)^6}, \ L_4 = V(\varpi_4)_{(-q)^6} \} \]
    be a complete duality datum and consider a simple module $M = \Phi_\mathcal{D}({\bf b})$ where 
    \[ {\bf b} = {\bf a}(M) = (\dots, {\bf 1}, (0,0,0,1,0,0,0,0,1,0,1,0), \underline{(0,1,0,0,0,0,1,0,1,0,0,0)}, {\bf 1}, \dots). \]
    Note that the dominant monomial corresponding to $M$ is $Y_{10,1}Y_{8,4}Y_{7,2}Y_{6,3}Y_{3,2}Y_{2,4}$.
    \[ \begin{tikzpicture}[xscale=0.9]
        \foreach \x in {0,1,2,...,12}
            {\node at (\x, 0) {\x};}
        \foreach \y in {1, 2, 3, 4}
            {\node at (-1, -\y) {\y};}
        \draw (-1.5, -0.5) -- (12.5, -0.5);
        \draw (-0.5, 0.5) -- (-0.5, -4.5);
        \draw (-1.5, 0.5) -- (-0.5, -0.5);
        \node[above right] at (-1, 0) {$p$};
        \node[below left] at (-1, 0) {$i$};

        \draw[thick] (0, -1.5) -- (1, -2.5) -- (1, -4.5);
        \draw[thick] (5, -1) -- (7, -2.5) -- (7, -4.5);
        \draw[thick] (11, -1) -- (13, -2.5) -- (13, -4.5);

        \node at (0, -1) {$\underline{0}$};
        \node at (1, -2) {$\underline{0}$};
        \node at (2, -3) {$\underline{0}$};
        \node at (2, -4) {$\underline{1}$};
        \node at (2, -1) {$\underline{0}$};
        \node at (3, -2) {$\underline{1}$};
        \node at (4, -3) {$\underline{0}$};
        \node at (4, -4) {$\underline{0}$};
        \node at (4, -1) {$\underline{0}$};
        \node at (5, -2) {$\underline{0}$};
        \node at (6, -3) {$\underline{1}$};
        \node at (6, -4) {$\underline{0}$};
        
        \node at (6, -1) {$0$};
        \node at (7, -2) {$1$};
        \node at (8, -3) {$0$};
        \node at (8, -4) {$1$};
        \node at (8, -1) {$0$};
        \node at (9, -2) {$0$};
        \node at (10, -3) {$0$};
        \node at (10, -4) {$0$};
        \node at (10, -1) {$1$};
        \node at (11, -2) {$0$};
        \node at (12, -3) {$0$};
        \node at (12, -4) {$0$};
    \end{tikzpicture} \]
    We directly check that
    \[ \widetilde{F}_{(1,0)}({\bf b}) = (\dots, {\bf 1}, (0,0,0,1,0,0,0,0,1,0,1,0), \underline{(0,1,0,0,0,0,1,0,2,0,0,0)}, {\bf 1}, \dots). \] 
    In addition, the image of ${\bf b}$ and $\widetilde{F}_{(1,0)}({\bf b})$ under the bijection $\widehat{\phi}$ are
    \begin{eqnarray*}
        \widehat{\phi}({\bf b}) &=& (\dots, {\bf 1}, (0,0,0,2,0,0,2,1,2,1,1,0), \underline{(0,0,0,2,0,1,3,0,2,1,0,0)}, {\bf 1}, \dots), \\
        \widehat{\phi}(\widetilde{F}_{(1,0)}{\bf b}) &=& (\dots, {\bf 1}, (0,0,0,2,0,0,2,1,2,1,1,0), \underline{(0,0,0,3,0,1,3,0,2,1,0,0)}, {\bf 1}, \dots),
    \end{eqnarray*}
    respectively. It is immediate from Example \ref{ex:D4 extended crystal} that $\widetilde{F}_{(1,0)}\widehat{\phi}({\bf b}) = \widehat{\phi}(\widetilde{F}_{(1,0)}({\bf b}))$. 
    We remark that the dominant monomial corresponding to $\widetilde{F}_{(1,0)}(M)$ is $Y_{10,1}Y_{8,4}Y_{7,2}Y_{6,3}Y_{3,2}Y_{2,4}^2$.
\end{ex}

\section{Proof of Theorem \ref{thm:bicrystal}} \label{sec:bicrystal proof}
The goal of this section is to check the conditions appearing in Proposition \ref{prop:bicrystal}, which implies Theorem \ref{thm:bicrystal}. 
The conditions are following.
\begin{enumerate}
    \item[(0)] $(\mathcal{B}(\infty), {\rm wt}, \widetilde{e}_i^\ast, \widetilde{f}_i^\ast, \varepsilon_i^\ast, \varphi_i^\ast)$ is a highest weight $U_q(\mathfrak{g})$-crystal.
    \item $\widetilde{f}_i(b) \neq {\bf 0}$, $\widetilde{f}_i^\ast(b) \neq {\bf 0}$
    \item $B$ has the unique highest weight vector $b_0$ with weight $0$, that is, $\widetilde{e}_i(b_0) = \widetilde{e}_i^\ast(b_0) = 0$ for all $i \in I$.
    \item $\widetilde{f}_i\widetilde{f}_j^\ast(b) = \widetilde{f}_j^\ast\widetilde{f}_i(b)$
    \item $\varepsilon_i(b) + \varepsilon_i^\ast(b) + \langle h_i, {\rm wt}(b) \rangle \geq 0$
    \item If $\varepsilon_i(b) + \varepsilon_i^\ast(b) + \langle h_i, {\rm wt}(b) \rangle = 0$, then $\widetilde{f}_i(b) = \widetilde{f}_i^\ast(b)$.
    \item If $\varepsilon_i(b) + \varepsilon_i^\ast(b) + \langle h_i, {\rm wt}(b) \rangle \geq 1$, then $\varepsilon_i^\ast(\widetilde{f}_i(b)) = \varepsilon_i^\ast(b)$ and $\varepsilon_i(\widetilde{f}_i^\ast(b)) = \varepsilon_i(b)$.
    \item If $\varepsilon_i(b) + \varepsilon_i^\ast(b) + \langle h_i, {\rm wt}(b) \rangle \geq 2$, then $\widetilde{f}_i^\ast \widetilde{f}_i(b) = \widetilde{f}_i \widetilde{f}_i^\ast(b)$.
\end{enumerate}
We can straightforwardly check the conditions (1) and (2) regardless of the type of $\mathfrak{g}$, 
and we focus on proving that the condition (0), (3), and (4)--(7) when $\mathfrak{g}$ is of type $A$. 
Even though key ideas to check the conditions (except (1) and (2)) are essentially same, 
details are much complicated due to the complexity of combinatorial (type-dependent) models. 
In this reason, we check the conditions when $\mathfrak{g}$ is of type $A_n$, 
and give a simple note on the proof of Theorem \ref{thm:bicrystal} for type $BD$ at the end of the section (see Appendix also). 

Meanwhile, when $\mathfrak{g}$ is of type $A_n$, $\Pi_i^\ast$ is a totally ordered set and 
so we can identify partitions in $\Pi_i$ and $\Pi_i^\ast$ with integers. 
In particular, the partition $\eta_k$ is identified with $k \in \mathbb{N}$. 
Under this identification, we use integers instead of partitions in this section. 
For example, we write expressions such as $\sumb_{(k)} = \sumb_k, \sumb_{(k)}^\ast = \sumb_k^\ast$ or $m_i(b) = k, m_i^\ast(b) = k$.

\subsection{Proof on the star crystal structure}
In this subsection, we prove the following proposition.
\begin{prop} \label{prop:polyhed star crystal}
    The pair $(\mathcal{B}(\infty), {\rm wt}, \varepsilon_i^\ast, \varphi_i^\ast, \widetilde{e}_i^\ast, \widetilde{f}_i^\ast)$ is a highest weight $U_q(\mathfrak{g})$-crystal (cf. Definition \ref{def:crystal star}).
\end{prop}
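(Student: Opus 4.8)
The plan is to verify directly the axioms of Definition \ref{def:crystal def} together with the two highest-weight conditions stated before Proposition \ref{prop:bicrystal}, using the explicit type $A_n$ formula $\partb_{s,t}^\ast = x_{s-1,t} - x_{s-1,t+1} - x_{s,t-1} + x_{s,t}$. Since $\Pi_i^\ast = \{(1),\dots,(i)\}$ is totally ordered, I identify it with $\{1,\dots,i\}$ and write $\sumb_k^\ast$ for the linear function attached to $(k)$. The computational backbone is the telescoping identity
\[ \sumb_k^\ast = \sum_{t=1}^k \partb_{t,i+1-t}^\ast = x_{k,i+1-k} - x_{k,i-k} \qquad (1 \le k \le i), \]
with the convention $x_{k,0} = 0$, so that $\sumb_k^\ast(b) = b_{k,i+1-k} - b_{k,i-k}$. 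I would prove this by collecting coefficients along the two anti-diagonals $s+t \in \{i, i+1\}$ and observing that all interior terms cancel. Writing $V_m^\ast := \sum_{t=1}^m {\bf v}(\mathcal{I}_{T_i^\ast}(1,t)) = \sum_{t=1}^m ({\bf e}_{t,i+1-t} - {\bf e}_{t-1,i+1-t})$ for the vector added by $\widetilde{f}_i^\ast$ when $m_i^\ast(b) = m$, the same bookkeeping gives ${\rm wt}(V_m^\ast) = -\alpha_i$ (the only surviving term is ${\bf e}_{1,i}$, since ${\bf e}_{0,i} = 0$), which yields the weight shifts in axioms (2) and (3); axiom (1) is the definition of $\varphi_i^\ast$, and axiom (5) is vacuous because $\varepsilon_i^\ast$ is always a nonnegative integer (indeed $\sumb_i^\ast(b) = b_{i,1} \ge 0$, so $\sigma_i^\ast(b) \ge 0$).

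Next I would compute the interaction matrix $A_{k,m} := \sumb_k^\ast(V_m^\ast)$ from the two displayed formulas, obtaining $A_{k,m} = 2$ if $k<m$, $A_{k,m}=1$ if $k=m$, and $A_{k,m}=0$ if $k>m$. Writing $\gamma_k = \sumb_k^\ast(b)$ and $\sigma = \sigma_i^\ast(b)$, adding $V_m^\ast$ with $m = m_i^\ast(b)$ the smallest maximizer leaves $\gamma_k$ fixed for $k>m$, sends $\gamma_m$ to $\sigma+1$, and raises each $\gamma_k$ with $k<m$ by $2$; as those satisfy $\gamma_k \le \sigma-1$, the new maximum is $\sigma+1$ with largest maximizer exactly $m$. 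Thus $\varepsilon_i^\ast(\widetilde{f}_i^\ast b) = \varepsilon_i^\ast(b)+1$ and $M_i^\ast(\widetilde{f}_i^\ast b) = m_i^\ast(b)$; the symmetric computation for $\widetilde{e}_i^\ast$ (subtracting $V_M^\ast$ with $M=M_i^\ast(b)$) gives $\varepsilon_i^\ast(\widetilde{e}_i^\ast b)=\varepsilon_i^\ast(b)-1$ and $m_i^\ast(\widetilde{e}_i^\ast b)=M_i^\ast(b)$. Axiom (4) is then formal: $\widetilde{e}_i^\ast(\widetilde{f}_i^\ast b) = (b+V_m^\ast) - V_{M_i^\ast(\widetilde{f}_i^\ast b)}^\ast = (b+V_m^\ast) - V_m^\ast = b$, and likewise $\widetilde{f}_i^\ast \widetilde{e}_i^\ast b = b$ whenever $\widetilde{e}_i^\ast b \ne {\bf 0}$. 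The highest-weight conditions follow too: $\varepsilon_i^\ast(b) = \max\{n : (\widetilde{e}_i^\ast)^n b \ne {\bf 0}\}$ is immediate from the $\varepsilon_i^\ast$-shift, and since $\sumb_i^\ast(b) = b_{i,1}$ forces ${\bf 1}$ to be the only element with all $\varepsilon_i^\ast = 0$ (a short induction down the anti-diagonals), repeatedly applying $\widetilde{e}_i^\ast$ strictly decreases $\sum_{s,t} b_{s,t}$ and terminates at ${\bf 1}$.

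The step I expect to be the main obstacle is that the operators are well defined as maps $\mathcal{B}(\infty) \to \mathcal{B}(\infty) \cup \{{\bf 0}\}$, i.e. that $\widetilde{f}_i^\ast b$ and (when nonzero) $\widetilde{e}_i^\ast b$ again satisfy the defining inequalities of $\mathcal{B}(\infty)$. Here $V_m^\ast$ raises the first $m$ entries of the chain $b_{1,i} \ge b_{2,i-1} \ge \cdots \ge b_{i,1}$ by $1$ and lowers the first $m-1$ entries of the chain $b_{1,i-1} \ge \cdots \ge b_{i-1,1}$ by $1$; positivity and all interior and junction inequalities are automatic except the single lowered junction $b_{m-1,i+1-m} - 1 \ge b_{m,i-m}$, which demands the strict inequality $b_{m-1,i+1-m} > b_{m,i-m}$. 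I would establish this by contradiction: if equality held, the chain inequality $b_{m-1,i+2-m} \ge b_{m,i+1-m}$ would force $\gamma_{m-1}(b) \ge \gamma_m(b)$, contradicting that $m$ is the smallest maximizer. The same mechanism, now using that $M$ is the largest maximizer, handles $\widetilde{e}_i^\ast$, while $\sigma > 0$ supplies the positivity $b_{M,i+1-M} \ge 1$ needed there. Assembling these verifications proves Proposition \ref{prop:polyhed star crystal}; the $BD$ cases, where $\Pi_i^\ast$ is only a poset, are handled by the same strategy and deferred to the Appendix as noted in the text.
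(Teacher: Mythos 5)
Correct, and essentially the paper's own proof: your interaction-matrix computation of $\sumb_k^\ast$ under adding $V_m^\ast$ is Lemma \ref{lem:sumb eqn A}, the maximizer-tracking step giving $M_i^\ast(\widetilde{f}_i^\ast b)=m_i^\ast(b)$ is Proposition \ref{prop:bicrystal inverse A}, and the strict junction inequality deduced from minimality of $m_i^\ast(b)$ (resp.\ maximality of $M_i^\ast(b)$, with $\sigma>0$ for positivity) is exactly how Proposition \ref{prop:bicrystal def A} proves well-definedness, there phrased via the functions $\boxed{j}^A_s$ of \cite{KaNa20} and Lemma \ref{lem:tab eqn A}. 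The only deviations are cosmetic: you derive $\varepsilon_i^\ast(b)\geq 0$ and the fact that ${\bf 1}$ is the unique element with all $\varepsilon_i^\ast=0$ directly from the telescoped formula $\sumb_k^\ast = x_{k,i+1-k}-x_{k,i-k}$, together with an explicit decreasing-sum termination argument for connectedness, where the paper instead cites Theorem \ref{thm:epsilon} and an inspection of $\Xi^{(i)}$.
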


To check the well-definedness of crystal operators, we introduce the following definition.
\begin{defn}[{\cite[Definition 3.3]{KaNa20}}]
    For $j \in \{ 1, 2, \dots, n+1 \}$ and $s \in \mathbb{Z}_+$, we set 
    \[ \boxed{j}^A_s = x_{s,j} - x_{s+1, j-1} \in (\mathbb{Q}^\infty)^\ast, \]
    where $x_{m,0} = x_{m, n+1} = 0$ for $m \in \mathbb{Z}_+$.
\end{defn}
In \cite[Corollary 3.9]{KaNa20}, the set $\Xi$ is combinatorially described using the above notation, that is, 
\[ \mathcal{B}(\infty) = \{\, {\bf x} \in \mathbb{Z}_+^\infty \ |\ \boxed{j}_s^A({\bf x}) \geq 0 \mbox{ for all } s \geq 1, 1 \leq j \leq n+1 \,\}. \]

\begin{lem}[{\cite[Lemma 3.4]{KaNa20}}] \label{lem:tab eqn A}
        When $\mathfrak{g}$ is of type $A_n$, we have
        \[ \boxed{j+1}_s^A = \boxed{j}_s^A - \partb_{s, j} \quad (1 \leq j \leq n,\ s \geq 1). \]
\end{lem}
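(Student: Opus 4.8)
The plan is to prove the identity by direct substitution, since both sides are explicit linear functions in the coordinates $x_{s,t}$ and the claim reduces to a finite comparison of coefficients. No auxiliary results are needed; the earlier definitions of $\boxed{j}_s^A$ and of $\partb_{s,j}$ in type $A_n$ are all the input required.

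First I would expand the right-hand side using the two defining formulas $\boxed{j}_s^A = x_{s,j} - x_{s+1,j-1}$ and $\partb_{s,j} = x_{s,j} - x_{s,j+1} - x_{s+1,j-1} + x_{s+1,j}$, so that
\[ \boxed{j}_s^A - \partb_{s,j} = \bigl(x_{s,j} - x_{s+1,j-1}\bigr) - \bigl(x_{s,j} - x_{s,j+1} - x_{s+1,j-1} + x_{s+1,j}\bigr). \]
I would then observe that the two occurrences of $x_{s,j}$ cancel, and that the term $-x_{s+1,j-1}$ coming from $\boxed{j}_s^A$ cancels the $+x_{s+1,j-1}$ produced by $-\partb_{s,j}$. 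What survives is $x_{s,j+1} - x_{s+1,j}$, which is precisely $\boxed{j+1}_s^A$ by definition.

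The one point requiring genuine care is the behavior at the two endpoints of the range $1 \le j \le n$. At $j=1$ the coordinate $x_{s+1,0}$ enters $\partb_{s,1}$, while at $j=n$ the coordinate $x_{s,n+1}$ enters both $\partb_{s,n}$ and $\boxed{n+1}_s^A$; I would invoke the stated conventions $x_{m,0} = x_{m,n+1} = 0$ to confirm that the cancellation above still yields the correct right-hand side in these two boundary cases. Because these conventions set exactly the relevant boundary terms to zero, the computation remains uniform across the whole range, and no case distinction survives in the final formula.

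I anticipate no real obstacle here: the statement is a coefficient-by-coefficient identity of linear forms, and the only subtlety is applying the boundary conventions consistently at $j=1$ and $j=n$. The value of the lemma is that it lets the $\partb_{s,j}$ be read off as successive differences of the box functions $\boxed{j}_s^A$, which is what I would use downstream to telescope the sums $\sumb_\lambda$ and establish well-definedness of the crystal operators.
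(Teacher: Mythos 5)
Your proof is correct: expanding the definitions gives $\boxed{j}_s^A - \partb_{s,j} = \bigl(x_{s,j} - x_{s+1,j-1}\bigr) - \bigl(x_{s,j} - x_{s,j+1} - x_{s+1,j-1} + x_{s+1,j}\bigr) = x_{s,j+1} - x_{s+1,j} = \boxed{j+1}_s^A$, and the conventions $x_{m,0} = x_{m,n+1} = 0$ make this computation uniform at the endpoints $j=1$ and $j=n$. The paper gives no proof of its own here (the lemma is quoted from \cite[Lemma 3.4]{KaNa20}), and your direct coefficient-by-coefficient verification is exactly the natural argument for this identity.
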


\begin{prop} \label{prop:bicrystal def A}
    For $b \in \mathcal{B}(\infty)$, the maps $\widetilde{e}_i^\ast : \mathcal{B}(\infty) \to \mathcal{B}(\infty) \cup \{ {\bf 0} \}$ and 
    $\widetilde{f}_i^\ast : \mathcal{B}(\infty) \to \mathcal{B}(\infty)$ given in Definition \ref{def:crystal star} are well-defined.
\end{prop}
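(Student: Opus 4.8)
The plan is to check directly that $\widetilde f_i^\ast(b)$ and $\widetilde e_i^\ast(b)$ (the latter only in the case $\varepsilon_i^\ast(b)=\sigma_i^\ast(b)>0$) again belong to $\mathcal B(\infty)$. By the combinatorial description $\mathcal B(\infty)=\{\,{\bf x}\in\mathbb Z_+^\infty \mid \boxed{j}_s^A({\bf x})\ge 0 \ \mbox{for all } s\ge 1,\ 1\le j\le n\,\}$ recalled above, it suffices to verify that both images have nonnegative coordinates and satisfy every $\boxed{j}_s^A\ge 0$. Setting $V_k=\sum_{t=1}^{k}{\bf v}(t,i+1-t)=\sum_{t=1}^{k}({\bf e}_{t,i+1-t}-{\bf e}_{t-1,i+1-t})$, Definition \ref{def:crystal star} applies $\widetilde f_i^\ast$ by adding $V_{m}$ with $m=m_i^\ast(b)$, and $\widetilde e_i^\ast$ by subtracting $V_{M}$ with $M=M_i^\ast(b)$. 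The first observation is that $V_k$ increments only entries on the anti-diagonal $\{(s,t):s+t=i+1\}$ and decrements only entries on the adjacent anti-diagonal $\{s+t=i\}$; since each $\boxed{j}_s^A=x_{s,j}-x_{s+1,j-1}$ compares two entries lying on a single anti-diagonal, only the inequalities supported on these two anti-diagonals can be disturbed, reducing the task to finitely many explicit comparisons.

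The second step is to establish the $\ast$-analogue of Lemma \ref{lem:tab eqn A}, namely
\[
\partb_{s,t}^\ast=\boxed{t}_{s-1}^A-\boxed{t+1}_{s-1}^A,
\]
which is immediate upon expanding both sides, together with its telescoping consequence
\[
\sumb_k^\ast = x_{k,\,i+1-k}-x_{k,\,i-k}\qquad(1\le k\le i),
\]
obtained by summing $\partb_{t,i+1-t}^\ast=(x_{t,i+1-t}-x_{t-1,i+2-t})-(x_{t,i-t}-x_{t-1,i+1-t})$ over $t$ (with the convention $x_{k,0}=0$); in particular $\sumb_i^\ast=x_{i,1}$. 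These identities are the bridge that turns the extremality of $m$ and $M$, which is a statement about the chain $\sumb_1^\ast(b),\dots,\sumb_i^\ast(b)$, into lower bounds on individual functions $\boxed{j}_s^A(b)$.

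For $\widetilde f_i^\ast$, adding $V_m$ increments a prefix of the weakly decreasing run on $\{s+t=i+1\}$ and decrements a prefix of the run on $\{s+t=i\}$. Incrementing a prefix never breaks a weakly decreasing run, so the only inequality that can fail is at the lower end of the decremented prefix, between the last decremented entry $b_{m-1,i+1-m}$ and the first unchanged entry $b_{m,i-m}$; keeping it valid and keeping $b_{m-1,i+1-m}$ nonnegative both amount to $\boxed{i+1-m}_{m-1}^A(b)\ge 1$. Minimality of $m$ gives $\sumb_{m-1}^\ast(b)<\sumb_m^\ast(b)$, i.e.\ $\partb_{m,i+1-m}^\ast(b)>0$, which by the identity reads $\boxed{i+1-m}_{m-1}^A(b)>\boxed{i+2-m}_{m-1}^A(b)\ge 0$, yielding the required bound; the case $m=1$ is trivial since then $V_1={\bf e}_{1,i}$ only increments.

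For $\widetilde e_i^\ast$ with $\sigma_i^\ast(b)>0$, subtracting $V_M$ decrements a prefix of the run on $\{s+t=i+1\}$ and increments a prefix of the run on $\{s+t=i\}$, so by the same reasoning the sole inequality at risk demands $\boxed{i+1-M}_M^A(b)\ge 1$. When $M<i$, maximality of $M$ gives $\sumb_{M+1}^\ast(b)<\sumb_M^\ast(b)$, i.e.\ $\partb_{M+1,i-M}^\ast(b)<0$, whence $\boxed{i+1-M}_M^A(b)>\boxed{i-M}_M^A(b)\ge 0$. The boundary value $M=i$ is where I expect the real difficulty, since there is no $\sumb_{M+1}^\ast$ to exploit; here the required inequality is $\boxed{1}_i^A(b)=b_{i,1}\ge 1$, which I would instead obtain from $\sumb_i^\ast=x_{i,1}$ and the hypothesis $\sigma_i^\ast(b)>0$, giving $b_{i,1}=\sumb_i^\ast(b)=\sigma_i^\ast(b)\ge 1$. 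In every case the critical boundary inequality simultaneously forces the smallest decremented entry to be at least $1$, so all coordinates stay nonnegative. Collecting these checks shows that each defining inequality of $\mathcal B(\infty)$ is preserved, which proves the proposition; the parallel verification for types $B$ and $D$ is heavier only in bookkeeping and is relegated to the Appendix.
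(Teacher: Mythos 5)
Your proposal is correct and follows essentially the same route as the paper's own proof: both reduce well-definedness to the box-function description of $\mathcal{B}(\infty)$ from \cite{KaNa20}, observe that applying $\widetilde f_i^\ast$ (resp.\ $\widetilde e_i^\ast$) can decrease exactly one function $\boxed{j}_s^A$, and bound that function below via the identity $\partb_{s,t}^\ast=\boxed{t}_{s-1}^A-\boxed{t+1}_{s-1}^A$ (equivalently Lemma \ref{lem:tab eqn A}, since $\partb_{s,t}^\ast=\partb_{s-1,t}$) combined with the strict inequality coming from the minimality of $m_i^\ast(b)$ (resp.\ maximality of $M_i^\ast(b)$). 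The only difference is expository: you also spell out the $\widetilde e_i^\ast$ case, including the boundary case $M_i^\ast(b)=i$ closed by $\sumb_i^\ast=x_{i,1}$ and $\sigma_i^\ast(b)>0$, which the paper dispatches with ``the similar argument applies.''
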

\begin{proof}
    Since we can apply the similar argument to $\widetilde{e}_i^\ast$, we only show that $\widetilde{f}_i^\ast(b) \in \mathcal{B}(\infty)$ for $b = (b_{s,t}) \in \mathcal{B}(\infty)$. 
    For $b \in \mathcal{B}(\infty)$ and $i \in I$, put $m_i^\ast(b) = \eta_k$ and $b' = \widetilde{f}_i^\ast(b) = (b_{s,t}')$. 
    Then we directly check 
    \[ \boxed{j}_s^A (b') = \begin{cases}
        \boxed{j}_s^A(b) + 1 & \mbox{if $(s, j)=(k, i+1-k)$}, \\
        \boxed{j}_s^A(b) -1 & \mbox{if $(s, j) = (k-1, i+1-k)$}, \\
        \boxed{j}_s^A(b) & \mbox{otherwise.}
    \end{cases} \]
    When $(s, j) \neq (k-1, i+1-k)$ for all $1 < k \leq i$, we have $\boxed{j}_s^A(b') \geq \boxed{j}_s^A(b) \geq 0$.

    Suppose $(s, j) = (k-1, i+1-k)$ for some $1 < k \leq i$ and we will prove that $\boxed{j}_{s}^A(b') \geq 0$. 
    Since $m_i^\ast(b) = \eta_k \in \Pi_i^\ast$, we have $\Gamma_{(k)}^\ast(b) > \Gamma_{(k-1)}^\ast(b)$, that is, $\partb_{k,i+1-k}^\ast(b) > 0$. We have
    \begin{eqnarray*}
        \boxed{i+1-k}_{k-1}^A(b') &=& \boxed{i+1-k}_{k-1}^A(b) - 1 \\
            &=& \boxed{i+2-k}_{k-1}^A(b) + (\partb_{k, i+1-k}^\ast(b) -1)
    \end{eqnarray*}
    by Lemma \ref{lem:tab eqn A}. Since two terms are nonnegative, we have
    \[ \boxed{i+1-k}_{k-1}^A(b') \geq 0. \]
\end{proof}

By the direct observation, we can find the difference between $\sumb_{s}^\ast(b)$ and $\sumb_{s}^\ast(\widetilde{f}_i^\ast(b))$.
\begin{lem} \label{lem:sumb eqn A}
    Suppose that $\mathfrak{g}$ is of type $A_n$.
    For $i \in I$, when $m_i^\ast(b) = 1 \ (\in \Pi_i^\ast)$, we have
    \[ \sumb_s^\ast(\widetilde{f}_i^\ast(b)) = \begin{cases}
        \sumb_s^\ast(b) + 1 & \mbox{if } s=1, \\
        \sumb_s^\ast(b) & \mbox{otherwise.}
    \end{cases} \]
    When $m_i^\ast(b) = k \ (\in \Pi_i^\ast)$ for some $k \geq 2$, we have
    \[ \sumb_s^\ast(\widetilde{f}_i^\ast(b)) = \begin{cases}
        \sumb_s^\ast(b) + 2 & \mbox{if } s<k, \\
        \sumb_s^\ast(b) + 1 & \mbox{if } s=k, \\
        \sumb_s^\ast(b) & \mbox{if } s>k.
    \end{cases} \]
    In particular, we have $\sumb_{k}^\ast(\widetilde{f}_i^\ast(b)) = \sumb_{k}^\ast(b) + 1$ for $k = m_i^\ast(b)$.
\end{lem}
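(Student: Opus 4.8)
The plan is to exploit the linearity of each $\sumb_s^\ast$. Since $\sumb_s^\ast = \sum_{r=1}^s \partb_{r,i+1-r}^\ast$ is a linear function on $\mathbb{Q}^\infty$, the difference $\sumb_s^\ast(\widetilde{f}_i^\ast(b)) - \sumb_s^\ast(b)$ depends only on the increment vector $\Delta := \widetilde{f}_i^\ast(b) - b$, and not on $b$ itself. Writing $k = m_i^\ast(b)$ and unwinding Definition \ref{def:crystal star} together with the shape of $T_i^\ast$ (so that $\mathcal{I}_{T_i^\ast}(1,t) = (t, i+1-t)$), I would first record
\[ \Delta \;=\; \sum_{t=1}^k {\bf v}(\mathcal{I}_{T_i^\ast}(1,t)) \;=\; \sum_{t=1}^k \left( {\bf e}_{t, i+1-t} - {\bf e}_{t-1, i+1-t} \right), \]
and note that the boundary convention ${\bf e}_{0,\cdot} = 0$ deletes the term ${\bf e}_{0,i}$ arising from $t=1$. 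In this way the entire lemma is reduced to the purely combinatorial evaluation of $\sumb_s^\ast(\Delta)$, which no longer references $b$.

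Next I would substitute the type-$A_n$ expansion $\partb_{r,i+1-r}^\ast = x_{r-1,i+1-r} - x_{r-1,i+2-r} - x_{r,i-r} + x_{r,i+1-r}$ and evaluate $\partb_{r,i+1-r}^\ast(\Delta)$ for each $r$ by pairing the four coordinate functions against the two families $\{{\bf e}_{t,i+1-t}\}$ and $\{{\bf e}_{t-1,i+1-t}\}$ occurring in $\Delta$. Most pairings cancel; after collecting the survivors (and again accounting for the suppressed $t=1$ negative term) the net contribution is, for $k \ge 2$,
\[ \partb_{r,i+1-r}^\ast(\Delta) = \begin{cases} 2 & r = 1, \\ 0 & 2 \le r \le k-1, \\ -1 & r \in \{k, k+1\}, \\ 0 & r \ge k+2, \end{cases} \]
whereas for $k = 1$ one obtains $+1$ at $r=1$, $-1$ at $r=2$, and $0$ otherwise. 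Summing over $r = 1, \dots, s$ then yields the partial sums $\sumb_s^\ast(\Delta)$: for $k \ge 2$ they equal $2$ on $1 \le s \le k-1$, drop to $1$ at $s=k$, and return to $0$ for $s \ge k+1$, while for $k=1$ they equal $1$ at $s=1$ and $0$ thereafter. These are precisely the claimed increments, and in particular $\sumb_k^\ast(\widetilde{f}_i^\ast(b)) = \sumb_k^\ast(b)+1$ in both regimes.

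The one point demanding genuine care — and the main obstacle — is the bookkeeping at $r=1$, i.e. the vanishing of ${\bf e}_{0,\cdot}$: it is exactly the missing negative term at $t=1$ that breaks the otherwise symmetric shape of $\Delta$ and produces the asymmetric two-line answer, namely the split between $k=1$ and $k \ge 2$. I would also verify the top range $k = i$ separately, where the index $r = k+1 = i+1$ never enters the partial sums because $\sumb_s^\ast$ is only defined for $s \le i$, so no out-of-tableau term $\partb_{i+1,0}^\ast$ is ever invoked. As an alternative organization, one could route the same computation through the box functions $\boxed{j}_s^A$, using Lemma \ref{lem:tab eqn A} and the increment formula for $\boxed{j}_s^A(\widetilde{f}_i^\ast(b))$ derived in the proof of Proposition \ref{prop:bicrystal def A}; this repackages the identical cancellations, but the direct coordinate evaluation above is the more transparent route here.
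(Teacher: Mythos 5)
Your proposal is correct and follows essentially the same route as the paper: the paper's proof likewise records the per-box increments of $\partb_{s,i+1-s}^\ast$ under $\widetilde{f}_i^\ast$ (namely $+1$ at $s=1$, $-1$ at $s=2$ when $m_i^\ast(b)=1$, and $+2$ at $s=1$, $-1$ at $s=k,k+1$ when $m_i^\ast(b)=k\geq 2$) and then sums them to get the stated changes in $\sumb_s^\ast$. Your explicit linearity framing via $\Delta = \widetilde{f}_i^\ast(b)-b$ and the coordinate pairing, including the boundary convention ${\bf e}_{0,\cdot}=0$, just makes the paper's "direct observation" fully explicit, and your computed values agree with the paper's.
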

\begin{proof}
    Put $b' = \widetilde{f}_i^\ast(b)$ with $b = (b_{s,t})$ and $b' = (b_{s,t}')$. 
    When $m_i^\ast(b) = 1$, we can observe
    \[ \partb_{s,i+1-s}^\ast(\widetilde{f}_i^\ast(b)) = \begin{cases}
        \partb_{s,i+1-s}^\ast(b) + 1 & \mbox{if } s=1, \\
        \partb_{s,i+1-s}^\ast(b) - 1 & \mbox{if } s=2, \\
        \partb_{s,i+1-s}^\ast(b) & \mbox{otherwise}.
    \end{cases} \]
    On the other hand, suppose $m_i^\ast(b) = k$ for some $k \geq 2$ and then we have
    \[ \partb_{s,i+1-s}^\ast (\widetilde{f}_i^\ast(b)) = \begin{cases}
        \partb_{s,i+1-s}^\ast(b) + 2 & \mbox{if } s = 1, \\
        \partb_{s,i+1-s}^\ast(b) - 1 & \mbox{if $s=k$ or $s=k+1$}, \\
        \partb_{s,i+1-s}^\ast(b) & \mbox{otherwise}.
    \end{cases} \]
    The statement follows from the above observations.
\end{proof}

Now, we can show that $\mathcal{B}(\infty)$ satisfies the condition (4) in Definition \ref{def:crystal def}. 
\begin{prop} \label{prop:bicrystal inverse A}
    For $b, b' \in \mathcal{B}(\infty)$, we have $\widetilde{f}_i^\ast(b) = b'$ if and only if $\widetilde{e}_i^\ast(b') = b$.
\end{prop}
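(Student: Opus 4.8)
The statement is precisely condition (4) of Definition~\ref{def:crystal def} for the star crystal structure, so the plan is to establish two mutually inverse relations: first that $\widetilde{e}_i^\ast\bigl(\widetilde{f}_i^\ast(b)\bigr) = b$ for every $b \in \mathcal{B}(\infty)$, which handles the implication $\widetilde{f}_i^\ast(b) = b' \Rightarrow \widetilde{e}_i^\ast(b') = b$; and second that $\widetilde{f}_i^\ast\bigl(\widetilde{e}_i^\ast(b')\bigr) = b'$ for every $b'$ with $\widetilde{e}_i^\ast(b') \neq {\bf 0}$, which handles the converse. Throughout I work in type $A_n$, where $\Pi_i^\ast$ is totally ordered and I identify $\eta_k$ with the integer $k$, so that $m_i^\ast$ and $M_i^\ast$ are honest integers. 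The engine of both directions is that each $\sumb_s^\ast$ is a linear function and that both $\widetilde{f}_i^\ast(b) - b$ and $b' - \widetilde{e}_i^\ast(b')$ equal the fixed vector $\sum_{(s,t)\in\eta_k}{\bf v}(\mathcal{I}_{T_i^\ast}(s,t))$ for the relevant $k$; hence the increments of the $\sumb_s^\ast$ recorded in Lemma~\ref{lem:sumb eqn A} depend only on $k$ and not on $b$, and the corresponding increments for $\widetilde{e}_i^\ast$ are their negatives.

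For the forward direction, set $k = m_i^\ast(b)$ and $b' = \widetilde{f}_i^\ast(b)$. Applying Lemma~\ref{lem:sumb eqn A} I read off $\varepsilon_i^\ast(b') = \sigma_i^\ast(b') = \sigma_i^\ast(b) + 1$; since $\varepsilon_i^\ast \geq 0$ this is positive, so $\widetilde{e}_i^\ast(b')$ is genuinely defined. The crux is to pin down $M_i^\ast(b') = k$: for $s > k$ one has $\sumb_s^\ast(b') = \sumb_s^\ast(b) \leq \sigma_i^\ast(b) < \sigma_i^\ast(b')$, so no index above $k$ attains the new maximum, while $\sumb_k^\ast(b') = \sigma_i^\ast(b')$ shows that $k$ does attain it. Consequently $\widetilde{e}_i^\ast(b')$ subtracts exactly $\sum_{(s,t)\in\eta_k}{\bf v}(\mathcal{I}_{T_i^\ast}(s,t))$, which is precisely the vector $\widetilde{f}_i^\ast$ had added, giving $\widetilde{e}_i^\ast(b') = b$.

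For the reverse direction, suppose $\widetilde{e}_i^\ast(b') = b$, put $k = M_i^\ast(b')$, and note $b' - b = \sum_{(s,t)\in\eta_k}{\bf v}(\mathcal{I}_{T_i^\ast}(s,t))$. By the linearity remark above (Lemma~\ref{lem:sumb eqn A} with a sign flip), $\sumb_s^\ast(b)$ drops by $2$ for $s<k$, by $1$ for $s=k$, and is unchanged for $s>k$; this unified formula specializes correctly to the case $k=1$. Using that $M_i^\ast(b') = k$ forces $\sumb_s^\ast(b') < \sigma_i^\ast(b')$ strictly for $s>k$ and $\sumb_s^\ast(b') \leq \sigma_i^\ast(b')$ for $s<k$, integrality then yields $\sigma_i^\ast(b) = \sigma_i^\ast(b') - 1$ with minimal attaining index equal to $k$, i.e. $m_i^\ast(b) = k$. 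Therefore $\widetilde{f}_i^\ast(b)$ adds back $\sum_{(s,t)\in\eta_k}{\bf v}(\mathcal{I}_{T_i^\ast}(s,t))$ and recovers $b'$.

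The main obstacle, and the only place where the argument is not purely formal, is verifying that applying $\widetilde{f}_i^\ast$ (resp. $\widetilde{e}_i^\ast$) keeps the extremal attaining index equal to $k$ in the correct sense, namely $M_i^\ast(\widetilde{f}_i^\ast(b)) = m_i^\ast(b)$ and $m_i^\ast(\widetilde{e}_i^\ast(b')) = M_i^\ast(b')$. This rests on combining the strict inequalities that characterize the minimal and maximal attaining indices with integrality and with the fact that the increment for indices on the far side of $k$ vanishes while the increment at $k$ is exactly $\pm 1$, so that the $\pm 2$ increments for the remaining indices never overtake the shifted value at $k$. Once this bookkeeping is confirmed both compositions collapse to the identity and the equivalence follows.
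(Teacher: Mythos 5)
Your proposal is correct and takes essentially the same approach as the paper: both arguments hinge on Lemma~\ref{lem:sumb eqn A} combined with strictness-plus-integrality bookkeeping to show that the extremal attaining index is preserved, i.e. $M_i^\ast(\widetilde{f}_i^\ast(b)) = m_i^\ast(b)$, so that $\widetilde{e}_i^\ast$ removes exactly the vector $\widetilde{f}_i^\ast$ added. The only difference is one of completeness: you prove both implications explicitly (using the linearity remark to transport the increment formulas to the $\widetilde{e}_i^\ast$ direction), whereas the paper proves only the forward implication and declares the converse symmetric.
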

\begin{proof}
    Since the argument can be applied similarly, it is sufficient to show that if $b' = \widetilde{f}_i^\ast(b)$, 
    then $b = \widetilde{e}_i^\ast(b')$. It is equivalent to show that
    \[ \mbox{if } m_i^\ast(b) = m, \quad \mbox{then } M_i^\ast(b') = m, \]
    that is, we show $\sumb_m^\ast(b') \geq \sumb_k^\ast(b')$ for $k < m$ and 
    $\sumb_m^\ast(b') > \sumb_k^\ast(b')$ for $k > m$. 
    
    Let $m = m_i^\ast(b)$. For $k < m$, we have
    \[ \sumb_m^\ast(b') - \sumb_k^\ast(b') = \sumb_m^\ast(b) - \sumb_k^\ast(b) - 1 \]
    by Lemma \ref{lem:sumb eqn A}. Since $\sumb_m^\ast(b) > \sumb_k^\ast(b)$, we have $\sumb_m^\ast(b') \geq \sumb_k^\ast(b')$.
    On the other hand, for $k > m$, we have
    \[ \sumb_m^\ast(b') - \sumb_k^\ast(b') = \sumb_m^\ast(b) - \sumb_k^\ast(b) + 1 \]
    by Lemma \ref{lem:sumb eqn A}. Since $\sumb_m^\ast(b) \geq \sumb_k^\ast(b)$, we have $\sumb_m^\ast(b') > \sumb_k^\ast(b')$, which completes the proof.
\end{proof}

Now, we show that $\mathcal{B}(\infty)$ satisfies the axioms for (abstract) crystals.
\begin{proof}[Proof of Proposition \ref{prop:polyhed star crystal}]
    First of all, the crystal operators $\widetilde{e}_i^\ast$ and $\widetilde{f}_i^\ast$ are well-defined by Proposition \ref{prop:bicrystal def A}. 
    We easily show that all the other maps are well-defined. 

    Now, we show that $\mathcal{B}(\infty)$ satisfies the axioms given in Definition \ref{def:crystal def}. 
    The condition (1) holds by definition, and the condition (5) is vacuously true. 
    The condition (4) is proved by Proposition \ref{prop:bicrystal inverse A}.
    Since the condition (3) can be proved by the similar argument for the condition (2), we check the condition (2).

    Suppose $\widetilde{e}_i^\ast(b) \neq {\bf 0}$ for $b \in \mathcal{B}(\infty)$. 
    By the note below Definition \ref{def:crystal usual}, we have ${\rm wt}(\widetilde{e}_i^\ast b) = {\rm wt}(b) + \alpha_i$. 
    If we set $\varepsilon_i^\ast(b) = \varepsilon$, then
    \begin{eqnarray*}
        && (\widetilde{e}_i^\ast)^\varepsilon (\widetilde{e}_i^\ast b) = (\widetilde{e}_i^\ast)^{\varepsilon+1}(b) = {\bf 0}, \\
        && (\widetilde{e}_i^\ast)^{\varepsilon-1} (\widetilde{e}_i^\ast b) = (\widetilde{e}_i^\ast)^\varepsilon(b) \neq {\bf 0}.
    \end{eqnarray*}
    The first equation implies that $\varepsilon_i^\ast(\widetilde{e}_i^\ast b) < \varepsilon$, and
    the second equation implies that $\varepsilon_i^\ast(\widetilde{e}_i^\ast b) \geq \varepsilon -1$. 
    Thus, $\varepsilon_i^\ast(\widetilde{e}_i^\ast b) = \varepsilon -1$. 
    On the other hand, we have
    \begin{eqnarray*}
        \varphi_i^\ast (\widetilde{e}_i^\ast b)
        &=& \varepsilon_i^\ast (\widetilde{e}_i^\ast b) + \langle h_i, {\rm wt}(\widetilde{e}_i^\ast b) \rangle \\
        &=& \varepsilon_i^\ast (b) -1 + \langle h_i, {\rm wt}(b) + \alpha_i \rangle \\
        &=& \varepsilon_i^\ast(b) + \langle h_i, {\rm wt}(b) \rangle +1 \\
        &=& \varphi_i^\ast(b) + 1.
    \end{eqnarray*}

    It remains to show that $(\mathcal{B}(\infty), {\rm wt}, \varepsilon_i^\ast, \varphi_i^\ast, \widetilde{e}_i^\ast, \widetilde{f}_i^\ast)$ is a highest weight crystal. 
    For $b \in \mathcal{B}(\infty)$ and $i \in I$, we have $\varepsilon_i^\ast(b) \geq 0$ as a corollary of Theorem \ref{thm:epsilon}, 
    and $\widetilde{e}_i^\ast(b) = {\bf 0}$ if and only if $\varepsilon_i^\ast(b) = 0$ by definition. 
    We can show that if $\widetilde{e}_i^\ast(b) = {\bf 0}$ for all $i \in I$, or equivalently $\varepsilon_i^\ast(b) = 0$ for all $i \in I$, 
    then we have $b = {\bf 1}$ by inspecting all linear functions in $\Xi^{(i)}$. 
    It means that any $b \in \mathcal{B}(\infty)$ is connected to ${\bf 1}$ by applying a sequence of $\widetilde{e}_i^\ast$'s. 
    In addition, when we set $t = \max\{ k \geq 0 \,|\, (\widetilde{e}_i^\ast)^k(b) \neq {\bf 0} \}$, 
    we have $(\widetilde{e}_i^\ast)^t(b) \neq {\bf 0}$ and $\widetilde{e}_i^\ast((\widetilde{e}_i^\ast)^t(b)) = {\bf 0}$, 
    or equivalently $\varepsilon_i^\ast((\widetilde{e}_i^\ast)^t(b)) = \varepsilon_i^\ast(b) - t = 0$. 
    Thus, $(\mathcal{B}(\infty), {\rm wt}, \varepsilon_i^\ast, \varphi_i^\ast, \widetilde{e}_i^\ast, \widetilde{f}_i^\ast)$ is a highest weight crystal.
\end{proof}

\subsection{Proof on the bicrystal structure}
In this subsection, we show the conditions (3), (4)-(7) given in Proposition \ref{prop:bicrystal}. 
We set
\[ {\rm jump}_i(b) = \varepsilon_i(b) + \varepsilon_i^\ast(b) + \langle h_i, {\rm wt}(b) \rangle \]
for $i \in I$ and $b \in \mathcal{B}(\infty)$.
\begin{prop} \label{prop:jump zero A}
    For any $i \in I$ and $b \in \mathcal{B}(\infty)$,
    \begin{enumerate}
        \item we have ${\rm jump}_i(b) = (\varepsilon_i(b) - \sumb_{1}(b)) + (\varepsilon_i^\ast(b) - \sumb_{1}^\ast(b))$. In particular, ${\rm jump}_i(b) \geq 0$.
        \item if ${\rm jump}_i(b) = 0$ for $b \in \mathcal{B}(\infty)$, then $\widetilde{f}_i(b) = \widetilde{f}_i^\ast(b)$.
    \end{enumerate}
\end{prop}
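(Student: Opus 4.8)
The plan is to prove (1) as an identity of linear functionals and then read off (2) from it. For (1), I would recast the target equation as the single assertion that
$\sumb_1 + \sumb_1^\ast + \langle h_i, {\rm wt}\rangle = 0$ holds identically as a linear function on $\mathbb{Q}^\infty$. Granting this, subtracting it from the definition ${\rm jump}_i(b) = \varepsilon_i(b) + \varepsilon_i^\ast(b) + \langle h_i, {\rm wt}(b)\rangle$ immediately yields the claimed formula ${\rm jump}_i(b) = (\varepsilon_i(b) - \sumb_1(b)) + (\varepsilon_i^\ast(b) - \sumb_1^\ast(b))$. Nonnegativity is then automatic: since $\varepsilon_i(b) = \sigma_i(b) = \max_k \sumb_k(b) \ge \sumb_1(b)$ and likewise $\varepsilon_i^\ast(b) = \sigma_i^\ast(b) \ge \sumb_1^\ast(b)$, both parenthesized summands are $\ge 0$.

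To establish the linear identity I would expand $\sumb_1 = \sum_{s=1}^{n+1-i}\partb_{s,i}$ using $\partb_{s,i} = x_{s,i} - x_{s,i+1} - x_{s+1,i-1} + x_{s+1,i}$. The two $i$-column families $\sum_s x_{s,i}$ and $\sum_s x_{s+1,i}$ overlap after an index shift, giving coefficient $2$ on $x_{s,i}$ for $s \ge 2$ and coefficient $1$ on $x_{1,i}$, the top term $x_{n+2-i,\,i}$ dropping out because $(n+2-i,i)\notin\mathcal{I}_n^A$; the families $-x_{s,i+1}$ and $-x_{s+1,i-1}$ contribute $-1$ to the $(i+1)$- and $(i-1)$-columns, where the conventions $x_{s,0}=x_{s,n+1}=0$ together with $\mathcal{I}_n^A$-membership dispose of the boundary terms. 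Adding $\sumb_1^\ast = \partb_{1,i}^\ast = x_{1,i} - x_{1,i-1}$ (the $x_{0,\,\cdot}$ terms vanish) restores the coefficient $2$ on $x_{1,i}$ and extends the $(i-1)$-column down to $s=1$, leaving $\sumb_1 + \sumb_1^\ast = 2\sum_s x_{s,i} - \sum_s x_{s,i+1} - \sum_s x_{s,i-1}$. This matches $-\langle h_i, {\rm wt}\rangle$ exactly, since $\langle h_i, \alpha_t\rangle$ equals $2, -1, -1$ for $t = i, i+1, i-1$ and $0$ otherwise. The box calculus of Lemma \ref{lem:tab eqn A}, via $\partb_{s,i} = \boxed{i}_s^A - \boxed{i+1}_s^A$, offers an alternative bookkeeping for the same telescoping.

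For (2), assume ${\rm jump}_i(b) = 0$. By (1) both nonnegative summands must vanish, so $\varepsilon_i(b) = \sumb_1(b)$ and $\varepsilon_i^\ast(b) = \sumb_1^\ast(b)$. The first equality says the maximal value $\sigma_i(b)$ is already attained at the smallest partition $\eta_1 \in \Pi_i$, hence $m_i(b) = \eta_1$; the second gives $m_i^\ast(b) = \eta_1$ in the same way. Reading the operators off Definition \ref{def:crystal usual} and Definition \ref{def:crystal star}, the single cell selected by $m_i = \eta_1$ is $T_i(1,1) = \partb_{1,i}$ and the one selected by $m_i^\ast = \eta_1$ is $T_i^\ast(1,1) = \partb_{1,i}^\ast$, both carrying the two-parameter index $(1,i)$. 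Thus $\widetilde{f}_i(b) = b + {\bf v}(1,i)$ and $\widetilde{f}_i^\ast(b) = b + {\bf v}(1,i)$, and since ${\bf v}(1,i) = {\bf e}_{1,i} - {\bf e}_{0,i} = {\bf e}_{1,i}$, the two coincide, giving $\widetilde{f}_i(b) = \widetilde{f}_i^\ast(b)$.

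The only place demanding genuine care is the boundary bookkeeping in the telescoping sum for $\sumb_1$ — tracking precisely which $x_{s,t}$ survive under $(s,t)\in\mathcal{I}_n^A$ together with the vanishing at $t=0,n+1$ and at $s=0$ — since an off-by-one there would spoil the coefficient match with $-\langle h_i,{\rm wt}\rangle$. Once the identity in (1) is secured, the deductions $m_i(b)=m_i^\ast(b)=\eta_1$ and the resulting coincidence of the two lowering operators on the shared index $(1,i)$ are essentially formal.
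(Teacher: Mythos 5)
Your proof is correct and follows essentially the same route as the paper's: both rest on the linear identity $\sumb_1 + \sumb_1^\ast + \langle h_i, {\rm wt}\rangle = 0$, deduce nonnegativity from $\varepsilon_i \ge \sumb_1$ and $\varepsilon_i^\ast \ge \sumb_1^\ast$, and observe that ${\rm jump}_i(b)=0$ forces $m_i(b) = m_i^\ast(b) = (1)$, whence $\widetilde{f}_i(b) = \widetilde{f}_i^\ast(b) = b + {\bf e}_{1,i}$. The only difference is that you supply the telescoping verification of the identity (with correct boundary bookkeeping) which the paper dismisses as "easily checked."
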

\begin{proof}
    We can easily check that
    \[ \langle h_i, {\rm wt}(b) \rangle = - \sumb_{1}(b) - \sumb_{1}^\ast(b). \]
    Since $\varepsilon_i(b) = \max\{ \sumb_k(b) \,|\, 1 \leq k \leq n+1-i \}$ and $\varepsilon_i^\ast(b) = \max\{ \sumb_k^\ast(b) \,|\, 1 \leq k \leq i \}$, 
    we have ${\rm jump}_i(b) \geq 0$.
    In addition, since $\varepsilon_i(b) \geq \sumb_{1}(b)$ and $\varepsilon_i^\ast(b) \geq \sumb_{1}^\ast(b)$, 
    ${\rm jump}_i(b) = 0$ if and only if $m_i(b) = 1$ and $m_i^\ast(b) = 1$. 
    In this case, we know $\widetilde{f}_i(b) = \widetilde{f}_i^\ast(b) = b + {\bf e}_{1,i}$.
\end{proof}

To consider the case ${\rm jump}_i(b) \neq 0$, we observe $\partb_{s,i+1-s}^\ast(\widetilde{f}_i(b))$ and $\partb_{s,i}(\widetilde{f}_i^\ast(b))$ for some $s$ 
as similarly as Lemma \ref{lem:sumb eqn A} to obtain the following lemma on $\sumb_k^\ast(\widetilde{f}_ib)$ and $\sumb_{l}(\widetilde{f}_i^\ast b)$.
\begin{lem} \label{lem:fixed sumb A}
    Assume that $\mathfrak{g}$ is of type $A_n$ and take $b \in \mathcal{B}(\infty)$. 
    For $i \in I$, let $m_i(b) = m$ for some $1 \leq m \leq n+1-i$ and $m_n^\ast(b) = {m^\ast}$ for some $1 \leq m^\ast \leq i$. 
    The followings hold:
    \begin{eqnarray*}
        \sumb_{s}^\ast(\widetilde{f}_i(b)) &=& \begin{cases}
            \sumb_{s}^\ast(b) + 1 & \mbox{if $m=1$ and $s = 1$,} \\
            \sumb_{s}^\ast(b) & \mbox{otherwise,}
        \end{cases} \\
        \sumb_{s}(\widetilde{f}_i^\ast(b)) &=& \begin{cases}
            \sumb_{s}(b) + 1 & \mbox{if $m^\ast = 1$ and $s=1$,} \\
            \sumb_{s}(b) & \mbox{otherwise.}
        \end{cases}
    \end{eqnarray*}
\end{lem}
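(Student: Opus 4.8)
The plan is to use the fact that both operators act as \emph{translations} by explicit integer vectors, while $\sumb_s$ and $\sumb_s^\ast$ are linear functions; consequently each difference $\sumb_s^\ast(\widetilde f_i(b))-\sumb_s^\ast(b)$ and $\sumb_s(\widetilde f_i^\ast(b))-\sumb_s(b)$ is simply the value of the relevant linear functional on the translation vector, and depends on $b$ only through $m=m_i(b)$, resp. $m^\ast=m_i^\ast(b)$. First I would record the two translations: by the note below Definition \ref{def:crystal usual} one has $\widetilde f_i(b)=b+{\bf e}_{m,i}$, and by Definition \ref{def:crystal star} one has
\[ \widetilde f_i^\ast(b)=b+{\bf w},\qquad {\bf w}=\sum_{t=1}^{m^\ast}\bigl({\bf e}_{t,i+1-t}-{\bf e}_{t-1,i+1-t}\bigr), \]
where ${\bf e}_{0,\cdot}=0$. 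Thus the whole lemma reduces to evaluating $\sumb_s^\ast$ on ${\bf e}_{m,i}$ and $\sumb_s$ on ${\bf w}$.

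For the first identity I would mimic the per-box computation of Lemma \ref{lem:sumb eqn A}: the change of the entry $T_i^\ast(1,t)=\partb_{t,i+1-t}^\ast=x_{t-1,i+1-t}-x_{t-1,i+2-t}-x_{t,i-t}+x_{t,i+1-t}$ under $b\mapsto b+{\bf e}_{m,i}$ is exactly its coefficient of $x_{m,i}$. Inspecting the four monomials shows this coefficient vanishes unless $m=1$, in which case it equals $+1$ at $t=1$ (from $x_{t,i+1-t}$) and $-1$ at $t=2$ (from $-x_{t-1,i+2-t}$). Summing over $t=1,\dots,s$ in $\sumb_s^\ast=\sum_{t=1}^{s}\partb_{t,i+1-t}^\ast$ then yields a net change of $+1$ precisely when $m=1$ and $s=1$; once $s\ge 2$ the $-1$ at $t=2$ cancels the $+1$ at $t=1$, and for $m\ge 2$ nothing changes. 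This is the first claim.

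For the second identity I would likewise compute, for each $r$, the coefficients in $\partb_{r,i}=x_{r,i}-x_{r,i+1}-x_{r+1,i-1}+x_{r+1,i}$ of the variables occurring in ${\bf w}$, and sum over $r=s,\dots,n+1-i$. The support of ${\bf w}$ lies on the anti-diagonal through $(1,i)$: the cells $(t,i+1-t)$ and $(t-1,i+1-t)$ with $t\ge 3$ all sit in columns $\le i-2$, which never appear in $\partb_{r,i}$ (whose columns are $i-1,i,i+1$), hence contribute $0$. The remaining cells $(1,i)$, $(1,i-1)$, and $(2,i-1)$ must be handled directly. The main obstacle is exactly this bookkeeping: one checks that $(1,i)$ contributes $+1$ to $\sumb_s$ only at $s=1$ and is present for every $m^\ast$, while $(2,i-1)$ (present once $m^\ast\ge 2$) contributes $-1$ at $s=1$, and $(1,i-1)$ contributes $0$. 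Carrying out this cancellation leaves a net change of $+1$ only when $m^\ast=1$ and $s=1$, and $0$ in all other cases, which is the second claim and completes the proof.
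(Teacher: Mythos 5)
Your proof is correct and takes essentially the same route as the paper: the paper states this lemma as a direct observation obtained "as similarly as Lemma \ref{lem:sumb eqn A}", i.e.\ exactly the per-entry coefficient bookkeeping you carry out, using $\widetilde{f}_i(b)=b+{\bf e}_{m,i}$, $\widetilde{f}_i^\ast(b)=b+\sum_{t=1}^{m^\ast}({\bf e}_{t,i+1-t}-{\bf e}_{t-1,i+1-t})$, and linearity of $\sumb_s$ and $\sumb_s^\ast$. Your identification of the only nonzero contributions (the cells $(1,i)$, $(1,i-1)$, $(2,i-1)$ and the cancellation for $s\ge 2$ or $m,m^\ast\ge 2$) matches the computation the paper leaves implicit.
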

\begin{prop} \label{prop:jump nonzero A}
    For $i \in I$ and $b \in \mathcal{B}(\infty)$, suppose that ${\rm jump}_i(b) \neq 0$. Then we have
    \[ \varepsilon_i^\ast(\widetilde{f}_ib) = \varepsilon_i^\ast(b) \quad\mbox{and}\quad\varepsilon_i(\widetilde{f}_i^\ast(b)) = \varepsilon_i(b). \]
    In particular, if ${\rm jump}_i(b) \geq 2$, then we have $\widetilde{f}_i\widetilde{f}_i^\ast(b) = \widetilde{f}_i^\ast\widetilde{f}_i(b)$. 
\end{prop}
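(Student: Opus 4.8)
The plan is to derive both assertions from Lemma~\ref{lem:fixed sumb A} together with the decomposition of ${\rm jump}_i(b)$ obtained in Proposition~\ref{prop:jump zero A}. Write $m = m_i(b)$ and $m^\ast = m_i^\ast(b)$. Recall that ${\rm jump}_i(b) = (\varepsilon_i(b) - \sumb_1(b)) + (\varepsilon_i^\ast(b) - \sumb_1^\ast(b))$ with both summands nonnegative, that $m > 1$ holds exactly when $\varepsilon_i(b) > \sumb_1(b)$, and dually that $m^\ast > 1$ holds exactly when $\varepsilon_i^\ast(b) > \sumb_1^\ast(b)$. Since ${\rm jump}_i(b) \neq 0$ rules out the case $m = m^\ast = 1$, I will always be able to play off one side against the other, and the whole proof amounts to tracking how the quantities $\sumb_s^\ast$ (resp. $\sumb_s$) move under $\widetilde{f}_i$ (resp. $\widetilde{f}_i^\ast$).

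First I would establish $\varepsilon_i^\ast(\widetilde{f}_i b) = \varepsilon_i^\ast(b)$; the identity $\varepsilon_i(\widetilde{f}_i^\ast b) = \varepsilon_i(b)$ then follows by the symmetric argument, since Lemma~\ref{lem:fixed sumb A} supplies both directions. If $m > 1$, then Lemma~\ref{lem:fixed sumb A} gives $\sumb_s^\ast(\widetilde{f}_i b) = \sumb_s^\ast(b)$ for every $s$, so the maximum over $s$ is unchanged. If instead $m = 1$, then ${\rm jump}_i(b) \neq 0$ forces $m^\ast > 1$, hence $\sumb_1^\ast(b) < \varepsilon_i^\ast(b)$; Lemma~\ref{lem:fixed sumb A} now only raises the $s=1$ term by $1$, giving $\sumb_1^\ast(\widetilde{f}_i b) = \sumb_1^\ast(b) + 1 \leq \varepsilon_i^\ast(b)$, while the value $\varepsilon_i^\ast(b)$ is still attained at $s = m^\ast$. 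In both cases $\varepsilon_i^\ast(\widetilde{f}_i b) = \varepsilon_i^\ast(b)$.

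For the commutation $\widetilde{f}_i \widetilde{f}_i^\ast(b) = \widetilde{f}_i^\ast \widetilde{f}_i(b)$ under ${\rm jump}_i(b) \geq 2$, I would reduce to an index-invariance statement. Each operator adds to its argument a fixed vector depending only on the selected index, so it suffices to show $m_i^\ast(\widetilde{f}_i b) = m^\ast$ and $m_i(\widetilde{f}_i^\ast b) = m$: granting these, both iterated images equal $b + \sum_{(s,t) \in \eta_m} {\bf v}(\mathcal{I}_{T_i}(s,t)) + \sum_{(s,t) \in \eta_{m^\ast}} {\bf v}(\mathcal{I}_{T_i^\ast}(s,t))$, and vector addition commutes. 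To see $m_i^\ast(\widetilde{f}_i b) = m^\ast$: if $m > 1$ the values $\sumb_s^\ast$ are untouched and the minimizing index is preserved verbatim; if $m = 1$ then $\varepsilon_i(b) = \sumb_1(b)$, so ${\rm jump}_i(b) \geq 2$ forces $\varepsilon_i^\ast(b) - \sumb_1^\ast(b) \geq 2$, whence $\sumb_1^\ast(\widetilde{f}_i b) = \sumb_1^\ast(b) + 1 < \varepsilon_i^\ast(b)$ and $s = 1$ still fails to attain the maximum, leaving the minimizing index at $m^\ast$. The statement $m_i(\widetilde{f}_i^\ast b) = m$ is symmetric.

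The main obstacle is precisely this last bookkeeping in the degenerate cases $m = 1$ or $m^\ast = 1$, where applying one operator nudges the relevant $s=1$ value upward by $1$: here the gap hypothesis is what keeps the maximizing index from jumping to $1$. This is also where the strength of the hypothesis makes itself felt, since a gap of $1$ suffices to preserve the maximal value (enough for the first assertion) but only a gap of at least $2$ preserves the minimizing index (needed for commutation), which is exactly why the commutation statement is asserted only when ${\rm jump}_i(b) \geq 2$.
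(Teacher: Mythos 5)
Your proposal is correct and follows essentially the same route as the paper's proof: both rest on Lemma \ref{lem:fixed sumb A} together with the decomposition ${\rm jump}_i(b) = (\varepsilon_i(b) - \sumb_{1}(b)) + (\varepsilon_i^\ast(b) - \sumb_{1}^\ast(b))$, with a case split on whether $m_i(b)$ or $m_i^\ast(b)$ equals $1$, and the commutation reduced to showing the selecting indices $m_i^\ast(\widetilde{f}_i b)$ and $m_i(\widetilde{f}_i^\ast b)$ are preserved. Your closing observation about why a gap of $1$ preserves the maximal value but only a gap of $\geq 2$ pins down the minimal maximizing index matches the paper's case analysis exactly.
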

\begin{proof}
    Put $\widetilde{f}_i(b) = b', \widetilde{f}_i^\ast(b) = b''$, and 
    suppose $m_i(b) = m$ for some $1 \leq m \leq n+1-i$ and $m_i^\ast(b) = {m^\ast}$ for some $1 \leq m^\ast \leq i$. 
    
    When $m \neq 1$, we deduce from Lemma \ref{lem:fixed sumb A} that
    \begin{eqnarray*}
        \sumb_{m^\ast}^\ast(b') = \sumb_{m^\ast}(b) &>& \sumb_{s}(b) = \sumb_{s}^\ast(b') \quad \mbox{for } s < m^\ast, \\
        \sumb_{m^\ast}^\ast(b') = \sumb_{m^\ast}(b) &\geq& \sumb_{s}(b) = \sumb_{s}^\ast(b') \quad \mbox{for } s > m^\ast,
    \end{eqnarray*}
    and we obtain
    \begin{equation} \label{eqn:mstar f}
        \varepsilon_i^\ast(b') = \varepsilon_i^\ast(b) \quad\mbox{and}\quad m_i^\ast(b') = m_i^\ast(b).
    \end{equation}
    On the other hand, when $m^\ast \neq 1$, we similarly deduce from Lemma \ref{lem:fixed sumb A} that
    \begin{eqnarray*}
        \sumb_{m}(b'') = \sumb_{m}(b) &>& \sumb_{s}(b) = \sumb_{s}(b'') \quad \mbox{for } s < m,  \\
        \sumb_{m}(b'') = \sumb_{m}(b) &\geq& \sumb_{s}(b) = \sumb_{s}(b'') \quad \mbox{for } s > m,
    \end{eqnarray*}
    and we obtain
    \begin{equation} \label{eqn:m fstar}
        \varepsilon_i(b'') = \varepsilon_i(b) \quad\mbox{and}\quad m_i(b'') = m_i(b).
    \end{equation}

    Now, we prove the statement in the following cases.
    First, we assume $m=1$ and $m^\ast \neq 1$.
    By \eqref{eqn:m fstar}, we have $\varepsilon_i(b'') = \varepsilon_i(b), m_i(b'') = m_i(b) (= 1)$. 
    In addition, since $m = 1$ and ${\rm jump}_i(b) \geq 1$, we have $\sumb_{m^\ast}^\ast(b) \geq \sumb_{1}^\ast(b) + 1$ and 
    \[ \sumb_{m^\ast}^\ast(b') = \sumb_{m^\ast}^\ast(b) \geq \sumb_{1}^\ast(b)+1 = \sumb_{1}^\ast(b'). \]
    On the other hand, we have 
    \begin{eqnarray*}
        && \sumb_{m^\ast}^\ast(b') = \sumb_{m^\ast}^\ast(b) > \sumb_{s}^\ast(b) = \sumb_{s}^\ast(b') \quad \mbox{for } 1 < s < m^\ast, \\
        &\mbox{and}& \sumb_{m^\ast}^\ast(b') = \sumb_{m^\ast}^\ast(b) \geq \sumb_{s}^\ast(b) = \sumb_{s}^\ast(b') \quad \mbox{for } s > m^\ast.
    \end{eqnarray*}
    Hence
    \[ \varepsilon_i^\ast(b') = \sumb_{m^\ast}^\ast(b') = \varepsilon_i^\ast(b). \]
    Moreover, if ${\rm jump}_i(b) \geq 2$, then $\sumb_{m^\ast}^\ast(b') > \sumb_{1}^\ast(b')$ and $m_i^\ast(b') = m^\ast = m_i^\ast(b)$ holds. 
    Thus, if ${\rm jump}_i(b) \geq 2$, then
    \[ \widetilde{f}_i\widetilde{f}_i^\ast(b) = \widetilde{f}_i^\ast \widetilde{f}_i (b)
        = b + \sum_{k=1}^{m^\ast} ({\bf e}_{k,i+1-k} - {\bf e}_{k-1,i+1-k}) + {\bf e}_{1,i}. \]
    
    Second, we assume $m \neq 1$ and $m^\ast = 1$. 
    By changing the role of usual crystal structure and star crystal structure with using \eqref{eqn:mstar f}, 
    we can prove the statement similarly as the first case. 
    Note that if ${\rm jump}_i(b) \geq 2$, we have
    \[ \widetilde{f}_i\widetilde{f}_i^\ast(b) = \widetilde{f}_i^\ast \widetilde{f}_i (b) = b + {\bf e}_{1,i} + {\bf e}_{m,i}. \]

    Finally, we assume $m \neq 1$ and $m^\ast \neq 1$.
    Then \eqref{eqn:mstar f} and \eqref{eqn:m fstar} directly yield our statement with
    \[ \widetilde{f}_i\widetilde{f}_i^\ast(b) = \widetilde{f}_i^\ast \widetilde{f}_i (b) = b + {\bf e}_{m,i} + {\bf e}_{m^\ast,i}. \]
    Note that the condition ${\rm jump}_i(b) \geq 2$ always holds in this case.
\end{proof}

\begin{lem} \label{lem:comm sumb A}
    Assume that $\mathfrak{g}$ is of type $A_n$ and take $b \in \mathcal{B}(\infty)$. 
    For distinct $i, j \in I$, let $m_i(b) = m$ for some $1 \leq m \leq n+1-i$ and $m_j^\ast(b) = {m^\ast}$ for some $1 \leq m^\ast \leq j$. 
    When $i > j$, we have
    \[ \sumb_{s}^\ast(\widetilde{f}_i(b)) = \sumb_{s}^\ast(b), \quad \sumb_{t}(\widetilde{f}_j^\ast(b)) = \sumb_{t}(b) \]
    for $1 \leq s \leq j$ and $1 \leq t \leq n+1-i$. 
    When $i < j$, the followings hold:
    \[ \sumb_{s}^\ast(\widetilde{f}_i(b)) = \begin{cases}
        \sumb_{s}^\ast(b) - 1 & \mbox{if $m=j-i$ and $s = m$,} \\
        \sumb_{s}^\ast(b) + 1 & \mbox{if $m=j-i+1$ and $s = m$,} \\
        \sumb_{s}^\ast(b) & \mbox{otherwise}
    \end{cases} \]
    for $1 \leq s \leq j$ and 
    \[ \sumb_{t}(\widetilde{f}_j^\ast(b)) = \begin{cases}
        \sumb_{t}(b) - 1 & \mbox{if $m^\ast = j-i$ and $t=m^\ast$,} \\
        \sumb_{t}(b) + 1 & \mbox{if $m^\ast = j-i+1$ and $t=m^\ast$,} \\
        \sumb_{t}(b) & \mbox{otherwise}
    \end{cases} \]
    for $1 \leq t \leq n+1-i$.
\end{lem}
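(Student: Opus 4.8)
The plan is to exploit the linearity of the functions $\sumb^\ast_s$ and $\sumb_t$ together with the explicit descriptions of $\widetilde f_i$ and $\widetilde f_j^\ast$ as coordinate shifts. Under the type-$A$ identification $m_i(b)=m=\eta_m$, a direct telescoping (cf. the note after Definition \ref{def:crystal usual}) collapses $\widetilde f_i$ to a single bump $\widetilde f_i(b)=b+{\bf e}_{m,i}$, while $\widetilde f_j^\ast(b)=b+\sum_{u=1}^{m^\ast}({\bf e}_{u,j+1-u}-{\bf e}_{u-1,j+1-u})$ when $m_j^\ast(b)=m^\ast=\eta_{m^\ast}$. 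Since each $\sumb^\ast_s$ and $\sumb_t$ is a linear function on $\mathbb{Q}^\infty$, the differences $\sumb^\ast_s(\widetilde f_i(b))-\sumb^\ast_s(b)$ and $\sumb_t(\widetilde f_j^\ast(b))-\sumb_t(b)$ equal the values of these linear functions on the corresponding (fixed, $b$-independent) shift vectors. Thus the entire lemma reduces to a finite coefficient bookkeeping.

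First I would treat $\sumb^\ast_s(\widetilde f_i(b))$. Here the shift is the single vector ${\bf e}_{m,i}$, so the difference is exactly the coefficient of $x_{m,i}$ in $\sumb^\ast_s=\sum_{u=1}^{s}\partb^\ast_{u,\,j+1-u}$. Reading the four monomials of $\partb^\ast_{u,t}=x_{u-1,t}-x_{u-1,t+1}-x_{u,t-1}+x_{u,t}$ with $t=j+1-u$, the variable $x_{m,i}$ can occur only when $m=j-i$ or $m=j+1-i$, and for each such value exactly the two summands $u=m$ and $u=m+1$ involve $x_{m,i}$ with opposite signs. The two contributions cancel once $s\ge m+1$ and a single one survives at $s=m$, yielding $-1$ when $m=j-i$ and $+1$ when $m=j+1-i$. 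When $i>j$ the critical values $j-i$ and $j+1-i$ are nonpositive, hence strictly below $1\le m$, so no monomial matches and the difference vanishes; this also forces the surviving index $s=m$ into the admissible range $1\le s\le j$.

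Next I would treat $\sumb_t(\widetilde f_j^\ast(b))$, the more delicate half, since the shift vector ${\bf w}:=\widetilde f_j^\ast(b)-b$ is spread along a diagonal rather than concentrated at one point. The efficient device is the box decomposition of Lemma \ref{lem:tab eqn A}: writing $\sumb_t=\sum_{v=t}^{n+1-i}\partb_{v,i}=\sum_{v=t}^{n+1-i}\big(\boxed{i}^A_v-\boxed{i+1}^A_v\big)$, it suffices to evaluate the boxes on ${\bf w}$. A short computation, using the convention ${\bf e}_{0,\cdot}=0$, shows that $\boxed{k}^A_v({\bf w})$ vanishes unless $k=j+1-m^\ast$, in which case it is $+1$ at $v=m^\ast$, $-1$ at $v=m^\ast-1$, and $0$ elsewhere. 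Feeding this back, only the $\boxed{i}^A_v$-part survives when $m^\ast=j+1-i$ and only the $\boxed{i+1}^A_v$-part survives when $m^\ast=j-i$; summing the surviving $\pm1$'s over $t\le v\le n+1-i$ collapses by an Iverson-bracket telescoping to $+[t=m^\ast]$ and $-[t=m^\ast]$ respectively, and to $0$ for all other $m^\ast$, which is precisely the asserted table. Again $i>j$ makes $j+1-m^\ast=i$ and $j-m^\ast=i$ unsolvable for $1\le m^\ast\le j$, so every box evaluates to $0$ and $\sumb_t$ is unchanged.

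The main obstacle is this second computation: controlling the interplay between the diagonal spread of ${\bf w}$ and the columnwise sum defining $\sumb_t$, while tracking all boundary cases (the convention ${\bf e}_{0,\cdot}=0$, membership in $\mathcal{I}_n^A$, and the endpoints of the ranges $t\le v\le n+1-i$ and $1\le u\le m^\ast$). The box reformulation via Lemma \ref{lem:tab eqn A} is what defuses it, collapsing an a priori double sum to at most two surviving terms; the final, routine but essential check is that the surviving index always lands inside the admissible range and that nothing leaks outside $\mathcal{I}_n^A$.
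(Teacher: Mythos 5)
Your proof is correct. The paper in fact states this lemma without proof, treating it (like Lemma \ref{lem:sumb eqn A} and Lemma \ref{lem:fixed sumb A}) as a direct observation, and your argument supplies exactly that computation: by linearity, the changes in $\sumb_s^\ast$ and $\sumb_t$ equal their values on the fixed shift vectors ${\bf e}_{m,i}$ and $\widetilde{f}_j^\ast(b)-b$, and your coefficient bookkeeping---including the degenerate cases $i>j$, the surviving index landing in the admissible range, and the edge conventions of $\mathcal{I}_n^A$---checks out. Your only departure from the paper's implicit route is funneling the second computation through the box functions of Lemma \ref{lem:tab eqn A} (a tool the paper itself deploys only in the well-definedness argument, Proposition \ref{prop:bicrystal def A}), so that $\boxed{k}_v^A$ evaluated on the diagonal shift is supported at $k=j+1-m^\ast$ with values $+1$ at $v=m^\ast$ and $-1$ at $v=m^\ast-1$; this replaces a term-by-term inspection of the $\partb_{v,i}$'s by a clean two-term telescoping, but the underlying content is the same finite linear algebra.
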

As we can observe in the above lemma, for fixed $m$, $\sumb_{s}^\ast(\widetilde{f}_i(b))$ and $\sumb_{s}^\ast(b)$ are different for at most one $s$, 
and, for fixed $m^\ast$, $\sumb_{t}(\widetilde{f}_j^\ast(b))$ and $\sumb_{t}(b)$ are different for at most one $t$, 
and hence most inequalities among $\sumb_{s}^\ast$ for $1 \leq s \leq n+1-j$ (resp. $\sumb_{t}$ with $1 \leq t \leq i$) are identical 
when evaluating $\sumb_{s}^\ast$ at $b$ and $\widetilde{f}_i(b)$ (resp. evaluating $\sumb_t$ at $b$ and $\widetilde{f}_j^\ast(b)$).
\begin{prop} \label{prop:comm A}
    For distinct $i, j \in I$ and $b \in \mathcal{B}(\infty)$, we have $\widetilde{f}_i \widetilde{f}_j^\ast(b) = \widetilde{f}_j^\ast \widetilde{f}_i(b)$. 
\end{prop}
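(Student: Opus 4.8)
The plan is to reduce the identity $\widetilde{f}_i\widetilde{f}_j^\ast(b) = \widetilde{f}_j^\ast\widetilde{f}_i(b)$ to the invariance of two optimizing indices. In type $A_n$, Definition \ref{def:crystal usual} together with the remark following it gives $\widetilde{f}_i(b) = b + {\bf e}_{m_i(b),i}$, while Definition \ref{def:crystal star} gives $\widetilde{f}_j^\ast(b) = b + \sum_{t=1}^{m_j^\ast(b)}\left({\bf e}_{t,j+1-t} - {\bf e}_{t-1,j+1-t}\right)$, a vector depending only on $m_j^\ast(b)$. Expanding both composites,
\[ \widetilde{f}_i\widetilde{f}_j^\ast(b) = b + \sum_{t=1}^{m_j^\ast(b)}\left({\bf e}_{t,j+1-t} - {\bf e}_{t-1,j+1-t}\right) + {\bf e}_{m_i(\widetilde{f}_j^\ast(b)),\,i}, \]
\[ \widetilde{f}_j^\ast\widetilde{f}_i(b) = b + {\bf e}_{m_i(b),i} + \sum_{t=1}^{m_j^\ast(\widetilde{f}_i(b))}\left({\bf e}_{t,j+1-t} - {\bf e}_{t-1,j+1-t}\right), \]
so the two sides coincide as soon as $m_i(\widetilde{f}_j^\ast(b)) = m_i(b)$ and $m_j^\ast(\widetilde{f}_i(b)) = m_j^\ast(b)$. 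Hence it suffices to show that applying one operator fixes the minimal maximizer governing the other.

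First I would treat the case $i > j$. Here Lemma \ref{lem:comm sumb A} asserts $\sumb_s^\ast(\widetilde{f}_i(b)) = \sumb_s^\ast(b)$ for all $1 \leq s \leq j$ and $\sumb_t(\widetilde{f}_j^\ast(b)) = \sumb_t(b)$ for all $1 \leq t \leq n+1-i$. Since these ranges exhaust $\Pi_j^\ast$ and $\Pi_i$ respectively, the entire families of values defining $\sigma_j^\ast$ and $\sigma_i$ are left unchanged, and therefore so are their minimal maximizers $m_j^\ast$ and $m_i$. The required invariances hold verbatim, and the two composites agree; this case needs no further analysis.

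The substantive case is $i < j$. Now Lemma \ref{lem:comm sumb A} localizes the effect of $\widetilde{f}_i$ on the star values to the single index $s = m := m_i(b)$, the change being $-1$ precisely when $m = j-i$, $+1$ precisely when $m = j-i+1$, and $0$ otherwise; symmetrically, $\widetilde{f}_j^\ast$ perturbs the usual values only at $t = m^\ast := m_j^\ast(b)$, again by $\pm 1$ and only when $m^\ast \in \{j-i,\,j-i+1\}$. When $m \notin \{j-i,\,j-i+1\}$ (resp. $m^\ast \notin \{j-i,\,j-i+1\}$) no value changes and the argument of the previous case applies. In the remaining sub-cases a single coordinate is shifted by $\pm 1$, and I would argue that such a shift cannot move the minimal maximizer: a $-1$ shift must land on a coordinate lying strictly below the current maximum (so it stays submaximal), while a $+1$ shift must land on a coordinate already attaining the maximum (so, by minimality of the chosen index, it cannot produce a strictly smaller maximizer). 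Establishing these sign-versus-maximum compatibilities pins down $m_i(\widetilde{f}_j^\ast(b)) = m_i(b)$ and $m_j^\ast(\widetilde{f}_i(b)) = m_j^\ast(b)$.

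I expect the crux to be exactly these compatibility statements in the case $i < j$: one must exclude the possibility that the lone perturbed coordinate is the unique maximizer being pushed down (which would lower $\sigma_j^\ast$ or $\sigma_i$) or a strictly submaximal coordinate being pushed up to a new, smaller-indexed maximum. The way I would settle this is to feed the relations of Lemma \ref{lem:tab eqn A} and the explicit description of $\Xi^{(j)}$ into the comparison, forcing the perturbation index $j-i$ or $j-i+1$ to be consistent with the location of $m^\ast$ (resp. $m$). Once this consistency is in place, both optimizing indices are stable and the commutation is immediate from the two displays above.
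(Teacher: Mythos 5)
Your overall strategy mirrors the paper's (case split via Lemma \ref{lem:comm sumb A}, then tracking the minimal maximizers), but your proposed reduction is too strong and fails in exactly one case, which is the heart of the paper's proof. The claim ``it suffices to show that applying one operator fixes the minimal maximizer governing the other'' is not true: for $i<j$, in the case $m := m_i(b) = m_j^\ast(b) =: m^\ast = j-i$, the minimal maximizers are \emph{not} preserved. Indeed, minimality of $m_i(b)$ gives $\partb_{m,i}(b) \geq 0$, while minimality of $m_j^\ast(b)$ gives $\partb_{m+1,i}^\ast(b) \leq 0$; since $\partb_{m+1,i}^\ast = \partb_{m,i}$ in type $A_n$, this forces $\partb_{m,i}(b)=0$, hence the ties $\sumb_{(m)}(b) = \sumb_{(m+1)}(b) = \sigma_i(b)$ and $\sumb_{(m^\ast)}^\ast(b) = \sumb_{(m^\ast+1)}^\ast(b) = \sigma_j^\ast(b)$. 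Applying $\widetilde{f}_i$ then drops $\sumb_{(m)}^\ast$ by $1$ (this contradicts your assertion that a $-1$ shift always lands strictly below the maximum: here it lands on the minimal maximizer itself), so $m_j^\ast(\widetilde{f}_i(b)) = m^\ast+1$; symmetrically $m_i(\widetilde{f}_j^\ast(b)) = m+1$. So both indices move, and your sufficient condition is simply false in this configuration, which genuinely occurs.

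The paper saves this case not by invariance but by a direct computation showing that the shifted indices still produce the same element: with $b' = \widetilde{f}_i(b)$, $b'' = \widetilde{f}_j^\ast(b)$,
\[
\widetilde{f}_j^\ast(b') = b' + \sum_{s=1}^{m^\ast+1} {\bf v}(s,j+1-s)
= b + \sum_{s=1}^{m^\ast} {\bf v}(s,j+1-s) + \bigl({\bf e}_{m,i} + {\bf v}(m+1,i)\bigr)
= b'' + {\bf e}_{m+1,i} = \widetilde{f}_i(b''),
\]
using $j+1-(m^\ast+1) = i$ and ${\bf v}(m+1,i) = {\bf e}_{m+1,i}-{\bf e}_{m,i}$. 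Your treatment of the remaining subcases ($i>j$; $m \notin \{j-i,\,j-i+1\}$; and $m = j-i+1$, where one can show $m^\ast \neq m$ and both maximizers are indeed stable) is consistent with the paper, but without the tie case above the argument is incomplete, and no amount of ``consistency'' between the perturbation index and the location of $m^\ast$ will rescue the invariance statement---it has to be replaced by the compensating computation.
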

\begin{proof}
    Put $\widetilde{f}_i(b) = b', \widetilde{f}_j^\ast(b) = b''$, and suppose $m_i(b) = m$ for some $1 \leq m \leq n+1-i$ 
    and $m_j^\ast(b) = {m^\ast}$ for some $1 \leq m^\ast \leq j$. 
    When $i > j$, we directly check $m_j^\ast(b') = m_j^\ast(b), m_i(b'') = m_i(b)$ by Lemma \ref{lem:comm sumb A}.
    Thus, we have $\widetilde{f}_j^\ast(b') = \widetilde{f}_i(b'')$. 

    Now assume $i < j$. As similarly as above, if $m \neq j-i$ and $m \neq j-i+1$, then we directly obtain 
    $m_j^\ast(b') = m_j^\ast(b)$ and $m_i(b'') = m_i(b)$, which implies $\widetilde{f}_j^\ast(b') = \widetilde{f}_i(b'')$. 

    Suppose $m = j-i+1$ and we have $\partb_{m-1, i}(b) = \partb_{j-i+1,i}^\ast(b) < 0$ by definition of $m$.
    Then we have $\sumb_{m}^\ast(b) < \sumb_{m-1}^\ast(b)$ and so $m^\ast \neq m$. 
    Recall $\sumb_{m}^\ast(b') = \sumb_{m}^\ast(b) + 1$ and $\sumb_{s}^\ast(b') = \sumb_{s}^\ast(b)$ for all $s \neq m$, 
    and we obtain $\sumb_{m}^\ast(b') \leq \sumb_{m-1}^\ast(b')$. 
    
    When $m^\ast > m$, we have
    \[ \sumb_{m^\ast}^\ast(b') = \sumb_{m^\ast}^\ast(b) > \sumb_{m-1}^\ast(b) = \sumb_{m-1}^\ast(b') \geq \sumb_{m}^\ast(b'), \]
    which implies $m_j^\ast(b') = m_j^\ast(b)$. 
    In addition, we have $\sumb_{t}(b'') = \sumb_{t}(b)$ for all $1 \leq t \leq n+1-i$ 
    and we have $m_i(b'') = m_i(b)$, and hence $\widetilde{f}_j^\ast(b') = \widetilde{f}_i(b'')$. 

    When $m^\ast < m$, if $\sumb_{m^\ast}^\ast(b) = \sumb_{m}^\ast(b)$, then $\sumb_{m^\ast}^\ast(b) = \sumb_{m}^\ast(b) < \sumb_{m-1}^\ast(b)$, 
    which is a contradiction to the choice of $m^\ast$. 
    It means that $\sumb_{m^\ast}^\ast(b) > \sumb_{m}^\ast(b)$ and so $\sumb_{m^\ast}^\ast(b') \geq \sumb_{m}^\ast(b')$, which implies $m_j^\ast(b') = m_j^\ast(b)$. 
    In addition, we have $\sumb_{t}(b'') = \sumb_{t}(b)$ for all $1 \leq t \leq n+1-i$ unless $m^\ast = m-1 (= j-i)$ and $t = m^\ast$. 
    If $m^\ast = j-i$, then
    \[ \sumb_{m}(b'') = \sumb_{m}(b) > \sumb_{m^\ast}(b) > \sumb_{m^\ast}(b''). \]
    In both cases, we have $m_i(b'') = m_i(b)$. 
    Thus, we always obtain $m_j^\ast(b') = m_j^\ast(b)$ and $m_i(b'') = m_i(b)$, which implies $\widetilde{f}_j^\ast(b') = \widetilde{f}_i(b'')$.

    On the other hand, suppose $m = j-i$ and we have $\partb_{m,i}(b) = \partb_{m+1,i}^\ast(b) \geq 0$ by definition of $m$. 
    When $m^\ast > m$ (resp. $m^\ast < m$), we have
    \begin{eqnarray*}
        && \sumb_{m^\ast}^\ast(b') = \sumb_{m^\ast}^\ast(b) > \sumb_{m}^\ast(b) > \sumb_{m}^\ast(b') \\
        & \mbox{(resp. } & \sumb_{m^\ast}^\ast(b') = \sumb_{m^\ast}^\ast(b) \geq \sumb_{m}^\ast(b) > \sumb_{m}^\ast(b') ),
    \end{eqnarray*}
    which implies $m_j^\ast(b') = m_j^\ast(b)$. 
    In addition, we have $\sumb_{t}(b'') = \sumb_{t}(b)$ for all $1 \leq t \leq n+1-i$ unless $m^\ast = m+1 (= j-i+1)$ and $t = m^\ast$. 
    If $m^\ast = m+1$, then we have $\partb_{m+1, j+1-(m+1)}^\ast(b) = \partb_{m,i}(b) > 0$ by definition of $m^\ast$ and $\sumb_{m}(b) > \sumb_{m+1}(b)$. 
    Then we have
    \[ \sumb_{m}(b'') = \sumb_{m}(b) \geq \sumb_{m^\ast}(b) + 1 = \sumb_{m^\ast}(b''). \]
    In both cases, we have $m_i(b'') = m_i(b)$. 
    Thus, we obtain $m_j^\ast(b') = m_j^\ast(b)$ and $m_i(b'') = m_i(b)$, which implies $\widetilde{f}_j^\ast(b') = \widetilde{f}_i(b'')$.

    When $m^\ast = m$, we know $\partb_{m+1, i}^\ast(b) \leq 0$ by definition of $m^\ast$. 
    Then $\partb_{m+1, i}^\ast(b) = \partb_{m,i}(b) = 0$, which implies $\sumb_{m}(b) = \sumb_{m+1}(b)$ and $\sumb_{m^\ast}^\ast(b) = \sumb_{m^\ast+1}^\ast(b)$. 
    By Lemma \ref{lem:fixed sumb A}, we get the following inequalities;
    \begin{eqnarray*}
        \sumb_{m^\ast+1}^\ast(b') = \sumb_{m}^\ast(b) &>& \sumb_{m}^\ast(b'), \\
        \sumb_{m^\ast+1}^\ast(b') = \sumb_{m}^\ast(b) &>& \sumb_{s}^\ast(b') \quad \mbox{for } s < m^\ast, \\
        \sumb_{m^\ast+1}^\ast(b') = \sumb_{m}^\ast(b) &\geq& \sumb_{s}^\ast(b') \quad \mbox{for }s > m^\ast,
    \end{eqnarray*}
    which implies $m_i^\ast(b') = {m^\ast+1}$. 
    Similarly, we get other inequalities;
    \begin{eqnarray*}
        \sumb_{m+1}(b'') = \sumb_{m}(b) &>& \sumb_{m}(b''), \\
        \sumb_{m+1}(b'') = \sumb_{m}(b) &>& \sumb_{t}(b'') \quad \mbox{for } t < m, \\
        \sumb_{m+1}(b'') = \sumb_{m}(b) &\geq& \sumb_{t}(b'') \quad \mbox{for }t > m,
    \end{eqnarray*}
    which implies $m_i(b'') = {m+1}$. 
    By the direct calculation, we obtain
    \begin{eqnarray*}
        \widetilde{f}_j^\ast(b') &=& b' + \sum_{s=1}^{m^\ast+1} {\bf v}_{s,j+1-s} \\
            &=& \left( b + {\bf e}_{m,i} \right) + \sum_{s=1}^{m^\ast} {\bf v}_{s,j+1-s} + {\bf v}_{m+1, j-m} \\
            &=& \left( b + \sum_{s=1}^{m^\ast} {\bf v}_{s,j+1-s} \right) + ({\bf e}_{m,i} + {\bf v}_{m+1, i}) \\
            &=& b'' + {\bf e}_{m+1, i} \ =\ \widetilde{f}_i(b'').
    \end{eqnarray*}
    Therefore, we get $\widetilde{f}_i^\ast(b') = \widetilde{f}_i(b'')$, which completes the proof.
\end{proof}

As a closing remark, we briefly explain the proof for type $BD$ cases and there are $3$ nontrivial statements. 
Note that we can easily prove Proposition \ref{prop:jump zero A} for type $BD$ as similarly as for type $A$. 
The first one is Proposition \ref{prop:bicrystal def A} which shows the well-definedness of crystal operators. 
The second one is Proposition \ref{prop:bicrystal inverse A} which explains that $\widetilde{e}_i^\ast$ and $\widetilde{f}_i^\ast$ are inverse to each other. 
The last one is Proposition \ref{prop:jump nonzero A} and Proposition \ref{prop:comm A} 
which are arguments on the evaluation of linear functions at $b$ and $\widetilde{f}_i(b)$ and $\widetilde{f}_i^\ast(b)$. 

The key idea to prove those statements is to find positive values from the definition of $m_i(b)$ or $m_i^\ast(b)$. 
In the proof, we have to evaluate linear functions $\sumb_\lambda$ ($\lambda \in \Pi_i$) and $\sumb_\mu^\ast$ ($\mu \in \Pi_i^\ast$) at $b, \widetilde{f}_i(b)$, and $\widetilde{f}_i^\ast(b)$, 
and we have to show that their valuations are nonnegative (sometimes positive); for example, $\boxed{j}_s^A(b')$ or $\sumb_\mu^\ast(b') - \sumb_\lambda^\ast(b')$ for $b' = \widetilde{f}_i(b)$ or $\widetilde{f}_i^\ast(b)$. 
We can similarly apply our arguments to the proof for type $BD$ if the value at $b'$ is still nonnegative. 
However, this nonnegativity seems to fail for special choices of $m_i(b)$ or $m_i^\ast(b)$ when the value we consider decreases by $-1$ after substituting $b'$. 
Fortunately, these specific assumptions on $m_i(b)$ or $m_i^\ast(b)$ ensure that the value at $b$ becomes (strictly) positive, 
which shows that the value is still nonnegative after substituting $b'$. 

Compared to type $A$, we have to consider additional special cases for type $BD$ causing mentioned obstructions. 
We verify that Theorem \ref{thm:bicrystal} for type $BD$ still holds with resolving all special cases. 
However, it is laborious and so we only remain lemmas necessary to prove Theorem \ref{thm:bicrystal} for type $BD$ in Appendix without proof.

\appendix
\section{Lemmas for Theorem \ref{thm:bicrystal} for type $BD$}
\begin{defn}[{\cite[Definition 3.3]{KaNa20}}] \hfill

    (1) Suppose that $\mathfrak{g}$ is of type $D_n$. For $s \in \mathbb{Z}_+$, we set
    \begin{eqnarray*}
        \boxed{j}_s^D &=& \begin{cases}
            x_{s,j} - x_{s+1, j-1} & \mbox{if } 1 \leq j \leq n-2 \mbox{ or } j = n, \\
            x_{s, n-1} + x_{s, n} - x_{s+1, n-2} & \mbox{if } j = n-1,
        \end{cases} \\
        \boxed{\overline{j}}_s^D &=& \begin{cases}
            x_{s+n-j,j-1} - x_{s+n-j, j} & \mbox{if } 1 \leq j \leq n-2, \\
            x_{s+1, n-2} - x_{s+1, n-1} - x_{s+1, n} & \mbox{if } j = n-1, \\
            x_{s, n-1} - x_{s+1, n} & \mbox{if } j = n, \\
            x_{s, n} & \mbox{if } j = n+1,
        \end{cases}
    \end{eqnarray*}
    where $x_{m, 0} = 0$ for $m \in \mathbb{Z}$.
    
    (2) Suppose that $\mathfrak{g}$ is of type $B_n$. For $1 \leq j \leq n$ and $s \in \mathbb{Z}_+$, we set
    \begin{eqnarray*}
        \boxed{j}_s^B &=& x_{s, j} - x_{s+1, j-1} \\
        \boxed{\overline{j}}_s^B &=& x_{s+n-j+1, j-1} - x_{s+n-j+1, j},
    \end{eqnarray*}
    where $x_{m, 0} = 0$ for $m \in \mathbb{Z}$.
\end{defn}
In \cite[Corollary 3.9]{KaNa20}, the set $\Xi$ is combinatorially described as follows.
\begin{eqnarray*} 
    && \mathcal{B}(\infty) = \{\, {\bf x} \in \mathbb{Z}_+^\infty \ |\ \boxed{j}_s^D({\bf x}) \geq 0 \mbox{ for all } s \geq 1, j \in \{ 1, \dots, n, \overline{n+1}, \overline{n}, \dots, \overline{1} \} \,\} \\
    &\mbox{and} & \mathcal{B}(\infty) = \{\, {\bf x} \in \mathbb{Z}_+^\infty \ |\ \boxed{j}_s^B({\bf x}) \geq 0 \mbox{ for all } s \geq 1, j \in \{ 1, \dots, n, \overline{n}, \dots, \overline{1} \} \,\}
\end{eqnarray*}

\begin{lem}[{\cite[Lemma 3.4]{KaNa20} (cf. Lemma \ref{lem:tab eqn A} for type $A$)}] \label{lem:tab eqn BD}\hfill

    (1) When $\mathfrak{g}$ is of type $D_n$, we have
    \begin{eqnarray*}
        \boxed{j+1}_s^D &=& \boxed{j}_s^D - \partb_{s, j} \quad (1 \leq j \leq n-1,\ s \geq 1), \\
        \boxed{\overline{n}}_s^D &=& \boxed{n-1}_s^D - \partb_{s, n} \quad (s \geq 1), \\
        \boxed{\overline{n-1}}_s^D &=& \boxed{n}_s^D - \partb_{s, n} \quad (s \geq 1), \\
        \boxed{\overline{j-1}}_s^D &=& \boxed{\overline{j}}_s^D - \partb_{s+n-j, j-1} \quad (2 \leq j \leq n,\ s \geq 1+j-n), \\
        \boxed{\overline{n+1}}_{s+2}^D + \boxed{\overline{n}}_{s+1}^D + \boxed{\overline{n-1}}_s^D &=& \boxed{\overline{n+1}}_s^D - \partb_{s,n} \quad (s \geq 1).
    \end{eqnarray*}

    (2) When $\mathfrak{g}$ is of type $B_n$, we have
    \begin{eqnarray*}
        \boxed{j+1}_s^B &=& \boxed{j}_s^B - \partb_{s,j} \quad (1 \leq j \leq n-1,\ s \geq 1), \\
        \boxed{\overline{n}}_s^B &=& \boxed{n}_s^B - \partb_{s,n} \quad (s \geq 1), \\
        \boxed{\overline{j-1}}_s^B &=& \boxed{\overline{j}}_s^B - \partb_{s+n-j+1, j-1} \quad (2 \leq j \leq n,\ s \geq j-n).
    \end{eqnarray*}
\end{lem}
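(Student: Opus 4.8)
The plan is to establish each identity by direct expansion, exploiting the fact that every stated relation is an equality of \emph{linear} functions on $\mathbb{Q}^\infty$. Thus it suffices to substitute the piecewise definitions of the box functions $\boxed{j}_s^D,\boxed{\overline{j}}_s^D$ (respectively their type-$B$ analogues) together with the explicit formulas for $\partb_{s,t}$ recorded in Section~\ref{sec:bicrystal}, and then compare the coefficient of each coordinate $x_{u,v}$ on the two sides. Since no inequalities or positivity enter, the whole argument reduces to a finite bookkeeping of cancellations, organized according to the position of $j$ relative to the branch (spin, resp. short) node. Note that this is a restatement of \cite[Lemma 3.4]{KaNa20}, so the verification below simply reproduces that computation from the definitions available in the excerpt.

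For type $D_n$ I would group the families as follows. In the first family $\boxed{j+1}_s^D=\boxed{j}_s^D-\partb_{s,j}$ the interior range $1\le j\le n-3$ is uniform: both box functions have the shape $x_{s,j}-x_{s+1,j-1}$, and subtracting $\partb_{s,j}=x_{s,j}-x_{s,j+1}-x_{s+1,j-1}+x_{s+1,j}$ telescopes the $x_{s,j}$ and $x_{s+1,j-1}$ terms, leaving precisely $x_{s,j+1}-x_{s+1,j}$. The values $j=n-2,n-1$ are treated separately, since $\boxed{n-1}_s^D$ carries the extra summand $x_{s,n}$ and $\partb_{s,n-2},\partb_{s,n-1}$ have the modified shapes forced by $\langle h_{n-2},\alpha_{n-1}\rangle=\langle h_{n-2},\alpha_n\rangle=-1$; one checks the additional terms cancel. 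The two spin identities $\boxed{\overline{n}}_s^D=\boxed{n-1}_s^D-\partb_{s,n}$ and $\boxed{\overline{n-1}}_s^D=\boxed{n}_s^D-\partb_{s,n}$ are symmetric under swapping the roles of nodes $n-1$ and $n$, so a single computation delivers both. The general bar-telescoping $\boxed{\overline{j-1}}_s^D=\boxed{\overline{j}}_s^D-\partb_{s+n-j,j-1}$ proceeds as in the first family after the argument shift $s\mapsto s+n-j$, again with separate attention at $j=n-1,n$ where the definitions of $\boxed{\overline{n-1}}_s^D$ and $\boxed{\overline{n}}_s^D$ deviate from the generic pattern.

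The genuinely distinct relation is the three-term folding identity: here one expands $\boxed{\overline{n+1}}_{s+2}^D=x_{s+2,n}$, $\boxed{\overline{n}}_{s+1}^D=x_{s+1,n-1}-x_{s+2,n}$, and $\boxed{\overline{n-1}}_s^D=x_{s+1,n-2}-x_{s+1,n-1}-x_{s+1,n}$, observes that the $x_{s+2,n}$ and $x_{s+1,n-1}$ terms cancel in pairs, and confirms that the surviving $x_{s+1,n-2}-x_{s+1,n}$ matches $\boxed{\overline{n+1}}_s^D-\partb_{s,n}$. The type-$B_n$ statements are strictly easier: there is a single short node, so the only deviation from uniform telescoping occurs at $j=n$, where $\partb_{s,n}=x_{s,n}-2x_{s+1,n-1}+x_{s+1,n}$ carries the doubled coefficient $2=-\langle h_n,\alpha_{n-1}\rangle$; substituting this into $\boxed{n}_s^B-\partb_{s,n}$ reproduces $\boxed{\overline{n}}_s^B$, and the remaining two families telescope exactly as in type $A$ after the shift $s\mapsto s+n-j+1$. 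The main obstacle is clerical rather than conceptual: keeping the index shifts in the bar functions ($s+n-j$ for $D_n$, $s+n-j+1$ for $B_n$) and the boundary conventions $x_{m,0}=x_{m,n+1}=0$ consistent across all boundary cases, and in particular aligning the three shifted arguments $s,s+1,s+2$ in the folding identity so that the intermediate terms cancel.
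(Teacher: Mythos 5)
Your overall strategy is correct and is, in substance, the only reasonable one: every identity in the lemma is an equality of \emph{linear} functionals on $\mathbb{Q}^\infty$, so substituting the definitions of the box functions and of $\partb_{s,t}$ and comparing coefficients of each $x_{u,v}$ settles everything. Note that the paper itself offers no proof of this statement --- it is quoted from \cite[Lemma 3.4]{KaNa20} and the appendix is explicitly left without proofs --- so your verification simply reconstructs the computation behind the citation, and your explicit expansions (the interior telescoping, the boundary cases at $j=n-2,n-1$, the folding identity, and the doubled coefficient $2=-\langle h_n,\alpha_{n-1}\rangle$ at the short node of $B_n$) all check out.

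One claim, however, is wrong as stated, although it does not sink the proof. The involution $\sigma$ swapping nodes $n-1$ and $n$ (i.e.\ $x_{s,n-1}\leftrightarrow x_{s,n}$ for all $s$) does \emph{not} interchange the two spin identities with each other: it sends $\boxed{\overline{n}}_s^D=x_{s,n-1}-x_{s+1,n}$ to $\boxed{n}_s^D$, fixes $\boxed{n-1}_s^D$ and $\boxed{\overline{n-1}}_s^D$, and sends $\partb_{s,n}$ to $\partb_{s,n-1}$. Hence $\sigma$ maps $\boxed{\overline{n}}_s^D=\boxed{n-1}_s^D-\partb_{s,n}$ to $\boxed{n}_s^D=\boxed{n-1}_s^D-\partb_{s,n-1}$, which is the $j=n-1$ case of your first family, and maps $\boxed{\overline{n-1}}_s^D=\boxed{n}_s^D-\partb_{s,n}$ to $\boxed{\overline{n-1}}_s^D=\boxed{\overline{n}}_s^D-\partb_{s,n-1}$, which is the $j=n$ case of your fourth family; it does not carry one spin identity to the other, so ``a single computation delivers both'' fails. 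The repair is immediate: either check each spin identity by its own one-line cancellation, e.g.
\[ \boxed{n}_s^D-\partb_{s,n}=(x_{s,n}-x_{s+1,n-1})-(x_{s,n}-x_{s+1,n-2}+x_{s+1,n})=x_{s+1,n-2}-x_{s+1,n-1}-x_{s+1,n}=\boxed{\overline{n-1}}_s^D, \]
or use $\sigma$ to deduce the two spin identities from the family-one instance $j=n-1$ and the family-four instance $j=n$, both of which your telescoping argument already covers.
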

Using Lemma \ref{lem:tab eqn BD}, we can prove Proposition \ref{prop:bicrystal def A} for type $BD$. \vspace{1em}

For $\lambda \in \Pi_i^\ast$, a cell $(s, t)$ is called a strict-addable (resp. a strict-removable) node of $\lambda$ 
if $(s, t) \not \in \lambda$ (resp. $(s, t) \in \lambda$) and $\lambda \cup \{ (s, t) \}$ (resp. $\lambda \backslash \{ (s, t) \}$) is a strict partition. 
For given $\lambda \in \Pi_i^\ast$, and we enumerate the strict-removable nodes of $\mu$ as $R_p = (s_p, t_p)$ with $s_1 < \dots < s_r$, and 
the strict-addable nodes of $\mu$ as $A_q = (u_q, v_q)$ with $u_1 < \dots < u_a$. 
Note that we only count strict-addable nodes contained in the shape of $T_i^\ast$. 
Let
\[ R_p' = (s_p, t_{p+1}+(s_{p+1}-s_p)), \quad A_q' = (u_{q+1}, v_q + (u_q - u_{q+1})) \]
for $1 \leq p < r$ and $1 \leq q < a$ (if exists). 

We obtain the following lemma on the difference between $\sumb_\lambda^\ast(b)$ and $\sumb_\lambda^\ast(\widetilde{f}_i^\ast(b))$. 
\begin{lem}[cf. {Lemma \ref{lem:sumb eqn A}} for type $A$] \label{lem:sumb eqn BD}
    Suppose that $\mathfrak{g}$ is of type $D_n$ or type $B_n$. 
    For $i \in I$, let $m_i^\ast(b) = \mu \in \Pi_i^\ast$. 
    For $\lambda \in \Pi_i^\ast$, denote by $\rho$, $\alpha$, $\rho'$, $\alpha'$ 
    the number of nodes $R_i$, $A_i$, $R_i'$, $A_i'$, respectively.

    (1) When $\mathfrak{g}$ is of type $D_n$, we have
    \[ \sumb_\lambda^\ast(\widetilde{f}_i^\ast(b)) = \sumb_\lambda^\ast(b) + 2 - \rho - \alpha + \rho' + \alpha'. \]

    (2) When $\mathfrak{g}$ is of type $B_n$, we have
    \[
        \sumb_\lambda^\ast(\widetilde{f}_i^\ast(b)) \ =\ \begin{cases}
            \sumb_\lambda^\ast(b) + 2 - \rho - \alpha + \rho' + \alpha' & \mbox{if } i \neq n, \\
            \sumb_\lambda^\ast(b) + 2 - 2\rho - 2\alpha + 2\rho' + 2\alpha' \\
            \hspace{0.5cm} + \delta((\ell, 1) \in \lambda) + \delta((\ell+1, 1) \in \lambda) \phantom{a} & \mbox{if } i = n,
        \end{cases}
    \]
    where $\delta(P) = 1$ if the statement $P$ is true and $\delta(P) = 0$ otherwise.
    
    In both cases, we have $\sumb_\mu^\ast(\widetilde{f}_i^\ast(b)) = \sumb_\mu^\ast(b) + 1$ for $\mu = m_i^\ast(b)$.
\end{lem}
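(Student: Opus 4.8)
The plan is to exploit the fact that each $\sumb_\lambda^\ast$ is a linear function on $\mathbb{Q}^\infty$ with no constant term. By Definition \ref{def:crystal star} we have $\widetilde{f}_i^\ast(b) = b + {\bf w}_\mu$, where ${\bf w}_\mu := \sum_{(s,t)\in\mu}{\bf v}(\mathcal{I}_{T_i^\ast}(s,t))$ and $\mu = m_i^\ast(b)$, so linearity gives
\[ \sumb_\lambda^\ast(\widetilde{f}_i^\ast(b)) - \sumb_\lambda^\ast(b) = \sumb_\lambda^\ast({\bf w}_\mu). \]
The right-hand side is a constant independent of $b$, depending only on the pair $(\lambda,\mu)$, so the entire lemma reduces to a purely combinatorial evaluation of $\sumb_\lambda^\ast$ on the explicit displacement vector ${\bf w}_\mu$. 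This is the conceptual reduction I would carry out first, and it is exactly the mechanism already used (implicitly) in the type $A$ computation of Lemma \ref{lem:sumb eqn A}.

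Next I would record the elementary evaluations $\partb_{a,b}^\ast({\bf v}(c,d))$, using ${\bf v}(c,d) = {\bf e}_{c,d} - {\bf e}_{c-1,d}$ together with the explicit form of $\partb_{a,b}^\ast$ from Section \ref{subsec:bicrystal description}; each such number lies in $\{0,\pm 1\}$ (or is a small multiple thereof coming from the Cartan pairings $\langle h_t,\alpha_k\rangle$), and is nonzero only when $(c,d)$ sits adjacent to the support of $\partb_{a,b}^\ast$ in the tableau configuration. I would then expand
\[ \sumb_\lambda^\ast({\bf w}_\mu) = \sum_{(s,t)\in\lambda}\ \sum_{(u,v)\in\mu} T_i^\ast(s,t)\bigl({\bf v}(\mathcal{I}_{T_i^\ast}(u,v))\bigr) \]
and organize the double sum along the profiles of the two shapes. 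Because of the overlapping support of neighboring $\partb^\ast$ entries, the bulk of the contributions cancels in pairs, and only cells lying on the common boundary of $\lambda$ and $\mu$ survive. Matching these surviving terms against the strict-removable nodes $R_p$, the strict-addable nodes $A_q$, and the auxiliary cells $R_p', A_q'$ introduced before the lemma is what produces the count $2 - \rho - \alpha + \rho' + \alpha'$; the recursions of Lemma \ref{lem:tab eqn BD} are the tool I would use to rewrite the $\partb^\ast$ entries in terms of the box functions $\boxed{j}_s^{D}$, $\boxed{j}_s^{B}$ and make the cancellations transparent.

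The main obstacle is the nonuniformity of the staircase tableaux $T_{n-1}^\ast, T_n^\ast$ in type $D_n$ and $T_n^\ast$ in type $B_n$. Along the first column these tableaux alternate between the two fork entries $\partb_{s,n-1}^\ast$ and $\partb_{s,n}^\ast$, and in type $B_n$ every entry off the first column carries an extra factor $2$; this is precisely what forces the doubled coefficients $2 - 2\rho - 2\alpha + 2\rho' + 2\alpha'$ and the correction terms $\delta((\ell,1)\in\lambda) + \delta((\ell+1,1)\in\lambda)$ in the $i=n$ case. I would therefore split the argument according to whether $i \leq n-2$, where the shapes in $\Pi_i^\ast$ are single rows and the computation is essentially that of Lemma \ref{lem:sumb eqn A}, or $i \in \{n-1,n\}$, where the staircase shapes appear; in the latter case the contributions of the fork corner and of the doubled entries must be tracked separately, which is the laborious bookkeeping the paper defers. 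Finally, I would specialize to $\lambda = \mu$: here the boundary interaction collapses to force $\rho + \alpha - \rho' - \alpha' = 1$, recovering $\sumb_\mu^\ast(\widetilde{f}_i^\ast(b)) = \sumb_\mu^\ast(b) + 1$, which is the single value actually needed downstream to verify the crystal axioms in Proposition \ref{prop:polyhed star crystal}.
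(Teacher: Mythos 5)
Your opening reduction is sound and is exactly the mechanism behind the paper's proof of the type $A$ analogue (Lemma \ref{lem:sumb eqn A}): since $\sumb_\lambda^\ast$ is linear and $\widetilde{f}_i^\ast(b) = b + {\bf w}_\mu$ with ${\bf w}_\mu = \sum_{(s,t)\in\mu}{\bf v}(\mathcal{I}_{T_i^\ast}(s,t))$, $\mu = m_i^\ast(b)$, the difference is the constant $\sumb_\lambda^\ast({\bf w}_\mu)$. (Note that the paper offers no proof of this lemma to compare against: it is stated in the appendix explicitly ``without proof,'' with only the type $A$ computation as a model.) But as a proof your proposal has a genuine gap: the entire content of the lemma \emph{is} the evaluation of $\sumb_\lambda^\ast({\bf w}_\mu)$, and you never perform it. The two central claims --- that after expanding the double sum ``the bulk of the contributions cancels in pairs'' leaving only boundary cells, and that the survivors are counted exactly by $2 - \rho - \alpha + \rho' + \alpha'$ (with the doubling and $\delta$-corrections for $B_n$, $i=n$) --- are asserted, not derived; the second is essentially a restatement of the lemma. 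Worse, the pairwise-cancellation heuristic is false at the boundary of the configuration: already in type $A$, the conventions ${\bf e}_{0,\cdot} = 0$, $x_{0,\cdot} = 0$ break the cancellation in the first row and are what turn the naive value $+1$ into the correct $+2$ when $m_i^\ast(b) \geq 2$. Tracking exactly these non-cancelling boundary terms across the staircase shapes and the alternating fork entries $\partb^\ast_{s,n-1}/\partb^\ast_{s,n}$ is the ``laborious'' work, and it is absent from your sketch.

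A second, more specific gap: you never pin down what $\rho, \alpha, \rho', \alpha'$ count, and the lemma cannot be proved (or even correctly stated) without doing so. The only reading that works is that $R_p, A_q, R'_p, A'_q$ are the distinguished nodes attached to $\mu = m_i^\ast(b)$, and $\rho, \alpha, \rho', \alpha'$ count how many of them lie \emph{in} $\lambda$. If they depended on $\lambda$ alone, the right-hand side would be independent of $\mu$, which is impossible since different $\mu$ give different displacement vectors; and the crossed reading (nodes of $\lambda$ counted inside $\mu$) already fails in type $A$, giving $0$ instead of $+2$ when $\lambda \subsetneq \mu$. Your phrase ``common boundary of $\lambda$ and $\mu$'' gestures at the right picture but cannot distinguish the correct statement from an incorrect one. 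Two smaller points: for $\lambda = \mu$ in type $B_n$, $i=n$, your claimed identity $\rho+\alpha-\rho'-\alpha'=1$ is not enough; one must also check that exactly one of $(\ell,1), (\ell+1,1)$ lies in $\mu$ so that the $\delta$-terms contribute $1$. And your closing remark that only the $\lambda=\mu$ value is needed downstream is wrong: the formula for \emph{all} $\lambda$ is what the paper uses to prove the $BD$ analogue of Proposition \ref{prop:bicrystal inverse A}, since that argument compares $\sumb_\lambda^\ast(\widetilde{f}_i^\ast(b))$ with $\sumb_\mu^\ast(\widetilde{f}_i^\ast(b))$ over every $\lambda \in \Pi_i^\ast$.
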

Using Lemma \ref{lem:sumb eqn BD}, we can prove Proposition \ref{prop:bicrystal inverse A} for type $BD$. 
As a remark, we can easily observe that, when we choose $i \leq n-2$ for type $D_n$ or $i \leq n-1$ for type $B_n$, 
the proof of Lemma \ref{lem:sumb eqn BD} is parallel to that of Lemma \ref{lem:sumb eqn A}. 
This phenomenon also occurs when we prove the remaining lemmas. 
In addition, when $\mathfrak{g}$ is of type $D_n$, statements for $i = n-1$ are proved using the same argument as for $i = n$, 
by swapping the roles of $n-1$ and $n$. 
Thus, we focus on presenting statements for $i = n$ from now on. \vspace{1em}

We obtain the following lemma on values $\sumb_\lambda^\ast(\widetilde{f}_n(b))$ and $\sumb_{(s)}(\widetilde{f}_n^\ast(b))$. 
\begin{lem}[{cf. Lemma \ref{lem:fixed sumb A}}] \label{lem:fixed sumb D}
    Assume that $\mathfrak{g}$ is of type $D_n$ and take $b \in \mathcal{B}(\infty)$. 
    Let $m_n(b) = \eta_m$ for some $1 \leq m \leq n-1$ and $m_n^\ast(b) = \mu$ for some $\mu \in \Pi_n^\ast$.
    \begin{enumerate}
        \item The followings hold.
        \begin{enumerate}
            \item $\sumb_\lambda^\ast(\widetilde{f}_n(b)) = \sumb_\lambda^\ast(b) + 1$ if
            $m$ is odd and $\lambda$ is such that $(m, 1) \in \lambda$ and $(m, 2) \not \in \lambda$;
            \item $\sumb_\lambda^\ast(\widetilde{f}_n(b)) = \sumb_\lambda^\ast(b) - 1$ if
            $m$ is even and $\lambda$ is such that $(m, 2) \in \lambda$ and $(m+1, 1) \not \in \lambda$;
            \item $\sumb_\lambda^\ast(\widetilde{f}_n(b)) = \sumb_\lambda^\ast(b)$ otherwise.
        \end{enumerate}

        \item The followings hold.
        \begin{enumerate}
            \item $\sumb_{(s)}(\widetilde{f}_n^\ast(b)) = \sumb_{(s)}(b) + 1$ 
            if $s$ is odd and $(s, 1) \in \mu$ and $(s, 2) \not \in \mu$;
            \item $\sumb_{(s)}(\widetilde{f}_n^\ast(b)) = \sumb_{(s)}(b) - 1$ 
            if $s$ is even and $(s, 2) \in \mu$ and $(s+1, 1) \not \in \mu$;
            \item $\sumb_{(s)}(\widetilde{f}_n^\ast(b)) = \sumb_{(s)}(b)$ otherwise.
        \end{enumerate}
    \end{enumerate}
\end{lem}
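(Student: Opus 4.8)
The plan is to reduce the whole lemma to a linear-coefficient count, exploiting that $\sumb_\lambda^\ast$ and $\sumb_{(s)}$ are \emph{linear} functions on $\mathbb{Z}^\infty$. Since $\widetilde{f}_n(b)-b$ and $\widetilde{f}_n^\ast(b)-b$ are explicit integer displacement vectors, assembled from the ${\bf v}$'s in Definition \ref{def:crystal usual} and Definition \ref{def:crystal star}, each difference $\sumb^\ast_\lambda(\widetilde{f}_n(b))-\sumb^\ast_\lambda(b)$ (resp. $\sumb_{(s)}(\widetilde{f}_n^\ast(b))-\sumb_{(s)}(b)$) equals the value of the linear function on that displacement, i.e. a signed sum of coefficients. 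The case division in (1) and (2) merely records which of a handful of coefficients survive, and the hypotheses on membership of cells are forced by strictness of the partitions in $\Pi_n^\ast$.

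For statement (1) I would first note that the displacement telescopes. Since $\mathcal{I}_{T_n}(1,t)=(t,n)$ and $m_n(b)=\eta_m$,
\[ \widetilde{f}_n(b)-b \;=\; \sum_{t=1}^m {\bf v}(t,n) \;=\; \sum_{t=1}^m \big({\bf e}_{t,n}-{\bf e}_{t-1,n}\big) \;=\; {\bf e}_{m,n}, \]
using ${\bf e}_{0,n}=0$. Hence $\sumb^\ast_\lambda(\widetilde{f}_n(b))-\sumb^\ast_\lambda(b)$ is exactly the coefficient of $x_{m,n}$ in $\sumb^\ast_\lambda=\sum_{(s,t)\in\lambda}T_n^\ast(s,t)$. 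Using the explicit type-$D$ expressions $\partb^\ast_{s,n}=x_{s-1,n}-x_{s,n-2}+x_{s,n}$ and $\partb^\ast_{s+1,n-2}=x_{s,n-2}-x_{s,n-1}-x_{s,n}-x_{s+1,n-3}+x_{s+1,n-2}$, together with the fact that $\partb^\ast_{s,n-1}$ carries no $x_{\bullet,n}$, one sees that only three cells of $T_n^\ast$ have an entry involving $x_{m,n}$: the cell $(m,1)$ (coefficient $+1$, present only when $m$ is odd), the cell $(m+1,1)$ (coefficient $+1$, present only when $m$ is even), and the cell $(m,2)$ (coefficient $-1$). Summing these over the cells lying in $\lambda$ gives the net change, and the three listed cases drop out once one invokes strictness: for a strict $\lambda$, $(m+1,1)\in\lambda$ forces $\lambda_{m+1}\ge 1$, hence $\lambda_m\ge 2$, hence $(m,2)\in\lambda$. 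This rules out the spurious overlap and makes every remaining configuration give $0$, which is precisely why the hypotheses ``$(m,2)\notin\lambda$'' and ``$(m+1,1)\notin\lambda$'' appear.

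Statement (2) is the mirror computation. Here $\widetilde{f}_n^\ast(b)-b=\sum_{(a,c)\in\mu}{\bf v}(\mathcal{I}_{T_n^\ast}(a,c))$, while the target $\sumb_{(s)}=\sum_{u=s}^{n-1}\partb_{u,n}$ involves only the variables $x_{\bullet,n}$ and $x_{\bullet,n-2}$. I would first discard the invisible cells: a column-$1$ cell with $a$ even produces ${\bf v}(a,n-1)$, and a cell in a column $c\ge 3$ produces ${\bf v}(a+c-1,n-c)$ with $n-c\le n-3$, none of which touches $x_{\bullet,n}$ or $x_{\bullet,n-2}$. The surviving contributions come from odd column-$1$ cells (through $x_{\bullet,n}$) and from column-$2$ cells (through $x_{\bullet,n-2}$). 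Reading off the coefficients of $\sumb_{(s)}$ (namely $1$ at $x_{s,n}$, $2$ at $x_{p,n}$ for $s<p\le n-1$, and $-1$ at $x_{p,n-2}$ for $s<p\le n-1$) yields a net $+1$ from $(s,1)$ when $s$ is odd, a net $+1$ from $(s+1,1)$ when $s$ is even, and a net $-1$ from $(s,2)$; the identical strictness argument then collapses these into cases (a)--(c).

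I expect the genuine obstacle to be the bookkeeping at the folded end of the $D_n$ diagram: the fork joining nodes $n-1$ and $n$ to $n-2$ is exactly what produces the parity dependence and the alternating pattern of the first column of $T_n^\ast$, and one must apply the convention $x_{s,t}={\bf e}_{s,t}=0$ for $(s,t)\notin\mathcal{I}_n^D$ consistently when cells sit near the top or bottom of $\delta_{n-1}$. The delicate step is not the arithmetic but checking that strictness annihilates precisely the overlap configurations that would otherwise contribute a wrong $\pm 1$. Finally, I would remark that the case $i=n-1$ requires no separate work: it follows verbatim by interchanging the roles of the indices $n-1$ and $n$, since $T_{n-1}^\ast$ is obtained from $T_n^\ast$ by swapping $x_{s,n-1}\leftrightarrow x_{s,n}$.
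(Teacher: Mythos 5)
Your proposal is correct, and it is worth noting that the paper itself contains \emph{no} proof of Lemma \ref{lem:fixed sumb D}: it is one of the appendix lemmas that the author explicitly leaves unproved, remarking only that the type $BD$ verifications are laborious. So there is no argument in the paper to compare yours against; what you have written fills that gap. Your organization --- telescoping the displacement to get $\widetilde{f}_n(b)-b={\bf e}_{m,n}$, evaluating the linear functionals $\sumb_\lambda^\ast$ and $\sumb_{(s)}$ on explicit displacement vectors, and then letting strictness of the partitions kill the one configuration (namely $(m+1,1)\in\lambda$ or $(s+1,1)\in\mu$ without the corresponding column-$2$ cell) that would otherwise contribute a spurious $+1$ --- is the natural extension of the paper's own type-$A$ computations (Lemmas \ref{lem:sumb eqn A} and \ref{lem:fixed sumb A}), where exactly this kind of term-by-term bookkeeping is carried out; the displacement-vector formulation is, if anything, cleaner than tracking how each $\partb^\ast_{u,v}(b)$ changes. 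I checked the numerical core: in (1) the only entries of $T_n^\ast$ whose expansion involves $x_{m,n}$ sit at $(m,1)$ (only when $m$ is odd), $(m+1,1)$ (only when $m$ is even), and $(m,2)$, with coefficients $+1,+1,-1$; in (2) the coefficients of $\sumb_{(s)}$ are $1$ at $x_{s,n}$, $2$ at $x_{p,n}$ for $s<p\le n-1$, and $-1$ at $x_{p,n-2}$ for $s<p\le n-1$, and the cell-by-cell contributions are exactly as you list. Two boundary facts deserve an explicit sentence in a final write-up, since you only allude to them: the cells $(n,1)$ and $(n-1,2)$ lie outside $\delta_{n-1}$ and hence never belong to $\lambda$ or $\mu$ (this is what keeps the cases $m=n-1$ and $s=n-1$ consistent with the general formula), and the convention ${\bf e}_{s,t}=x_{s,t}=0$ for $(s,t)\notin\mathcal{I}_n^D$ removes the terms $x_{n,n-2}$ and $x_{n,n}$ from $\sumb_{(s)}$, so no column-$2$ cell can ever meet the $p=n$ boundary of the coefficient pattern.
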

\begin{lem}[{cf. Lemma \ref{lem:fixed sumb A}}] \label{lem:fixed sumb B}
    Assume that $\mathfrak{g}$ is of type $B_n$ and take $b \in \mathcal{B}(\infty)$. 
    Let $m_n(b) = \eta_m$ for some $1 \leq m \leq n$ and $m_n^\ast(b) = \mu$ for some $\mu \in \Pi_n^\ast$.

    \begin{enumerate}
        \item The followings hold.
        \begin{enumerate}
            \item $\sumb_\lambda^\ast(\widetilde{f}_n(b)) = \sumb_\lambda^\ast(b) + 1$
            if $(m, 1) \in \lambda$ and $(m, 2) \not \in \lambda$
            \item $\sumb_\lambda^\ast(\widetilde{f}_n(b)) = \sumb_\lambda^\ast(b) - 1$ 
            if $(m, 2) \in \lambda$ and $(m+1, 1) \not \in \lambda$
            \item $\sumb_\lambda^\ast(\widetilde{f}_n(b)) = \sumb_\lambda^\ast(b)$ otherwise.
        \end{enumerate}

        \item The followings hold.
        \begin{enumerate}
            \item $\sumb_{(s)}(\widetilde{f}_n^\ast(b)) = \sumb_{(s)}(b) + 1$ 
            if $(s, 1) \in \mu$ and $(s, 2) \not \in \mu$;
            \item $\sumb_{(s)}(\widetilde{f}_n^\ast(b)) = \sumb_{(s)}(b) - 1$ 
            if $(s, 2) \in \mu$ and $(s+1, 1) \not \in \mu$;
            \item $\sumb_{(s)}(\widetilde{f}_n^\ast(b)) = \sumb_{(s)}(b)$ otherwise.
        \end{enumerate}
    \end{enumerate}
\end{lem}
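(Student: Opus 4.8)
The plan is to exploit the linearity of $\sumb_\lambda^\ast$ and $\sumb_{(s)}$ together with the explicit form of the crystal operators. Since $\sumb_\lambda^\ast = \sum_{(s,t)\in\lambda}T_n^\ast(s,t)$ and $\sumb_{(s)} = \sum_{k=s}^n \partb_{k,n}$ are linear functions on $\mathbb{Q}^\infty$, each claimed difference equals the value of the relevant linear function on the \emph{difference vector} $\widetilde{f}_n(b)-b$ (for part (1)) or $\widetilde{f}_n^\ast(b)-b$ (for part (2)), and this value is independent of $b$. Thus the whole lemma reduces to computing these two difference vectors and then reading off coefficients.

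First I would compute the difference vectors by the telescoping identity for ${\bf v}$. For part (1), since $m_n(b)=\eta_m$ and $\mathcal{I}_{T_n}(1,t)=(t,n)$, the note below Definition \ref{def:crystal usual} gives $\widetilde{f}_n(b)-b=\sum_{(s,t)\in\eta_m}{\bf v}(\mathcal{I}_{T_n}(s,t))={\bf e}_{m,n}$, so the target is simply the coefficient of $x_{m,n}$ in $\sumb_\lambda^\ast$. For part (2), I would run the same telescoping column by column over the strict partition $\mu$: the first column of $T_n^\ast$ (filled with $\partb^\ast_{s,n}$) contributes ${\bf e}_{\ell(\mu),n}$, while each column $t\ge 2$ (filled with $2\partb^\ast_{\bullet,\,n+1-t}$, cf.\ the tableau in the type $B_3$ example) contributes a telescoped pair of basis vectors with row index $n+1-t$.

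The key computation is coefficient extraction, and here only two diagonals survive. In $\partb^\ast_{a,b}$ the variable $x_{\ast,n}$ occurs only through $x_{a-1,b},x_{a,b}$ when $b=n$, and through $\langle h_b,\alpha_n\rangle\,x_{a-1,n}$, which is nonzero only for $b=n-1$ where $\langle h_{n-1},\alpha_n\rangle=-1$; symmetrically $\partb_{k,n}=x_{k,n}-2x_{k+1,n-1}+x_{k+1,n}$ is supported only in rows $n-1,n$. Using the double-bond value $\langle h_n,\alpha_{n-1}\rangle=-2$ (equivalently the factor $2$ in the second column of $T_n^\ast$), the coefficient of $x_{m,n}$ in $\sumb_\lambda^\ast$ works out to $[(m,1)\in\lambda]+[(m+1,1)\in\lambda]-2[(m,2)\in\lambda]$, and the analogous telescoped evaluation of $\sumb_{(s)}$ on $\widetilde{f}_n^\ast(b)-b$ gives $[(s,1)\in\mu]+[(s+1,1)\in\mu]-2[(s,2)\in\mu]$. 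Finally, invoking strictness of $\lambda$ (resp.\ $\mu$)---which forces the implications $(m+1,1)\in\lambda\Rightarrow(m,2)\in\lambda\Rightarrow(m,1)\in\lambda$---collapses the admissible indicator profiles to $(0,0,0),(1,0,0),(1,1,0),(1,1,1)$, whose values $0,+1,-1,0$ are exactly cases (c),(a),(b),(c) of the lemma.

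The hard part will be the careful bookkeeping of the factor $2$ and of the staircase boundary, not the idea itself. In particular one must check that the profile $(1,1,1)$ genuinely cancels to $0$ (this is where the short-root factor $2$ and the $x_{m,n}$-contribution of the adjacent cell conspire), that cells such as $(m,2)$ or $(m+1,1)$ lying outside $\delta_n$ simply give vanishing indicators, and that the telescoping in part (2) produces no spurious boundary terms at the edge of $\delta_n$ (the terms $C_{0,n}$ and $C_{1,n-1}$ vanish precisely because $s\ge 1$). I would also remark that the argument is parallel to Lemma \ref{lem:sumb eqn BD} and that parts (1) and (2) are dual under interchanging the usual and star structures, so once part (1) is settled, part (2) follows verbatim with $m,\lambda$ replaced by $s,\mu$.
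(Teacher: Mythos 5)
Your argument is correct, and there is an important contextual point: the paper states this lemma (like all the appendix lemmas for types $B$ and $D$) explicitly \emph{without} proof, so there is no official argument to compare against; the closest model is the paper's treatment of the type-$A$ analogues (Lemma \ref{lem:sumb eqn A} and Lemma \ref{lem:fixed sumb A}), which proceed by ``observing'' how the individual values $\partb_{u,v}(b)$, $\partb^\ast_{u,v}(b)$ change under the operator --- which is exactly your linearity-plus-difference-vector computation in different words. Your key formulas check out. For part (1), $\widetilde f_n(b)-b={\bf e}_{m,n}$, and the coefficient of $x_{m,n}$ in $\sumb^\ast_\lambda$ is $[(m,1)\in\lambda]+[(m+1,1)\in\lambda]-2[(m,2)\in\lambda]$, the $-2$ coming from the entry $2\partb^\ast_{m+1,n-1}$ in cell $(m,2)$ via $\langle h_{n-1},\alpha_n\rangle=-1$. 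For part (2), writing $\ell=\ell(\mu)$ and $h_t$ for the length of column $t$ of $\mu$, the telescoped difference vector is ${\bf e}_{\ell,n}+\sum_{t\ge 2}({\bf e}_{h_t+t-1,\,n+1-t}-{\bf e}_{t-1,\,n+1-t})$; since $\sumb_{(s)}=x_{s,n}+2\sum_{j>s}x_{j,n}-2\sum_{j>s}x_{j,n-1}$ is supported on the columns $n$ and $n-1$, only the $t=2$ term survives, and the pairing gives $[\ell=s]+2[\ell>s]-2[h_2\ge s]=[(s,1)\in\mu]+[(s+1,1)\in\mu]-2[(s,2)\in\mu]$, with the boundary term ${\bf e}_{1,n-1}$ pairing to zero because $s\ge 1$. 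Strictness then collapses the indicator profiles to $(0,0,0),(1,0,0),(1,1,0),(1,1,1)$ with values $0,+1,-1,0$, which are precisely cases (c),(a),(b),(c).

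Two remarks. First, you (correctly) worked from the $B_3$ example: Definition \ref{def:partition set}(3) as printed reads $T_n^\ast(s,t)=2\partb^\ast_{s+t-1,\,n-t}$ for $t\ge 2$, but consistency with the displayed $T_3^\ast$ and with the set $-\Xi^{(3)}$ in Example \ref{ex:-Xi}(3) forces $2\partb^\ast_{s+t-1,\,n+1-t}$; your computation uses the correct indexing, and with the misprinted one the lemma would be false. Second, your closing claim that part (2) ``follows verbatim'' from part (1) by duality overstates the symmetry --- in (2) the difference vector is not a single basis vector, and one genuinely needs the column-by-column telescoping plus the observation that columns $t\ge 3$ pair to zero --- but since you carry out that computation explicitly, nothing is actually missing from the proof.
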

Using Lemma \ref{lem:fixed sumb D} and Lemma \ref{lem:fixed sumb B}, we can prove Proposition \ref{prop:jump nonzero A} for type $BD$. \vspace{1em}

Finally, we calculate values $\sumb_\lambda^\ast(\widetilde{f}_i(b))$ and $\sumb_{(t)}(\widetilde{f}_j^\ast(b))$ for distinct $i, j \in I$. 
By the same reason as Lemma \ref{lem:comm sumb A}, we verify that
\begin{itemize}
    \item when $i > j$ (except $(i, j) = (n, n-1)$ for type $D_n$),
    \[ \sumb_\lambda^\ast(\widetilde{f}_i(b)) = \sumb_\lambda^\ast(b), \quad \sumb_{(t)}(\widetilde{f}_j^\ast(b)) = \sumb_{(t)}(b) \]
    for $\lambda \in \Pi_j^\ast$ and $1 \leq t \leq n-1$ (resp. $1 \leq t \leq n$) for type $D_n$ (resp. $B_n$);
    \item when $i < j$ in $\{ 1, 2, \dots, n-2 \}$ for type $D_n$ and in $\{ 1, \dots, n-1 \}$ for type $B_n$, 
    \begin{eqnarray*}
        \sumb_\lambda^\ast(\widetilde{f}_i(b)) &=& \begin{cases}
            \sumb_\lambda^\ast(b) - 1 & \mbox{if } m_i(b)=\eta_{j-i} \mbox{ and } \lambda = \eta_{j-i}, \\
            \sumb_\lambda^\ast(b) + 1 & \mbox{if } m_i(b)=\eta_{j-i+1} \mbox{ and } \lambda = \eta_{j-i+1}, \\
            \sumb_\lambda^\ast(b) & \mbox{otherwise,}
        \end{cases} \\
        \sumb_{(t)}(\widetilde{f}_j^\ast(b)) &=& \begin{cases}
            \sumb_{(t)}(b) - 1 & \mbox{if } m_j^\ast(b) = \eta_{j-i} \mbox{ and } t=j-i, \\
            \sumb_{(t)}(b) + 1 & \mbox{if } m_j^\ast(b) = \eta_{j-i+1} \mbox{ and } t=j-i+1, \\
            \sumb_{(t)}(b) & \mbox{otherwise}
        \end{cases}
    \end{eqnarray*}
    for $\lambda \in \Pi_j^\ast$ and $1 \leq t \leq n-1$.
\end{itemize}
We obtain the following lemma to address the remaining cases.
\begin{lem}[{cf. Lemma \ref{lem:comm sumb A}}] \label{lem:comm sumb D}
    Assume that $\mathfrak{g}$ is of type $D_n$ and take $b \in \mathcal{B}(\infty)$. 
    For $i \leq n-2$, let $m_i(b) = \eta_m$ for some $1 \leq m \leq n-1$ and $m_n^\ast(b) = \mu$ for some $\mu \in \Pi_n^\ast$. 
    
    (1) When $i \leq n-2$, we have
    \[ \sumb_\lambda^\ast(\widetilde{f}_i(b)) = \begin{cases}
        \sumb_\lambda^\ast(b) - 1 & \mbox{if } [m=n-i-1+k \mbox{ for some $0 < k < i$ and } \\
            & \phantom{if } (k, n-i+1) \in \lambda, (k+1, n-i-1) \in \lambda, (k+1, n-i) \not \in \lambda] \\
            & \phantom{if } \mbox{or } [m = n-1-i, (1, m) \in \lambda, (1, m+1) \not \in \lambda], \\
        \sumb_\lambda^\ast(b) + 1 & \mbox{if } [m=n-i-1+k \mbox{ for some $0 < k < i$ and } \\
            & \phantom{if } (k, n-i) \in \lambda, (k, n-i+1) \not \in \lambda, (k+1, n-i-1) \not \in \lambda] \\
            & \phantom{if } \mbox{or } [m = n-1, (i, n-i) \in \lambda, (i+1, n-i-1) \not \in \lambda], \\
        \sumb_\lambda^\ast(b) & \mbox{otherwise}
    \end{cases} \]
    for $\lambda \in \Pi_n^\ast$ and
    \[ \sumb_{(t)}(\widetilde{f}_n^\ast(b)) = \begin{cases}
        \sumb_{(t)}(b) - 1 & \mbox{if } [t = n-i-1+k \mbox{ for some $0 < k < i$ and } \\
            & \phantom{if } (k, n-i+1) \in \mu, (k+1, n-i-1) \in \mu, (k+1, n-i) \not \in \mu] \\
            & \phantom{if } \mbox{or } [t = n-i-1, (1, t) \in \mu, (1, t+1) \not \in \mu], \\
        \sumb_{(t)}(b) + 1 & \mbox{if } [t=n-i-1+k \mbox{ for some $0 < k < i$ and } \\
        & \phantom{if } (k, n-i) \in \mu, (k, n-i+1) \not \in \mu, (k+1, n-i-1) \not \in \mu] \\
        & \phantom{if } \mbox{or } [t = n-1, (i, n-i) \in \mu, (i+1, n-i-1) \not \in \mu], \\
        \sumb_{(t)}(b) & \mbox{otherwise}
    \end{cases} \]
    for $1 \leq t \leq n-1$.

    (2) When $i = n-1$, we have
    \[ \sumb_\lambda^\ast(\widetilde{f}_i(b)) = \begin{cases}
        \sumb_\lambda^\ast(b) - 1 & \mbox{if $m$ is odd and } (m, 2) \in \lambda, (m+1, 1) \not \in \lambda, \\
        \sumb_\lambda^\ast(b) + 1 & \mbox{if $m$ is even and } (m, 1) \in \lambda, (m, 2) \not \in \lambda, \\
        \sumb_\lambda^\ast(b) & \mbox{otherwise,}
    \end{cases} \]
    for $\lambda \in \Pi_n^\ast$ and
    \[ \sumb_{(t)}(\widetilde{f}_n^\ast(b)) = \begin{cases}
        \sumb_{(t)}(b) - 1 & \mbox{if $t$ is odd and } (t, 2) \in \mu, (t+1, 1) \not \in \mu, \\
        \sumb_{(t)}(b) + 1 & \mbox{if $t$ is even and } (t, 1) \in \mu, (t, 2) \not \in \mu, \\
        \sumb_{(t)}(b) & \mbox{otherwise}
    \end{cases} \]
    for $1 \leq t \leq n-1$.
\end{lem}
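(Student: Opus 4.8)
The plan is to reduce both identities to the evaluation of one fixed linear functional at one explicit difference vector, and then to a purely combinatorial bookkeeping of coefficients that is controlled by the shape of $\lambda$ (resp.\ $\mu$). First I would record the two difference vectors. By the telescoping identity noted just after Definition \ref{def:crystal usual}, one has $\widetilde{f}_i(b) - b = {\bf e}_{m,i}$ whenever $m_i(b) = \eta_m$, while Definition \ref{def:crystal star} gives $\widetilde{f}_n^\ast(b) - b = \sum_{(s,t)\in\mu}{\bf v}(\mathcal{I}_{T_n^\ast}(s,t))$ with $\mu = m_n^\ast(b)$. Since every $\sumb_\lambda^\ast$ and every $\sumb_{(t)}$ is a linear functional, this turns the two claims into $\sumb_\lambda^\ast(\widetilde{f}_i(b)) - \sumb_\lambda^\ast(b) = [\,\text{coefficient of } x_{m,i} \text{ in } \sumb_\lambda^\ast\,]$ and, dually, $\sumb_{(t)}(\widetilde{f}_n^\ast(b)) - \sumb_{(t)}(b) = \sum_{(s,t')\in\mu}\sumb_{(t)}({\bf v}(\mathcal{I}_{T_n^\ast}(s,t')))$. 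Thus everything becomes a computation of coefficients of single variables in explicit linear functions.

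For the first identity in the generic range $i \le n-3$ I would expand $\sumb_\lambda^\ast = \sum_{(s,t)\in\lambda}\partb^\ast_{\mathcal{I}_{T_n^\ast}(s,t)}$ using the explicit form of $\partb_{s,t}^\ast$ together with the index map $\mathcal{I}_{T_n^\ast}(s,t) = (s+t-1,\,n-t)$ valid for $t\ge 2$. A direct inspection of the four monomials of $\partb^\ast$ shows that, writing $m = n-i-1+k$, the variable $x_{m,i}$ occurs only in the entries sitting at the partition cells $(k,n-i)$, $(k+1,n-i)$, $(k+1,n-i-1)$, $(k,n-i+1)$, with coefficients $+1,+1,-1,-1$ respectively. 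Summing these four indicators and collapsing the two row-pairs by the left-justification of Young diagrams, the difference of indicators in each row simplifies; then strictness of $\lambda$ forces, as soon as one row-pair is active, the membership status of the neighbouring cell in the other row. This is exactly what converts the raw indicator count into the stated triple conditions: the coefficient is $+1$ precisely when $(k,n-i)\in\lambda$, $(k,n-i+1)\notin\lambda$, $(k+1,n-i-1)\notin\lambda$, it is $-1$ precisely when $(k,n-i+1)\in\lambda$, $(k+1,n-i-1)\in\lambda$, $(k+1,n-i)\notin\lambda$, and it vanishes otherwise.

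Next I would treat the boundary and the branch point. When $k=0$ or $k=i$, two of the four cells fall outside the staircase $\delta_{n-1}$ (a cell $(s,t)$ is admissible iff $t\le n-s$), so only one row-pair survives; strictness again removes the spurious sign, and this yields exactly the two endpoint conditions $[\,m=n-1-i,\ (1,m)\in\lambda,\ (1,m+1)\notin\lambda\,]$ and $[\,m=n-1,\ (i,n-i)\in\lambda,\ (i+1,n-i-1)\notin\lambda\,]$. The genuinely new feature is $i=n-2$: here the Dynkin branching makes $\partb_{m,n-1}^\ast$, $\partb_{m,n}^\ast$ and $\partb_{m+1,n-3}^\ast$ also contain $x_{m,n-2}$, and exactly one of $\partb_{m,n-1}^\ast,\partb_{m,n}^\ast$ actually occurs in $T_n^\ast$ (in the column $t=1$, selected by the parity of $m$). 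I would check, separately for $m$ even and $m$ odd, that these extra terms recombine into the same boundary conditions, so that the uniform formula of part~(1) still holds. The second identity, with $\sumb_{(t)} = \sum_{s=t}^{n-1}\partb_{s,i}$ evaluated at $\sum_{(s,t')\in\mu}{\bf v}(\mathcal{I}_{T_n^\ast}(s,t'))$, follows by the identical bookkeeping with the roles of $\lambda$ and $\mu$ interchanged; part~(2), where $i=n-1$ and $j=n$ are the two spin nodes, is the shortest instance, involving only the parity-selected column $t=1$ entries and $\partb_{m+1,n-2}^\ast$.

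The main obstacle I anticipate is the $i=n-2$ analysis together with the parity-dependent column $t=1$ contributions: one must verify that the several extra terms, whose presence depends on the parity of $m$ and on proximity to the staircase boundary, always reassemble into the clean conditions stated, and that no configuration permitted by the ambient Young diagram but forbidden by strictness is overlooked. Organizing the case analysis so that it is manifestly exhaustive, rather than collapsing into a thicket of sign computations, is where the real care lies, and it is precisely this branching and parity structure that makes the argument substantially longer than the type-$A$ template of Lemma \ref{lem:comm sumb A}.
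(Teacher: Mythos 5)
Your proposal is correct, and it is essentially the argument the paper intends: the paper states this lemma \emph{without proof} (the appendix lemmas are explicitly omitted as laborious), and its type-$A$ template (Lemma \ref{lem:comm sumb A}) rests on exactly the direct computation you set up, namely $\widetilde{f}_i(b)-b={\bf e}_{m,i}$ by the telescoping identity and $\widetilde{f}_n^\ast(b)-b=\sum_{(s,t)\in\mu}{\bf v}(\mathcal{I}_{T_n^\ast}(s,t))$, followed by coefficient bookkeeping. Your central claims all check out: with $m=n-i-1+k$ the variable $x_{m,i}$ meets $T_n^\ast$ precisely at the cells $(k,n-i),(k+1,n-i)$ with sign $+1$ and $(k,n-i+1),(k+1,n-i-1)$ with sign $-1$; left-justification and strictness turn this indicator sum into exactly the stated triple conditions; the staircase truncation at $k=0$ and $k=i$ gives the two endpoint conditions; for $i=n-2$ the spin entries $\partb_{m,n-1}^\ast,\partb_{m,n}^\ast$ (exactly one of which occupies the column-$1$ cell, by parity) restore the same four-cell pattern; and the dual evaluation of $\sumb_{(t)}$ at $\sum_{(s,t')\in\mu}{\bf v}(\mathcal{I}_{T_n^\ast}(s,t'))$ lands on the same four cells with the same signs, so the ``roles interchanged'' step is legitimate.
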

\begin{lem}[{cf. Lemma \ref{lem:comm sumb A}}] \label{lem:comm sumb B}
    Assume that $\mathfrak{g}$ is of type $B_n$ and take $b \in \mathcal{B}(\infty)$. 
    For $i \leq n-1$, let $m_i(b) = \eta_m$ for some $1 \leq m \leq n$ and $m_n^\ast(b) = \mu$ for some $\mu \in \Pi_n^\ast$. 
    Then we have
    \[ \sumb_\lambda^\ast(\widetilde{f}_i(b)) = \begin{cases}
        \sumb_\lambda^\ast(b) - 2 & \mbox{if } [m=n-i+k \mbox{ for some $0 < k < i$ and } \\
            & \phantom{if } (k, n-i+2) \in \lambda, (k+1, n-i) \in \lambda, (k+1, n-i+1) \not \in \lambda] \\
            & \phantom{if } \mbox{or } [m = n-i, (1, m) \in \lambda, (1, m+1) \not \in \lambda], \\
        \sumb_\lambda^\ast(b) + 2 & \mbox{if } [m=n-i+k \mbox{ for some $0 < k < i$ and } \\
            & \phantom{if } (k, n-i+1) \in \lambda, (k, n-i+2) \not \in \lambda, (k+1, n-i) \not \in \lambda] \\
            & \phantom{if } \mbox{or } [m = n, (i, n-i+1) \in \lambda, (i+1, n-i) \not \in \lambda], \\
        \sumb_\lambda^\ast(b) & \mbox{otherwise}
    \end{cases} \]
    for $\lambda \in \Pi_n^\ast$ and
    \[ \sumb_{(t)}(\widetilde{f}_n^\ast(b)) = \begin{cases}
        \sumb_{(t)}(b) - 1 & \mbox{if } [t = n-i+k \mbox{ for some $0 < k < i$ and } \\
            & \phantom{if } (k, n-i+2) \in \mu, (k+1, n-i) \in \mu, (k+1, n-i+1) \not \in \mu] \\
            & \phantom{if } \mbox{or } [t = n-i, (1, t) \in \mu, (1, t+1) \not \in \mu], \\
        \sumb_{(t)}(b) + 1 & \mbox{if } [t=n-i+k \mbox{ for some $0 < k < i$ and } \\
        & \phantom{if } (k, n-i+1) \in \mu, (k, n-i+2) \not \in \mu, (k+1, n-i) \not \in \mu] \\
        & \phantom{if } \mbox{or } [t = n, (i, n-i+1) \in \mu, (i+1, n-i) \not \in \mu], \\
        \sumb_{(t)}(b) & \mbox{otherwise}
    \end{cases} \]
    for $1 \leq t \leq n$.
\end{lem}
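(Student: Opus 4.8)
The plan is to mirror the type-$A$ computation behind Lemma \ref{lem:comm sumb A}, using that every quantity in sight is the value of a fixed linear function on a displacement vector that does not depend on $b$. Since $\sumb_\lambda^\ast$ and $\sumb_{(t)}$ are linear, I would first rewrite the two differences as
\[ \sumb_\lambda^\ast(\widetilde{f}_i(b)) - \sumb_\lambda^\ast(b) = \sumb_\lambda^\ast\big(\widetilde{f}_i(b) - b\big), \qquad \sumb_{(t)}(\widetilde{f}_n^\ast(b)) - \sumb_{(t)}(b) = \sumb_{(t)}\big(\widetilde{f}_n^\ast(b) - b\big). \]
By the note below Definition \ref{def:crystal usual} together with $m_i(b) = \eta_m$, the telescoping of the vectors ${\bf v}$ along the horizontal tableau $T_i$ gives $\widetilde{f}_i(b) - b = {\bf e}_{m,i}$; by Definition \ref{def:crystal star} and $m_n^\ast(b) = \mu$, we have $\widetilde{f}_n^\ast(b) - b = \sum_{(s,t) \in \mu} {\bf v}(\mathcal{I}_{T_n^\ast}(s,t))$. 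Both differences are thus finite sums of coefficients that can be read directly off the tableau $T_n^\ast$ of shape $\delta_n$ (Definition \ref{def:partition set}) and the explicit expansion of $\partb_{s,t}^\ast$.

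For the first identity I would expand $\sumb_\lambda^\ast({\bf e}_{m,i}) = \sum_{(s,t) \in \lambda} T_n^\ast(s,t)({\bf e}_{m,i})$, where each summand is the coefficient of $x_{m,i}$ in the entry $2\partb^\ast_{s+t-1,\,n-t}$ (for $t \geq 2$) or $\partb^\ast_{s,n}$ (for $t = 1$). Reading $\partb_{s,t}^\ast$ off its definition and using the type-$B_n$ Cartan integers — whose only nonzero values are $\langle h_t, \alpha_t \rangle = 2$, $\langle h_t, \alpha_{t \pm 1}\rangle = -1$ on the long part, and $\langle h_n, \alpha_{n-1}\rangle = -2$ — the variable $x_{m,i}$ can occur in the entry at $(s,t)$ only when $t \in \{n-i-1, n-i, n-i+1\}$ and $s+t-1 \in \{m, m+1\}$. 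Thus only the cells of $\lambda$ in a narrow band of columns around $n-i$ contribute, and I expect the contributions of horizontally and vertically adjacent cells of $\lambda$ inside this band to cancel in pairs. What survives is exactly the net coefficient at the turning cell near column $n-i+1$ together with its immediate neighbours, which is precisely the data encoded by the indicator conditions $(k, n-i+2) \in \lambda$, $(k+1, n-i) \in \lambda$, $(k+1, n-i+1) \notin \lambda$, and so on. The computation of $\sumb_{(t)}(\widetilde{f}_n^\ast(b))$ is organized identically, now expanding $\sumb_{(t)}$ against $\sum_{(s,t')\in\mu}{\bf v}(\mathcal{I}_{T_n^\ast}(s,t'))$ and recording the surviving corner terms of $\mu$.

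The main obstacle will be the boundary behaviour forced by the short root $\alpha_n$. The doubled entries $2\partb^\ast$ in the columns $t \geq 2$ of $T_n^\ast$, the undoubled first column $\partb^\ast_{s,n}$, and the asymmetric pairing $\langle h_n, \alpha_{n-1}\rangle = -2$ together break the uniform pairwise cancellation available in type $A$; this is exactly why a factor of $2$ appears in the $\sumb_\lambda^\ast$ formula but not in the $\sumb_{(t)}$ one. I would treat separately the extremal values $m = n-i$ and $m = n$ (respectively $t = n-i$ and $t = n$), where a corner cell of the staircase $\delta_n$ has no same-type neighbour to cancel against. In those cases the minimality and maximality defining $m_i(b) = \eta_m$ and $m_n^\ast(b) = \mu$ force the relevant $\partb$ or $\partb^\ast$ value to have a definite sign, so that the lone surviving coefficient equals the claimed $\pm 1$ or $\pm 2$. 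Assembling the band computation with these corner cases, exactly as in the reduction recorded before the statement of the lemma, completes the verification.
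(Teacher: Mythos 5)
You should know at the outset that the paper contains no proof of this lemma: it is stated in the Appendix explicitly without proof, the author noting only that the type-$BD$ verifications are laborious analogues of the type-$A$ arguments. The closest model in the paper is the proof of Lemma \ref{lem:sumb eqn A} and the discussion surrounding Lemma \ref{lem:fixed sumb A} and Lemma \ref{lem:comm sumb A}, which proceed exactly as you propose: identify the displacement vector added by the crystal operator, then record how each $\partb_{u,v}$ or $\partb^\ast_{u,v}$ changes. Your core reduction is correct and is the same method: since $m_i(b)=\eta_m$ gives $\widetilde{f}_i(b)-b={\bf e}_{m,i}$ (by the telescoping noted after Definition \ref{def:crystal usual}) and $m_n^\ast(b)=\mu$ gives $\widetilde{f}_n^\ast(b)-b=\sum_{(s,t)\in\mu}{\bf v}(\mathcal{I}_{T_n^\ast}(s,t))$, both differences in the lemma are values of fixed linear functions at $b$-independent vectors, and the whole content is coefficient bookkeeping on the staircase $\delta_n$, with adjacent cells of $\lambda$ (resp.\ $\mu$) cancelling and only corner cells surviving. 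Your explanation of the $\pm2$ versus $\pm1$ asymmetry (the doubled entries $2\partb^\ast$ in $T_n^\ast$ and $\langle h_n,\alpha_{n-1}\rangle=-2$) is also correct.

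Two points need repair, however. First, your treatment of the extremal cases rests on a misconception: you claim that for $m=n-i$, $m=n$ (resp.\ $t=n-i$, $t=n$) the minimality and maximality defining $m_i(b)$ and $m_n^\ast(b)$ ``force the relevant $\partb$ or $\partb^\ast$ value to have a definite sign,'' and that this pins down the surviving coefficient. But the hypotheses $m_i(b)=\eta_m$ and $m_n^\ast(b)=\mu$ enter the proof only through the identification of the displacement vectors; after that, every correction term is a $b$-independent integer (for instance, in type $B_3$ the coefficient of $x_{3,2}$ in $\sumb^\ast_{(3,2)}$ equals $+2$ for every $b$, matching the lemma's case $m=n$, $(i,n-i+1)\in\lambda$, $(i+1,n-i)\notin\lambda$). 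An appeal at the corners to the sign of $\partb^\ast(b)$ is both circular and unnecessary: the corner coefficients must simply be computed, and they come out to the stated constants. Second, your band $t\in\{n-i-1,n-i,n-i+1\}$ follows the formula $T_n^\ast(s,t)=2\partb^\ast_{s+t-1,n-t}$ of Definition \ref{def:partition set}(3) literally, but that formula is inconsistent with the paper's own examples (the $B_3$ tableau after Definition \ref{def:partition set} and Example \ref{ex:-Xi}(3)) and is meaningless at $t=n$, where it would produce $\partb^\ast_{\cdot,0}$; the indexing consistent with the examples is $T_n^\ast(s,t)=2\partb^\ast_{s+t-1,n-t+1}$ for $t\geq 2$, which yields the band $t\in\{n-i,n-i+1,n-i+2\}$ --- precisely the columns $n-i$, $n-i+1$, $n-i+2$ appearing in the conditions of the lemma. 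As written, your computation would land on conditions shifted by one column from the statement you are proving. With the corrected indexing, and with the corner cases done by direct computation rather than sign-forcing, your plan does yield the lemma.
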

Using Lemma \ref{lem:comm sumb D} and Lemma \ref{lem:comm sumb B}, we can prove Proposition \ref{prop:comm A} for type $BD$.
Note that Lemma \ref{lem:comm sumb D} does still not cover the entire case, 
but we deduce these remaining cases by swapping the roles of $n-1$ and $n$.

\bibliographystyle{plain}
\bibliography{poly_extended.bib}

\end{document}